\UseAllTwocells \xyoption{frame} \CompileMatrices
\newbox\mybox
\def\overtag#1#2#3{\setbox\mybox\hbox{$#1$}\hbox to
  0pt{\vbox to 0pt{\vglue-#3\vglue-\ht\mybox\hbox to \wd\mybox
      {\hss$\ss#2$\hss}\vss}\hss}\box\mybox}
\def\undertag#1#2#3{\setbox\mybox\hbox{$#1$}\hbox to 0pt{\vbox to
    0pt{\vglue#3\vglue\ht\mybox\hbox to \wd\mybox
      {\hss$\ss#2$\hss}\vss}\hss}\box\mybox}
\def\lefttag#1#2#3{\hbox to 0pt{\vbox to 0pt{\vss\hbox to
      0pt{\hss$\ss#2$\hskip#3}\vss}}#1}
\def\righttag#1#2#3{\hbox to 0pt{\vbox to 0pt{\vss\hbox to
      0pt{\hskip#3$\ss#2$\hss}\vss}}#1}
\let\ss\scriptstyle
\def\Dot{\lower.2pc\hbox to 2pt{\hss$\bullet$\hss}}
\def\Circ{\lower.2pc\hbox to 2pt{\hss$\circ$\hss}}
\def\Vdots{\raise5pt\hbox{$\vdots$}}
\newcommand\lineto{\ar@{-}}
\newcommand\dashto{\ar@{--}}
\newcommand\dotto{\ar@{.}}
\newtheorem{prop}{Proposition}[section]
\newtheorem{lem}[prop]{Lemma}
\newtheorem{cor}[prop]{Corollary}
\newtheorem{thm}[prop]{Theorem}
\newtheorem{rmk}[prop]{Remark}
\newtheorem{example}{Example}
\newtheorem{construction}[prop]{Construction}
\newtheorem{defn}[prop]{Definition}
\newcommand{\noprint}[1]{}
\renewcommand{\tilde}{\widetilde}
\newcommand{\Ext}{\mbox{Ext}}
\newcommand{\XX}{{\mathfrak X}}
\renewcommand{\SS}{{\mathfrak S}}
\newcommand{\UU}{{\mathfrak U}}
\newcommand{\zz}{{\mathbb Z}}
\newcommand{\hh}{{\mathbb H}}
\newcommand{\T}{{\mathbb T}}
\newcommand{\qq}{{\mathbb Q}}
\newcommand{\pp}{{\mathbb P}}
\newcommand{\cc}{{\mathbb C}}
\newcommand{\rr}{{\mathbb R}}
\newcommand{\ff}{{\mathbb F}}
\newcommand{\sD}{{\mathcal D}}
\newcommand{\sE}{{\mathcal E}}
\newcommand{\sS}{{\mathcal S}}
\newcommand{\sO}{{\mathcal O}}
\newcommand{\sX}{{\mathcal X}}
\newcommand{\sY}{{\mathcal Y}}
\newcommand{\sV}{{\mathcal V}}
\newcommand{\sW}{{\mathcal W}}
\newcommand{\bbV}{{\overline{\overline{V}}}}
\newcommand{\bbW}{{\overline{\overline{W}}}}
\newcommand{\bV}{{\overline{V}}}
\newcommand{\bW}{{\overline{W}}}
\newcommand{\bbsV}{{\overline{\overline{\mathcal{V}}}}}
\DeclareMathOperator{\id}{id}
\DeclareMathOperator{\Aut}{Aut}
\DeclareMathOperator{\Pic}{Pic}
\DeclareMathOperator{\lci}{lci}
\DeclareMathOperator{\sing}{sing}
\DeclareMathOperator{\toric}{toric}
\DeclareMathOperator{\Star}{Star}
\newcommand{\rk}{\mathop{\rm rk}}
\newcommand{\tr}{\mathop{\rm tr}\nolimits}
\renewcommand{\top}{\mathop{\rm top}}
\renewcommand{\div}{\mathop{\rm div}}
\newcommand{\red}{\mathop{\rm red}\nolimits}
\newcommand{\proj}{\mathop{\rm Proj}\nolimits}
\newcommand{\tor}{\mathop{\rm tor}\nolimits}
\numberwithin{equation}{subsection}
\newcommand {\mat}      [1] {\left(\begin{array}{#1}}
\newcommand {\rix}          {\end{array}\right)}
\title[Equivariant smoothing of cusp singularities]{Equivariant smoothing of cusp singularities}
\author{Yunfeng Jiang}
\address{Department of Mathematics\\ University of Kansas\\ 405 Snow Hall 1460 Jayhawk Blvd\\Lawrence KS 66045 USA} 
\email{y.jiang@ku.edu}
\begin{document}
\sloppy \maketitle
\begin{abstract}
We generalize Looijenga's conjecture for smoothing surface cusp singularities to the equivariant setting. Moreover, we prove that for any cusp singularity which admits a one-parameter smoothing, the smoothing can always be induced by smoothing of locally complete intersection cusps.  The result provides evidence for the existence of the moduli stack of $\lci$ covers over semi-log-canonical surfaces. 
\end{abstract}

%%% -----------------------------------------------------------------------
\maketitle
%%% ----------------------------------------------------------------------

\tableofcontents

\section{Introduction}

A normal Gorenstein surface singularity $(\overline{V},p)$ is called a {\em cusp} if the exceptional divisor of the minimal resolution is a cycle of smooth rational curves or a rational nodal curve.    It is one type of minimally elliptic surface singularity in \cite{Laufer}.  Let $\pi: V\to \overline{V}$ be the minimal resolution, and let 
$$\pi^{-1}(p)=D=D_0+\cdots+D_{n-1}\in |-K_{V}|.$$
The analytic germ $(\overline{V},p)$ of the cusp singularity is uniquely determined by the self-intersections $D_i^2$ of $D$. 
When $n\ge 3$, we assume that $D_i\cdot D_{i\pm 1}=1$.  If $n=2$, $D$ is the union of two smooth rational curves that meet transversely at two distinct points. 
Since $D$ is contractible, Artin's criterion for contractibility implies that the intersection matrix $[D_i\cdot D_j]$ is negative-definite. 

Cusp singularities come naturally in dual pairs $(\overline{V},p)$ and $(\overline{V}^\prime,p^\prime)$.   Their resolution cycles $D$ and $D^\prime$ under minimal resolutions are called dual cusp cycles. 
From \cite{Inoue}, \cite{Hirzebruch},  there exists a compact, non-algebraic, analytic complex surface $V$ whose only curves are the dual cycles $D$ and $D^\prime$.   It is called the 
hyperbolic Inoue-Hirzebruch surface.  By contracting these two cycles we get a singular surface $\overline{\overline{V}}$ with only two cusp singularities $p, p^\prime$. 
We call $p$ the dual cusp of the cusp $p^\prime$, and vice versa.   We also call  $D$ the dual cycle of the cycle $D^\prime$, and vice versa. 
 
Recall from \cite{Looijenga}, an anti-canonical pair (called a Looijenga pair) $(Y,D)$ is a smooth rational surface $Y$, together with an anti-canonical divisor 
$D\in |-K_Y|$.  The topology and geometry of the Looijenga pairs were studied in \cite{Looijenga}, \cite{Friedman2}.  
The smoothing of cusp singularities  is close related to the geometry of Looijenga pairs. 
Looijenga studied the universal deformation of the hyperbolic Inoue-Hirzebruch surface $V$, and proposed the following conjecture, now a theorem:

\begin{thm}\label{thm_Looijenga_conjecture}(\cite[III Corollary 2.3]{Looijenga}, \cite{GHK15},\cite[\S 5]{Engel})
The cusp singularity $(\overline{V}, p^\prime)$ admits a smoothing if and only if the dual cycle $D$ of the dual cusp $p$ is the anti-canonical divisor of a Looijenga pair $(Y,D)$. 
\end{thm}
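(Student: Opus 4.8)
The plan is to prove the two implications separately, since they are of quite different character: the necessity (``smoothable $\Rightarrow$ dual cusp lies on a rational surface'') is essentially a deformation-theoretic and topological argument going back to Looijenga, whereas the sufficiency (``dual cusp on a rational surface $\Rightarrow$ smoothable'') is the substantive content of the conjecture and is where nearly all the difficulty lies. Throughout I would fix a one-parameter smoothing $\mathcal{V}\to\Delta$ of $(\overline{V},p^\prime)$ over a small disk, with smooth general fibre $V_t$ and central fibre $\overline{V}$, and work with its Milnor fibre $M$: a smooth $4$-manifold whose boundary is the link $L_{p^\prime}$ of the cusp, a $T^2$-bundle over $S^1$ whose hyperbolic monodromy is determined by the self-intersection data of the cycle $D^\prime$.

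For necessity I would extract a Looijenga pair from the Milnor fibre by a capping construction. The link $L_{p^\prime}$ of $(\overline{V},p^\prime)$ is orientation-reversingly diffeomorphic to the link $L_p$ of the dual cusp, so $M$ can be glued along its boundary to a neighbourhood of the dual cusp, and the exceptional cycle $D$ of the minimal resolution of $(\overline{V},p)$ can be used to cap $M$ off to a smooth compact complex surface $Y$. The key numerical input is that a smoothing of a Gorenstein cusp has a Milnor fibre whose intermediate cohomology contributes no positive part after capping (the relevant $p_g$-type invariant vanishes, forcing $b^+=0$ on $Y$); combined with the anticanonical nature of the boundary cycle this identifies $Y$ as a smooth rational surface with $D\in|-K_Y|$, i.e.\ as the required Looijenga pair. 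Lattice-theoretically, this is the statement that the smoothing supplies exactly the embedding data read off from Looijenga's period map for anticanonical pairs.

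For sufficiency --- the hard direction --- I would take the Looijenga pair $(Y,D)$ with $D$ the dual cycle of $p^\prime$ and construct the smoothing directly, following the Gross--Hacking--Keel strategy. The combinatorial backbone is the integral-affine structure $B$ obtained from the dual intersection complex of $D$: a cone with a single singularity at the origin whose holonomy is precisely the hyperbolic monodromy of $L_{p^\prime}$, so that the integral points of $B$ index a prospective basis for the coordinate ring of the cusp. To turn this into an actual flat family I would introduce theta functions $\vartheta_q$ indexed by $q\in B(\mathbb{Z})$, with structure constants assembled from the canonical scattering diagram of $(Y,D)$ --- a consistent wall structure whose wall functions enumerate $\mathbb{A}^1$-curves (relative Gromov--Witten invariants) on the pair. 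Consistency of this diagram yields an associative commutative algebra $A=\bigoplus_q \mathbb{C}\,\vartheta_q$, flat over a base ring $R$ formed as a completed monoid ring built from the effective curve classes of $Y$, and $\operatorname{Spec} A\to\operatorname{Spec} R$ is the desired family: the fibre over the distinguished point recovers the affine cusp $(\overline{V},p^\prime)$ with its monomial relations, while the generic fibre is smooth.

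The main obstacle will be exactly the \emph{consistency} of the canonical scattering diagram together with the \emph{positivity} that makes $A$ a genuine smoothing rather than an equisingular deformation. This is where the hypothesis that $(Y,D)$ is an honest Looijenga pair is indispensable: the existence of the rational surface guarantees enough effective anticanonical-boundary curves for the scattering diagram to close up consistently, so that theta-function multiplication is well defined with nonnegative-integer structure constants, and it forces $R$ to be large enough to contain a smoothing direction. I would then verify flatness using the theta basis (which trivializes the sheaf of algebras) and identify the central fibre by setting the deformation parameters to zero. As an alternative to the scattering machinery I would record Engel's route, in which the smoothing is built from an integral-affine structure on a disk with the prescribed cusp boundary and total singularity charge matching $-K_Y$; here too the existence of $(Y,D)$ is precisely what supplies the required affine structure, and the two approaches meet in the common principle that a rational surface with anticanonical cycle $D$ provides exactly the positivity needed to smooth the dual cusp $p^\prime$.
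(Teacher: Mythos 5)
The paper does not actually prove Theorem \ref{thm_Looijenga_conjecture}: it is quoted from the literature, with the necessary direction attributed to Looijenga and the sufficient direction to Gross--Hacking--Keel and to Engel, and the text immediately after the statement only summarizes those external proofs. Your proposal is therefore not in competition with an internal argument; it is an outline of exactly the cited proofs, and as such the overall strategy is the right one. Two remarks on the sketch itself. In the necessity direction, the step ``$M$ can be glued along its boundary to a neighbourhood of the dual cusp \dots to a smooth compact complex surface $Y$'' is where the real content sits: an orientation-reversing diffeomorphism of links gives only a topological capping, and to obtain a \emph{complex} surface you must first globalize the local one-parameter smoothing of $(\overline{V},p^\prime)$ to a deformation of the compact Inoue--Hirzebruch surface $\overline{\overline{V}}$ carrying both dual cusps; this globalization (surjectivity of the restriction from deformations of $\overline{\overline{V}}$ to deformations of the cusp germ, via the universal deformation of $\overline{\overline{V}}$) is precisely the content of \cite[III, \S 2]{Looijenga}, after which the nearby fibre contains the surviving cusp $p$, its simultaneous resolution yields the cycle $D$, and rationality follows from $q=p_g=0$. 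Your $b^+$/Milnor-fibre formulation is the correct topological shadow of this but, as written, does not by itself produce the complex structure on $Y$. In the sufficiency direction your sketch correctly identifies the two known routes; note that the present paper's own machinery (Sections \ref{sec_typeIII-degeneration}--\ref{sec_typeIII_pair}: Type III anticanonical pairs, $d$-semistability, integral-affine spheres and the Friedman--Miranda smoothing criterion) is an equivariant elaboration of Engel's route rather than of the scattering-diagram route, so the alternative you mention only in passing is the one that actually meshes with the rest of the paper.
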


Looijenga \cite[III Corollary 2.3]{Looijenga} proved that there exists a universal deformation of the hyperbolic Inoue-Hirzebruch surface $V$ and gave a proof for the necessary condition of Theorem \ref{thm_Looijenga_conjecture}. 
The sufficient condition of Theorem \ref{thm_Looijenga_conjecture} was first proven by Gross-Hacking-Keel \cite{GHK15} using mirror symmetry.   For a Looijenga pair $(Y,D)$, Gross-Hacking-Keel constructed the mirror family of 
$(Y,D)$ using the log-Gromov-Witten invariants of $(Y,D)$, and the mirror family gives the smoothing of the dual cusp $p^\prime$ of $D$.   The natural existence of dual cusps $p, p^\prime$ on the Inoue-Hirzebruch surface implies the mirror symmetry property.   Later in \cite[\S 5]{Engel}, Engel gave a proof of the  sufficient condition of Theorem \ref{thm_Looijenga_conjecture}  using birational geometry--Type III degeneration of Looijenga pairs.  The proof is elegant and can be understood in a combinatorial way.   Both of these ideas were used to study the smoothing components of cusp singularities in \cite{FE}, and the compactification of KSBA moduli space of log Calabi-Yau surfaces and K3 surfaces, see \cite{AAB2024}, \cite{Alexeev_Engel}.

The main goal of this paper is to prove an equivariant version of Looijenga's conjecture.  Let us first discuss our motivation.  
For a cusp singularity germ $(\overline{W}, q)$, if the local embedded dimension is higher than $5$, then  \cite[Theorem 3.13]{Laufer} showed that the singularity is not a locally complete intersection (l.c.i.) singularity.  
In \cite[Proposition 4.1 (2)]{NW}, Neumann and Wahl  constructed a finite  cover 
$(\overline{V}, p)$ of $\overline{W}$ with finite transformation  group $G$ so that $(\overline{V}, p)$ is an l.c.i. cusp (actually a hypersurface cusp).  
The finite cover is determined by the link $\Sigma$ of the singularity germ $(\overline{W}, q)$, which is, by definition, the boundary of a neighborhood around the singularity $p$.   The link $\Sigma$ is a $T^2$-bundle over the circle $S^1$ and 
the first homology group 
$H_1(\Sigma,\zz)=\zz\oplus G^\prime$, 
where $G^\prime$ is the torsion subgroup of $H_1(\Sigma,\zz)$.  
There is a  surjective morphism $H_1(\Sigma,\zz)=\zz\oplus G^\prime\to G^\prime$   up to automorphisms of 
$\pi_1(\Sigma)$.  
The finite cover $\overline{V}$ in \cite[Proposition 4.1 (2)]{NW} was constructed using the monodromy matrix of the link and the 
discriminant cover determined by the morphism $H_1(\Sigma,\zz)=\zz\oplus G^\prime\to G^\prime$  above. We should point out that this finite cover may not be Galois in general.  It is interesting to see when a finite cover of a cusp is a Galois cover, see \cite{Anderson}.

Let $G$ be the finite transformation group of the finite cover. 
Then we have a quotient map:
\begin{equation}\label{eqn_quotient_of_cusp}
\mu:  (\overline{V}, p)\to (\overline{W}, q)
\end{equation}
such that 
$\overline{V}/G\cong \overline{W}$, and $\overline{V}\setminus \{p\}\to \overline{W}\setminus \{q\}$ is an unramified $G$-cover.

We are interested in the Gorenstein smoothings of  $(\overline{W}, p)$ which are induced from $G$-equivariant smoothings of the cusp  $(\overline{V}, q)$. 
For the cusp  $(\overline{W}, q)$, we denote its dual cusp by $q^\prime$, and the corresponding singular Inoue-Hirzebruch surface by 
$(\overline{\overline{W}}, q, q^\prime)$, and the compact complex analytic  Inoue-Hirzebruch surface by $(W, E, E^\prime)$. 

Since the two cusps $(\bbW, q, q^\prime)$ are dual to each other, we present the smoothing of the cusp $q^\prime$ in the following theorem. 
To state the theorem, we introduce a finite group action on Looijenga pairs. 
In the compactification of KSBA moduli space of log Calabi-Yau surfaces in  \cite{AAB2024}, the Looijenga pairs can be deformed to log Calabi-Yau surfaces with quotient singularities.  We define a finite group $G$-action on a Looijenga pair $(Y,D)$ (with negative definite self-intersection matrix $[D_i\cdot D_j]$) to be $hyperbolic$, if $G$ acts on $Y\setminus D$ only has quotient singularities, and acts on a neighborhood $V_D$ of $D$ as the action of neighborhood of the contracted cusp.

Our main result is:

\begin{thm}\label{thm_cusp_equivariant_smoothing}
The cusp $(\overline{V}, p^\prime)$ admits a $G$-equivariant smoothing such that the quotient space induces a smoothing of the cusp $(\overline{W}, q^\prime)$
if and only if the dual cycle $D$ of the cusp $p$ lies as an anti-canonical divisor in a Looijenga pair $(Y, D)$, and the pair $(Y, D)$ admits a  hyperbolic group $G$-action such that the $G$ action  on the complement 
$Y\setminus D$ only has quotient isolated singularities, and the quotient space $(Y, D)/G$, maybe after suitable resolution of singularities along $D/G$, gives a Looijenga pair $(X,E)$ such that $E$ is the dual cycle of the dual cusp $q$ of $q^\prime$. 
\end{thm}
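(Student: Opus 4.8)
The plan is to bootstrap from the non-equivariant statement, Theorem \ref{thm_Looijenga_conjecture}, and to transport the $G$-action across the cusp-smoothing correspondence on both the $p'$-side and the $q'$-side. First I would record the two instances of Theorem \ref{thm_Looijenga_conjecture} that organize the whole argument: smoothings of the cusp $(\bV, p')$ are governed by Looijenga pairs $(Y,D)$ whose anticanonical cycle $D$ is the resolution cycle of the dual cusp $p$, while smoothings of $(\bW, q')$ are governed by Looijenga pairs $(X,E)$ with $E$ the resolution cycle of the dual cusp $q$. I would then establish the dual-side compatibility of the Neumann--Wahl cover $\mu$ of \eqref{eqn_quotient_of_cusp}: the $G$-cover $p\to q$ induces, via the link data, a compatible $G$-relation between the dual cusps $p'$ and $q'$ and between the dual cycles, so that after resolution one expects $E=D/G$. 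The heart of the matter is then to show that a (hyperbolic) $G$-action on $(Y,D)$ and a $G$-equivariant smoothing of $p'$ are two manifestations of the same datum.

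For the direction $(\Leftarrow)$ I would exploit the functoriality of the constructions that prove the sufficiency half of Theorem \ref{thm_Looijenga_conjecture}. Given a hyperbolic $G$-action on $(Y,D)$, the Gross--Hacking--Keel mirror family \cite{GHK15} --- built canonically from the log Gromov--Witten invariants of $(Y,D)$ --- inherits a $G$-action, since $G$ permutes the relevant stable log maps and hence acts on the canonical scattering diagram and on the theta functions; the resulting total space is a $G$-equivariant smoothing of $p'$. (Engel's Type III degeneration \cite{Engel} yields the same conclusion: a $G$-action on the Looijenga pair propagates along the degeneration of the Inoue--Hirzebruch surface to a $G$-action on the one-parameter smoothing.) The hyperbolicity hypothesis is precisely what guarantees that passing to the $G$-quotient produces, after resolving $D/G$, a Looijenga pair $(X,E)$ with $E$ the resolution cycle of $q$: on $Y\setminus D$ the quotient has only isolated quotient singularities, and near $D$ the action is modeled on the action near the contracted cusp, so the quotient family smooths $q'$. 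Concretely I would show that the $G$-quotient of the GHK mirror coincides with the mirror of $(X,E)$, i.e. that taking mirrors commutes with the $G$-quotient in this situation; then Theorem \ref{thm_Looijenga_conjecture} applied to $(X,E)$ identifies the quotient as a genuine smoothing of $q'$.

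For the direction $(\Rightarrow)$ I would run the same correspondence backwards. A $G$-equivariant smoothing of $(\bV, p')$ determines, through the necessity half of Theorem \ref{thm_Looijenga_conjecture} (Looijenga's universal deformation argument \cite{Looijenga}), a Looijenga pair $(Y,D)$; the point is that the $G$-action on the smoothing induces a $G$-action on $(Y,D)$, which I would extract either from the $G$-action on the central fiber of the associated Type III degeneration or from the $G$-equivariance of the canonical theta basis. That the equivariant smoothing descends to a smoothing of $(\bW,q')$ forces the quotient $(Y,D)/G$ to resolve to a Looijenga pair $(X,E)$ with $E$ the dual cycle of $q$, again by Theorem \ref{thm_Looijenga_conjecture} applied downstairs. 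Finally, the compatibility of the $G$-action with the contraction of $D$ to the cusp $p$ --- which is built into the cover $\mu$ near the singular point --- shows that the $G$-action on $(Y,D)$ is hyperbolic in the sense defined above.

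The step I expect to be the main obstacle is the transport of the $G$-action across the mirror correspondence together with the verification that quotients and mirrors commute. The GHK mirror is defined by a canonical but intricate gluing (scattering diagram, broken lines, theta functions), and while a $G$-action on $(Y,D)$ plainly acts on all of this combinatorial data, checking that the quotient of the resulting family is again of GHK type for $(X,E)$ --- rather than merely \emph{some} smoothing --- requires controlling how the quotient singularities on $Y\setminus D$ and the resolution along $D/G$ interact with the canonical coordinates. I would therefore isolate a lemma asserting that for a hyperbolic $G$-action the canonical scattering diagram of $(Y,D)$ descends to that of the resolved quotient $(X,E)$; this reduces the entire theorem to the non-equivariant Theorem \ref{thm_Looijenga_conjecture} applied on both the $p'$- and the $q'$-sides.
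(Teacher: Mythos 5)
Your necessity direction is essentially the paper's: the paper takes the given $G$-equivariant smoothing of the Inoue--Hirzebruch surface $(\bbV,p^\prime)$, performs a $G$-equivariant simultaneous resolution of the cusps $p_t$ in the fibers, and reads off the hyperbolic $G$-action on the resulting Looijenga pair $(Y_t,D_t)$ from the analytic-local description of the cusp neighborhood; your sketch of extracting the action from the equivariant family is the same idea. The divergence is in the sufficiency direction, where you propose to make the Gross--Hacking--Keel mirror construction $G$-equivariant and to prove that taking mirrors commutes with taking quotients. The paper does not do this: it follows Engel's route exclusively, building a Type III canonical degeneration $\XX_0=\bigcup_i V_i$ with $V_0$ the Inoue--Hirzebruch surface, constructing the $G$-action on $\XX_0$ explicitly (via the integral-affine surface and the action on the neighborhood $V_C$ of the cycle), verifying $d$-semistability equivariantly, and producing the equivariant smoothing from the nonvanishing of $H^0(\XX_0,T^1_{\XX_0})^G$; the quotient smoothing of $q^\prime$ is then obtained by contracting and quotienting the resulting family.

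The genuine gap is that your route defers the entire difficulty to the unproven lemma that the canonical scattering diagram of $(Y,D)$ descends to that of the resolved quotient $(X,E)$. This is not a routine functoriality statement: the quotient $Y/G$ carries isolated quotient singularities away from $D$, the GHK construction as cited is set up for smooth log Calabi--Yau pairs, and relating the log Gromov--Witten counts (and hence the walls and theta functions) of $(Y,D)$ to those of a resolution of $(Y,D)/G$ involves a nontrivial crepant-resolution/orbifold comparison that the paper never needs and that, to date, has only been carried out in the $\zz_2$ case by Simonetti with substantial effort. Your parenthetical remark that Engel's degeneration ``yields the same conclusion'' because the $G$-action ``propagates along the degeneration'' is the paper's actual argument, but the propagation is exactly what has to be constructed and checked (the action on the dual complex, the two cases according to whether the cycle is a minimal period, the equivariant choice of gluings making $\XX_0$ $d$-semistable, and the existence of a $G$-invariant section of $T^1_{\XX_0}$); as written, neither branch of your sufficiency argument contains that content. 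So the proposal is a plausible program, with a defensible alternative strategy, but not a proof: the step you yourself flag as the main obstacle is the theorem.
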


To prove the necessary condition of the theorem, we use the fact that for any finite subgroup $G$ in the automorphism group of an Inoue-Hirzebruch surface $\bbV$, the quotient space is still an Inoue-Hirzebruch surface $\bbW$.
We  follow the method of \cite{Engel} to prove the sufficient condition, by putting the finite group action into the combinatorial construction of Type III canonical degeneration  pairs. 

Theorem \ref{thm_cusp_equivariant_smoothing} implies the following result.
\begin{thm}\label{thm_smoothing_lci_cusp_intro}(Theorem \ref{thm_smoothing_lci_cusp})
  Let $(\overline{V}^\prime,p^\prime)$ be a cusp singularity.   Suppose that $(\overline{V}^\prime,p^\prime)$ admits a smoothing $f: \overline{\sV}\to \Delta$.  Then there exists a smoothing $\tilde{f}: \sV\to \Delta$ of an lci cusp together endowed with  a finite group $G$ action such that the quotient induces the smoothing $f: \overline{\sV}\to \Delta$.
\end{thm}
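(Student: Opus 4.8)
The plan is to reduce Theorem \ref{thm_smoothing_lci_cusp_intro} to the equivariant correspondence of Theorem \ref{thm_cusp_equivariant_smoothing}, feeding in the concrete lci cover supplied by Neumann--Wahl. To match the notation of that theorem I rename the given cusp $(\overline{V}', p')$ as $(\overline{W}, q')$ and write $q$ for its dual cusp, with $E$ its resolution cycle. Since $f:\overline{\sV}\to\Delta$ is a smoothing of $(\overline{W}, q')$, the necessary direction of Theorem \ref{thm_Looijenga_conjecture}, in the form proved by Looijenga, produces from $f$ a Looijenga pair $(X, E)$ realizing the dual cycle $E$ of the dual cusp $q$ as an anti-canonical divisor. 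I will keep track of the specific pair attached to $f$, not merely its existence, since this is what lets me recover $f$ at the end.

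Next I would invoke \cite[Proposition 4.1(2)]{NW} to obtain a finite cover $\mu:(\overline{V}, p')\to(\overline{W}, q')$ with transformation group $G$, where $(\overline{V}, p')$ is a hypersurface---hence lci---cusp, $\overline{V}/G\cong\overline{W}$, and $\mu$ is unramified over the punctured germ. Write $p$ for the dual cusp of $p'$, with resolution cycle $D$. Using the fact already exploited in the necessary direction of Theorem \ref{thm_cusp_equivariant_smoothing}, namely that the quotient of an Inoue--Hirzebruch surface by a finite automorphism group is again an Inoue--Hirzebruch surface, I extend $\mu$ to a $G$-quotient $\bbV\to\bbW$ of the singular Inoue--Hirzebruch surfaces carrying both cusps. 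Restricting to the second cusp exhibits the dual cusps as a compatible $G$-cover $p\to q$, and in particular shows that the resolution cycle $D$ of $p$ covers $E$.

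The heart of the argument is then to lift $(X, E)$ through this cover to a Looijenga pair $(Y, D)$ carrying a hyperbolic $G$-action whose quotient, after resolving along $D/G$, recovers $(X, E)$. I would build $Y\to X$ as the $G$-cover dictated by the link and discriminant data of $p\to q$: on a neighborhood of $E$ it is forced to be the cusp cover $D\to E$, and over $X\setminus E$ it extends to a $G$-cover whose only singularities are the prescribed isolated quotient singularities, so that after resolution $Y$ is a smooth rational surface with $D\in|-K_Y|$. The hyperbolicity of the action is exactly the statement that $G$ acts on $Y\setminus D$ with isolated quotient singularities and on a neighborhood of $D$ as the contracted-cusp action, which is built into the construction. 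With $(Y, D)$ in hand, the sufficient direction of Theorem \ref{thm_cusp_equivariant_smoothing} yields a $G$-equivariant smoothing $\tilde f:\sV\to\Delta$ of the lci cusp $(\overline{V}, p')$ whose quotient induces a smoothing of $(\overline{W}, q')$.

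Finally I would identify this induced smoothing with the given $f$: both are smoothings of $(\overline{W}, q')$ attached, via the Gross--Hacking--Keel and Engel correspondence, to the same anti-canonical pair $(X, E)$, hence they lie in the same smoothing component, and after a base change on $\Delta$ they agree. I expect the main obstacle to be the construction of the preceding paragraph, that is, producing the equivariant lift $(Y, D)$ of $(X, E)$ and verifying that the cover has only quotient singularities away from $D$ while acting hyperbolically along $D$, so that Theorem \ref{thm_cusp_equivariant_smoothing} applies. A secondary difficulty is pinning down the final identification with $f$ itself rather than merely with its smoothing component, which may require a base change and a comparison of the one-parameter families within the component.
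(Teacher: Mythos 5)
Your proposal follows essentially the same route as the paper: extract the Looijenga pair $(X,E)$ for the dual cycle from the given smoothing via Looijenga's theorem, take the Neumann--Wahl lci cover, glue it via Van Kampen to a $G$-cover of $X\setminus E$ to produce the hyperbolic $G$-pair $(Y,D)$, and conclude by the sufficient direction of Theorem \ref{thm_cusp_equivariant_smoothing}; the only cosmetic difference is that you apply Neumann--Wahl at the given cusp $q^\prime$ rather than at its dual $q$ as the paper does, which by the trace identity $\tr(\sigma)=\tr(\sigma^*)$ produces the same dual pair of lci cusps. Your closing step identifying the induced smoothing with the given $f$ itself (rather than with some smoothing of the same cusp) is not carried out in the paper either, and your sketch of it via smoothing components is the one point where neither argument is complete.
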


Theorem \ref{thm_cusp_equivariant_smoothing} and  Theorem \ref{thm_smoothing_lci_cusp_intro} have applications for the moduli stack of $\lci$ covers defined  in \cite{Jiang_2022}, where the author constructed the virtual fundamental class on the 
moduli space of $\lci$ covers over the semi-log-canonical surfaces. 
Surface cusp singularities are semi-log-canonical singularities in the construction of the moduli space of  general type surfaces.   The KSBA compactification of the moduli space $\overline{M}_{K^2, \chi}$ of general type surfaces is the surface analogue of the moduli space of stable curves $\overline{M}_g$.  Here $K^2=K_S^2, \chi=\chi(\sO_S)$ for a surface in $\overline{M}_{K^2, \chi}$.   The boundary of $\overline{M}_{K^2, \chi}$ parametrizes singular surfaces with semi-log-canonical singularities; see \cite{Kollar-Shepherd-Barron} for more details.  Let us only talk about the normal surface case.   Then the log-canonical surface singularities, except the locally complete intersection singularities,   quotient singularities,  are given by 
simple elliptic singularities and cusp singularities.    The cusp singularities with higher embedded dimension $(>5)$ are bad surface singularities, which have higher obstruction spaces (see \cite{Jiang_2021}).
These singularities cause trouble in the construction of a perfect obstruction theory on the moduli space $\overline{M}_{K^2, \chi}$ in \cite{Jiang_2022}.  

In order to control such bad singularities, the author introduced the $\lci$ covering DM stack $\SS^{\lci}\to S$ for the semi-log-canonical surface $S$ with  cusp singularities of higher embedded dimension. 
The stacky structure around a cusp singularity $p\in S$ is given by the DM stack $[\widetilde{S}/G]$, where $(\widetilde{S}, p)\to (S,q)$ is the $\lci$ cover as in \cite[Proposition 4.1]{NW} and $G$ is the transformation group. 
Suppose that there is a $\qq$-Gorenstein flat family $\sS\to T$ of semi-log-canonical surfaces.  If there are cusp singularities with higher embedded dimension on the fiber surface $\sS_t$, then we can take the $\lci$ cover to get  the $\lci$ covering DM stack.  But it is hard to see if the $\lci$ covering DM stacks form a flat family.    If there is an  $\lci$ covering DM stack $\SS^{\lci}\to S$ to a semi-log-canonical surface $S$ with only cusp singularities,  then any 
$G$-equivariant smoothing of $\widetilde{S}$ locally gives a smoothing of the  $\lci$ covering DM stack $\SS^{\lci}\to S$.  Thus it gives a smoothing of the underlying semi-log-canonical surface $S$.  In 
\cite[\S 6]{Jiang_2022},  the author constructed the moduli stack $\overline{M}^{\lci}_{K^2, \chi}$ of $\lci$ covers over the KSBA component $\overline{M}_{K^2, \chi}$, by taking crepant resolution of smoothing of cusp singularities which becomes $\lci$ on the fiber surfaces.  Thus,  every $\qq$-Gorenstein family of semi-log-canonical surfaces in $\overline{M}_{K^2, \chi}$ can be obtained from a family of $\lci$ covering DM stacks.  The price we pay for taking crepant resolutions is that we only have a proper morphism $\overline{M}^{\lci}_{K^2, \chi}\to   \overline{M}_{K^2, \chi}$ from the moduli stack of lci covers to the  KSBA space.

Thus,  from Theorem \ref{thm_cusp_equivariant_smoothing} and  Theorem \ref{thm_smoothing_lci_cusp_intro} we have:

\begin{cor}  
The moduli space of $\lci$ covers over s.l.c. surfaces defined in \cite[\S 6]{Jiang_2022} is valid  if there are bad cusp singularities (with embedded dimension $>5$) on the semi-log-canonical surfaces $S$ in the moduli space 
$\overline{M}_{K^2, \chi}$. 
\end{cor}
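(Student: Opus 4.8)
The plan is to read the corollary as the assertion that the family of $\lci$ covering DM stacks required in \cite[\S 6]{Jiang_2022} actually exists and is flat, and to deduce this by feeding Theorems \ref{thm_cusp_equivariant_smoothing} and \ref{thm_smoothing_lci_cusp_intro} into the construction there. First I would fix a $\qq$-Gorenstein flat family $\sS\to T$ of semi-log-canonical surfaces belonging to $\overline{M}_{K^2,\chi}$, and localize the problem near a bad cusp $q\in\sS_t$ of embedded dimension $>5$. Away from such points the fibers carry only $\lci$ or quotient singularities, where the $\lci$ covering stack coincides with the surface itself and flatness is automatic; so the entire content is local around each bad cusp, where by \cite[Proposition 4.1]{NW} the stack is modeled on $[\widetilde{S}/G]$, with $(\widetilde{S},p)\to(S,q)$ the Neumann--Wahl hypersurface cover and $G$ its transformation group.

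Next I would restrict the family to a one-parameter disc $\Delta\hookrightarrow T$ realizing a smoothing of the cusp $(S,q)$. This is exactly the hypothesis of Theorem \ref{thm_smoothing_lci_cusp_intro}, which then produces a $G$-equivariant smoothing $\tilde{f}:\sV\to\Delta$ of an $\lci$ cusp whose quotient \emph{is} the given smoothing of $(S,q)$. Passing to the quotient stack $[\sV/G]$ over $\Delta$ yields a flat family of DM stacks whose coarse space recovers the given local smoothing of $S$ and whose central fiber is the local $\lci$ covering stack $[\widetilde{S}/G]$. In other words, the $\lci$ covering stack deforms flatly along the smoothing direction present in the family---precisely the property declared difficult in the introduction. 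Here I would also invoke Theorem \ref{thm_cusp_equivariant_smoothing} to certify that the equivariant smoothing arises from a hyperbolic $G$-action on a Looijenga pair, so that the $\lci$ cusp appearing is compatible with (indeed identifiable with) the Neumann--Wahl cover rather than some unrelated $\lci$ cusp.

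Finally I would globalize and conclude. Because the equivariant smoothings are dictated locally by the link $\Sigma$ and its monodromy---data intrinsic to each cusp---the pointwise constructions are canonical up to the relevant automorphisms and glue to a flat family of $\lci$ covering DM stacks $\SS^{\lci}\to T$ lying over $\sS\to T$. Taking coarse spaces and the crepant resolutions of the $\lci$ smoothings as in \cite[\S 6]{Jiang_2022} then produces the moduli stack $\overline{M}^{\lci}_{K^2,\chi}$ together with the proper morphism $\overline{M}^{\lci}_{K^2,\chi}\to\overline{M}_{K^2,\chi}$, establishing that the construction is valid in the presence of bad cusps.

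I expect the main obstacle to be the \emph{flatness and compatibility of the local-to-global gluing}. The two theorems supply an equivariant smoothing only cusp-by-cusp and disc-by-disc; upgrading this to a single flat family over the base $T$ requires matching the given quotient presentation of the smoothing with $[\sV/G]$ canonically, and checking that the higher obstruction spaces of the bad cusps (cf. \cite{Jiang_2021}) genuinely disappear after pulling back to the $G$-equivariant hypersurface cover. Verifying that this equivariant deformation theory is unobstructed along the whole family, so that the covering stacks vary flatly rather than merely fiberwise, is where the real work lies.
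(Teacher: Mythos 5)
Your first two paragraphs reproduce, in expanded form, essentially what the paper does: the corollary carries no separate proof and is presented as an immediate consequence of Theorems \ref{thm_cusp_equivariant_smoothing} and \ref{thm_smoothing_lci_cusp_intro}, together with the discussion in the introduction --- localize at a bad cusp of embedded dimension $>5$, model the $\lci$ covering stack there by $[\widetilde{S}/G]$ via \cite[Proposition 4.1]{NW}, restrict to a one-parameter disc realizing the given smoothing, and invoke Theorem \ref{thm_smoothing_lci_cusp_intro} to produce a $G$-equivariant smoothing of an $\lci$ cusp whose quotient is the given one, so that the local $\lci$ covering stack deforms along with $S$. Up to that point your argument and the paper's intended derivation coincide.

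Where you depart from the paper is your third paragraph. You assert that the pointwise constructions ``glue to a flat family of $\lci$ covering DM stacks $\SS^{\lci}\to T$'' over the whole base $T$ and that this establishes the proper morphism $\overline{M}^{\lci}_{K^2,\chi}\to \overline{M}_{K^2,\chi}$. The paper explicitly does not claim this: immediately after the corollary it states that the two theorems ``can only get one-parameter equivariant smoothing of cusp singularities,'' so that one only obtains a curve inside the KSBA moduli space and the moduli stack of $\lci$ covers, and it defers both the globalization over higher-dimensional bases and the properness question to future work (the planned equivariant generalization of \cite{AAB2024}). Your own closing paragraph correctly identifies the local-to-global flatness and the unobstructedness of the equivariant deformation theory along the whole family as the unresolved difficulties, but the body of your proof treats them as settled. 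So the proposal is correct exactly insofar as it matches the paper --- the one-parameter, cusp-by-cusp statement that any smoothing of a bad cusp is induced by a $G$-equivariant smoothing of its $\lci$ cover --- while the additional gluing and properness claims constitute a genuine gap that the paper itself acknowledges and makes no attempt to close.
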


Our Theorem \ref{thm_cusp_equivariant_smoothing} and  Theorem \ref{thm_smoothing_lci_cusp_intro} can only get one-parameter equivariant  smoothing of cusp singularities. Thus, we only get a curve inside the KSBA moduli space and the moduli stack of lci covers.  It is interesting to see the proper map property $\overline{M}^{\lci}_{K^2, \chi}\to   \overline{M}_{K^2, \chi}$ in the case of cusp singularities.   In a future work, we generalize the work \cite{AAB2024} in the equivariant setting and we hope to see the proper map on the moduli spaces.

The case of simple elliptic singularities is studied in \cite{Jiang_2023} using the geometry and topology  for the Minor fibre and link of the singularities in the smoothing. 

\subsection{Related work}

There are two classes of log canonical surface germ  singularities $(S,p)$.   The index of the singularity $p$ is, by definition, the least integer $N$ such that 
$\omega_S^{[N]}:=(\omega_S^{\otimes N})^{**}=\sO_S(N\cdot K_S)$ is invertible, where $K_S$ is the canonical class of $S$.  In the case of $N=1$,  the germ singularities $(S,p)$ are given by simple elliptic singularities and cusp singularities. 
Our study above focuses on the index one case, and a key point of Theorem \ref{thm_cusp_equivariant_smoothing} is that there exist  finite group 
$G$-actions on the Inoue-Hirzebruch surface 
$(\bbV, p,p^\prime)$ (only two points $p, p^\prime$ are fixed by $G$) such that the quotient $(\bbV, p,p^\prime)/G$ is still an  Inoue-Hirzebruch surface $(\bbW, q, q^\prime)$. 
In this setting we have to work on analytic complex surfaces. 

If the index $N>1$, then from \cite[Theorem 4.24]{Kollar-Shepherd-Barron}, the germ singularities $(S,p)$ are given by $\zz_2, \zz_3, \zz_4, \zz_6$ quotients of  simple elliptic singularities,  and  $\zz_2$ quotients of cusps, and  degenerate cusps. 
Degenerate cusp singularities are non-normal surface singularities. 
The $\zz_2$ quotient-cusp singularities are rational singularities, which can not be cusp singularities any more.   For such germ singularities, the link $\Sigma$ is a rational homology sphere.   Let $G=H_1(\Sigma,\zz)$ be the finite abelian group.   In \cite{NW}, Neumann-Wahl constructed a universal abelian cover $(\widetilde{S}, q)\to (S,p)$ which is a Galois $G$-cover. 
The germ $(\widetilde{S}, q)$ is a locally complete intersection cusp singularity.  The local defining equations of this  locally complete intersection cusp singularity  $(\widetilde{S}, q)$ were given in \cite[Theorem 5.1]{NW}.   Since a locally complete intersection  singularity admits a $G$-equivariant smoothing, its quotient gives the smoothing of the singularity $(S,p)$.  This also shows that our moduli stack of $\lci$ covers over s.l.c. surfaces defined in \cite[\S 5.3.6]{Jiang_2022} is valid if there are such quotient-cusp singularities. 

On the other hand, in \cite{Simonetti},  A. Simonetti studied the Looijenga conjecture for  $\zz_2$-equivariant smoothings of cusps singularities using log Gromov-Witten theory and  the techniques in \cite{GHK15}. 
Thus it is interesting to study the $G$-equivariant smoothings of $(\widetilde{S}, q)$ which induce smoothings of the  $\zz_2$ quotient-cusp $(S,p)$.   Since the quotient $(\widetilde{S},q)/G=(S,p)$ is not cusp singularity anymore, this provides difficulties for the construction of the corresponding Looijenga pairs. 

%%% ----------------------------------------------------------------------
\subsection*{Acknowledgments}

Y. J. thanks Yuchen Liu and Ziquan Zhuang for  valuable discussion  on semi-log-canonical singularities.  Y. J. thanks P. Engel for his correspondence of the proof of Looijenga conjecture and the wonderful talk at Kansas, and Valery Alexeev, H\"ulya Arg\"uz, and Pierrick Bousseau for the valuable discussions. 
This work is partially supported by NSF DMS-2401484 and Simon Collaboration Grant.

%%%----------------------------------------------------------------------

\section{Inoue-Hirzebruch surfaces with a finite group action}\label{sec_Inoue-Hirzebruch}

We study the Inoue-Hirzebruch surfaces together with a finite group action. 

\subsection{Inoue-Hirzebruch surfaces}\label{subsec_Inoue-Hirzebruch}

Let us recall the construction of Inoue-Hirzebruch surface $V$ in \cite{Inoue}, \cite{Hirzebruch}, \cite[III, \S 2.]{Looijenga}, \cite{GHK15}. 

Let $M=\zz^2$ be a rank two lattice, and $\sigma\in SL_2(\zz)$ be a hyperbolic matrix.  Then $\sigma$ has two real eigenvalues $\lambda, \frac{1}{\lambda}$ for $\lambda>1$. 
Let $v_1, v_2$ be the two eigenvectors corresponding to $\lambda_1=\frac{1}{\lambda}, \lambda_2=\lambda$ so that $v_1\wedge v_2>0$.  
Let $\overline{C}, \overline{C}^\prime$ be two strictly convex cones spanned by 
$v_1, v_2$ and $v_2, -v_1$.   Let $C, C^\prime$ be their interiors which are both preserved by $\sigma$. 
Denote by 
$$\mathfrak{D}:=\{z=x+iy\in M_{\cc}/M| y\in \mathbb{H}\}$$
where $\mathbb{H}$ is the upper half-plane.  Then the finite cyclic group generated by $\sigma$ acts freely and properly discontinuously on $\mathfrak{D}$.  The quotient surface 
$V^{o}:=\mathfrak{D}/\langle \sigma\rangle$ is the open Inoue-Hirzebruch surface.  The compactification (Inoue-Hirzebruch surface) $\bbV:=V^{o}\cup\{p,p^\prime\}$ is obtained by adding two singular cusp points 
$p, p^\prime$.

Let 
$$U_{C}^\prime:=\{z=x+iy\in M_{\cc}/M| y\in C\}; \quad U_{C^\prime}^\prime:=\{z=x+iy\in M_{\cc}/M| y\in C^\prime\}$$
Then we have two neighborhoods $V_C^{o}:=U_{C}^\prime/\langle \sigma\rangle$ and $V_{C^\prime}^{o}:=U_{C^\prime}^\prime/\langle \sigma\rangle$ in $\bbV^\prime$.  
Let 
$$\bbV_C:=V_C^{o}\cup\{p\}; \quad  \bbV_{C^\prime}:=V_{C^\prime}^{o}\cup\{p^\prime\}.$$
Then $(\bbV_C, p)$ and $(\bbV_{C^\prime}, p^\prime)$ are the two singularity germs for the cusps $p$ and 
$p^\prime$, respectively in $\bbV$. 

Taking  the minimal resolutions of singularities for $p, p^\prime$, we get a smooth compact complex surface $V$ with two cycles of rational curves 
$$D=D_0+D_1+\cdots+D_{n-1}; \quad  D^\prime=D_0^\prime+ D_0^\prime+\cdots+D_{s-1}^\prime$$
corresponding to $p, p^\prime$ respectively.  
Let $V:=V^{o}\cup\{D, D^\prime\}$. 
Then we  call $V$ the Inoue-Hirzebruch surface with only cycles of curves $D$ and $D^\prime$. 
Let 
$$(d_0, d_1, \cdots, d_n); \quad   (d_0^\prime, d_1^\prime, \cdots, d_s^\prime)$$
be the cycle of $D$ and $D^\prime$ given by negative self-intersection numbers, where 
\begin{equation}\label{eqn_self_intersection}
d_i=
\begin{cases}
-D_i^2 & \text{~if~}n>0\\
2-D_i^2 & \text{~if~}n=0.
\end{cases}
\end{equation}
The numbers $d_i^\prime$ are defined similarly.   Since both $D$ and $D^\prime$ are contractible, the intersection matrix 
$[D_i\cdot D_j]$ is negative definite, which implies that 
$d_i\ge 2$ for all $i$, and $d_i\ge 3$ for some $i$. 
The generator $\sigma$ is given by:
$$\sigma=\mat{cc} 0&-1\\
1&d_{n-1}\rix \mat{cc} 0&-1\\
1&d_{n-2}\rix\cdots \mat{cc} 0&-1\\
1&d_0\rix$$

The duality property of the dual cusps $D$ and $D^\prime$ implies that the cycles $(d_0, \cdots, d_{n-1})$ and $(d^\prime_0, \cdots, d^\prime_{s-1})$ have the following properties:
if 
\begin{equation}\label{eqn_cycle_D}
(d_0, \cdots, d_{n-1})=(a_0+3, \underbrace{2, \cdots, 2}_{b_0}, \cdots, a_k+3, \underbrace{2, \cdots, 2}_{b_k})
\end{equation}
where $a_i, b_i\ge 0$. Then the negative self-intersections $(d^\prime_0, \cdots, d^\prime_{s-1})$ are obtained from $D$ by:
\begin{equation}\label{eqn_cycle_D_prime}
(d^\prime_0, \cdots, d^\prime_{s-1})=(b_0+3, \underbrace{2, \cdots, 2}_{a_0}, \cdots, b_k+3, \underbrace{2, \cdots, 2}_{a_k}).
\end{equation}

\subsection{Hirzebruch-Inoue modular surface}\label{subsec_Hirzebruch}

The Inoue surface can also be constructed as  Hilbert modular surfaces
 in \cite{Hirzebruch}.  
From \cite[\S 2]{Hirzebruch}, \cite[\S 2]{Pinkham}, let $K=\qq(\omega)$ be a real quadratic field and 
$$M=\zz\omega+\zz\cong \zz^2\subset K$$
be the lattice.  Then the hyperbolic matrix
$\sigma\in SL_2(\zz)$ determines the irrational number $\omega=\overline{[d_0, \cdots, d_{n-1}]}$. 
Let $\mathfrak{U}_M$ (rsp. $\UU_{M}^{+}$) be the group of positive (rsp. totally positive) units of 
$K$. Then $\sigma$ (which determines a cycle $(d_0, \cdots, d_{n-1})$) determines a totally positive unit in $\UU_{M}^{+}$.  We still denote this totally positive unit by $\sigma$, which 
generates a subgroup
$$\langle\sigma\rangle\subset \UU_M$$
with finite index. The $\sigma$ is the same as the element in $SL_2(\zz)$ in the beginning of Section \ref{subsec_Inoue-Hirzebruch}.

We explain how to add the cycles $D, D^\prime$ to $V^{o}$. From \cite[Page 302]{Pinkham}, for each $k\in \zz$, we take a $\cc^2$ given by coordinates 
$(u_k, v_k)$. We glue all the infinite $\cc^2$'s  (indexed by $k\in\zz$) by:
\begin{equation}\label{eqn_glue_coordinates}
\begin{cases}
u_{k+1}=u_k^{d_k}v_k;\\
v_{k+1}=\frac{1}{u_k}.
\end{cases}
\end{equation}
Let $A$ denote such a space.  Note that $D_k\cong \pp^1$ is given by $\{u_{k+1}=v_k=0\}$ and $D_k^{2}=-d_k$. 
The group $\langle\sigma\rangle$ acts on $A$ freely by:
$$\sigma(u_k, v_k)=(u_{k+n}, v_{k+n}).$$
Then we have an isomorphism:
$$\Phi: A-\bigcup_{k\in \zz}D_k\cong  U_{C}^\prime$$
given by:
\begin{equation}\label{eqn_glue_copies_C2}
\begin{cases}
2\pi i z_1=\omega \log u_0+\log v_0;\\
2\pi i z_2=\omega^\prime \log u_0+\log v_0
\end{cases}
\end{equation}
where $\omega=\overline{[d_0, \cdots, d_{n-1}]}$ is the irrational number which has a purely period modified fraction expansion, and 
$\omega^\prime$ is its conjugate.  (Here we also can identify $\zz^2$ (as a $\zz$-module) generated by $1,\omega$). 
Recall that $U_{C}^\prime:=\{z=x+iy\in \cc^2/M| y\in C\}$. 
We consider 
$\Phi^{-1}(\mathbb{H}\times \mathbb{H}/M)$.  Then  $\Phi$ is compatible with the action of $\langle\sigma\rangle$ on (\ref{eqn_glue_copies_C2}).   Gluing 
$A/\langle\sigma\rangle$ to $\mathbb{H}\times \mathbb{H}/M\rtimes \langle\sigma\rangle\subset U_C^\prime/\langle\sigma\rangle=V_C^{o}$
and we get a neighborhood of $D$ in $V$.  We denote this neighborhood by $V_C$.

We can do the similar to the cycle $D^\prime$. This is from the duality property of the cusp $p$ and its dual $p^\prime$.  Recall that $\omega=\overline{[d_0, \cdots, d_{n-1}]}$.
Let the modified continued fraction expansion of $1/\omega$ be
$$1/\omega=[f_1,\cdots, f_k,\overline{d^\prime_0, \cdots, d_{s-1}^\prime}].$$
We define the minimal period of $\omega$ as the least integer $k$ such that 
$d_{k+i}=d_{i}$ for any $i$ satisfying $1\le i\le n-k$.
Then from \cite[Deninition 1.6]{Nakamura}, 
$\omega^*=\overline{[d^\prime_0, \cdots, d^\prime_{s-1}]}$ is the dual irrational number of $\omega$, and $(d_0^\prime, \cdots, d_{s-1}^\prime)$ is the dual cycle of the dual cusp $p^\prime$. We also have 
$$\frac{n}{\text{minimal period of~}(d_0,\cdots, d_{n-1})}=
\frac{s}{\text{minimal period of~}(d^\prime_0,\cdots, d_{s-1}^\prime)}.$$

\begin{prop}(\cite[Lemma 4.4]{Nakamura})
Let $$\sigma=\mat{cc} 0&-1\\
1&d_{n-1}\rix \mat{cc} 0&-1\\
1&d_{n-2}\rix\cdots \mat{cc} 0&-1\\
1&d_0\rix$$
and 
$$\sigma^*=\mat{cc} 0&-1\\
1&d_{s-1}^\prime\rix \mat{cc} 0&-1\\
1&d^\prime_{s-2}\rix\cdots \mat{cc} 0&-1\\
1&d^\prime_0\rix$$
Then there exists an integral matrix $B$ such that $\det(B)=-1$ and 
$B\cdot \sigma=\sigma^*\cdot B$. In particular, 
$\det(\sigma-I)=\det(\sigma^*-I)$.
\end{prop}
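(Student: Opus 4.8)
The plan is to read off the determinant identity as a formal consequence of the conjugation $B\sigma=\sigma^*B$, and to extract the conjugation itself from the arithmetic of the real quadratic order attached to the cusp. Granting an invertible integral $B$ with $B\sigma=\sigma^*B$, one has $\sigma^*=B\sigma B^{-1}$, hence $\sigma^*-I=B(\sigma-I)B^{-1}$ and therefore $\det(\sigma^*-I)=\det(B)\det(\sigma-I)\det(B^{-1})=\det(\sigma-I)$; note this last step uses only that $B$ is invertible over $\zz$, not that $\det B=-1$. So the whole content of the proposition lies in producing $B$, and in pinning its determinant to $-1$.

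First I would interpret the two matrices intrinsically. Since $\overline{[d_0,\dots,d_{n-1}]}$ is purely periodic, $\omega$ is an eigen-slope of $\sigma$, and $\sigma$ is exactly the matrix of multiplication by the fundamental totally positive unit $\epsilon$ of the order $\sO=\zz[\epsilon]$ of discriminant $\tr(\sigma)^2-4$ in $K=\qq(\omega)$, written in the oriented $\zz$-basis $(\omega,1)$ of the lattice $M=\zz\omega+\zz$. The same description applies to $\sigma^*$ via $\omega^*$ and its lattice $M^*$: because $\omega^*$ is built from $1/\omega$ it generates the \emph{same} field $K$, and the dual cycle $(d'_0,\dots,d'_{s-1})$ corresponds to the same order $\sO$ and the same unit $\epsilon$ (up to replacing $\epsilon$ by $\epsilon^{-1}=\epsilon'$, which has the same trace). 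Consequently both $\sigma$ and $\sigma^*$ have trace $\tr_{K/\qq}(\epsilon)$, so $\tr\sigma=\tr\sigma^*$ and, since $\det(\,\cdot\,-I)=2-\tr(\,\cdot\,)$ for a $\mathrm{SL}_2$-matrix, the determinant identity already holds at the level of traces. This is the easy half, and it is worth isolating because it does not see the finer $\zz$-structure.

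Next I would build $B$ from Hirzebruch's duality between the two cusps. The dual cusp is cut out by the complementary cone $C'$, and its lattice $M^*$ is the trace dual (the inverse-different multiple) of $M$; multiplication by $\epsilon$ preserves $M^*$, so a $K$-linear identification $M\to M^*$ coming from this duality, written in the oriented integral bases, gives an integral matrix $B$ satisfying $B\sigma=\sigma^*B$. The point that this $B$ may be chosen with $\det B=-1$ is exactly the part that is \emph{not} forced by the trace equality, and it is where I expect the real work to sit: the integral intertwiners $\{B:B\sigma=\sigma^*B\}$ form a rank-two lattice on which $\det$ restricts to a binary quadratic form $Q$ of discriminant $\tr(\sigma)^2-4$, and the assertion $\det B=-1$ says precisely that $Q$ \emph{represents} $-1$. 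For the single-block cusp $(d_0)=(n)$, with dual cycle $(3,2,\dots,2)$, one computes $\sigma^*=\begin{pmatrix}3-n&5-2n\\ n-2&2n-3\end{pmatrix}$ and the connecting form $Q(p,r)=(n-2)p^2+3(n-2)pr+(2n-5)r^2$ of discriminant $n^2-4$; e.g. for $n=3$ the cusp is self-dual and $Q(1,-1)=-1$ yields $B=\begin{pmatrix}1&1\\-1&-2\end{pmatrix}$.

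The hard part will therefore be the representability of $-1$, which is genuinely arithmetic rather than formal: when the cusp is self-dual it amounts to $\epsilon$ having norm $-1$, and in general it is the content of Nakamura's Lemma 4.4, namely that the module connecting $M$ to $M^*$ is attached to the inverse different, so its norm form is ambiguous under the main involution and hence takes the value $-1$. I would either invoke this directly, or give a more hands-on proof by inducting on the Hirzebruch blocks (1.2.2)–(1.2.3): one checks that appending a block $(a_i+3,2^{b_i})$ on the $\sigma$-side together with its mirror $(b_i+3,2^{a_i})$ on the $\sigma^*$-side carries a determinant $-1$ intertwiner to another, with the single-block computation above as the base case. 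Either way, the substantive obstacle is the $\det B=-1$ normalization, not the bare conjugacy, and once it is in hand the equality $\det(\sigma-I)=\det(\sigma^*-I)$ is immediate.
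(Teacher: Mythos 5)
The paper offers no proof of this proposition: it is quoted with the citation to Nakamura's Lemma 4.4 and nothing more, so there is no in-paper argument to measure yours against. Judged on its own terms, your formal reductions are correct and well organized: given any integral $B$ with $B\sigma=\sigma^*B$ and $\det B=\pm 1$, conjugation gives $\sigma^*-I=B(\sigma-I)B^{-1}$ and hence the determinant identity; and you rightly note that for elements of $SL_2(\zz)$ this identity already follows from $\tr\sigma=\tr\sigma^*$, since $\det(A-I)=2-\tr A$, the trace equality being forced because both matrices represent multiplication by the same totally positive unit (up to inversion) on rank-two lattices in the same real quadratic field. Your single-block formula for $\sigma^*$ and the $n=3$ intertwiner both check out, and your identification of the intertwiner lattice with a binary quadratic form of discriminant $\tr(\sigma)^2-4$ is a sound framework.

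The gap sits exactly where you locate it, and you do not close it: the existence of an \emph{integral} intertwiner of determinant $-1$ is the entire content of the lemma, and neither of your two routes delivers it. Invoking Nakamura's Lemma 4.4 is circular, since that is the statement to be proved. The heuristic that the connecting form is ``ambiguous under the main involution and hence takes the value $-1$'' is false as a general principle --- $x^2-3y^2$ is ambiguous and does not represent $-1$ --- so it cannot carry the argument. And the induction on Hirzebruch blocks is only announced; it is not evident that appending $(a_i+3,2^{b_i})$ on one side and its mirror $(b_i+3,2^{a_i})$ on the other transports a fixed determinant $-1$ intertwiner, because the two monodromies change by non-commuting factors. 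The construction that actually works, and which is implicit in the paper's surrounding discussion of $1/\omega=[f_1,\dots,f_k,\overline{d'_0,\dots,d'_{s-1}}]$, is to track the modified continued fraction: $\omega^*$ is reached from $\omega$ by the single orientation-reversing move $\omega\mapsto 1/\omega$ (determinant $-1$) followed by determinant $+1$ shifts $\omega\mapsto 1/(d-\omega)$, and the composite fractional-linear transformation, written in the bases $(\omega,1)$ and $(\omega^*,1)$, is an integral matrix $B$ of determinant $-1$ that intertwines $\sigma$ and $\sigma^*$ because both act as multiplication by the same unit. Supplying that computation (or an honest citation of it in place of a proof) is what your write-up still needs.
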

Then we can use the same method to add the cycle $D^\prime$ and get the neighborhood  $V_{C^\prime}$.

\subsection{Finite group action on Inoue-Hirzebruch surfaces}\label{subsec_G_Inoue}

Let $\Aut(\bbV)$ be the automorphism group of the Inoue-Hirzebruch surface $\bbV$. 
Pinkham \cite{Pinkham}, and Prokhorov-Shramov \cite{PS} studied the automorphism group $\Aut(\bbV)$. 

We talk about  the $G$-action on the compact 
Inoue-Hirzebruch surface $V=V^{o}\cup\{D,D^\prime\}$ following \cite[\S 2]{Pinkham}.
Let 
$$\overline{M}:=(\sigma-1)^{-1}M$$
be a new $\zz$-module such that $\overline{M}\cong M$ since $(\sigma-1)\overline{M}=M$. Then from \cite[Theorem in \S 2]{Pinkham}, the full complex automorphism group 
of $V, \bbV$ is given by 
$$G(\overline{M}, \UU_M)/G(M, \langle\sigma\rangle)$$
such that our finite group $G\subset G(\overline{M}, \UU_M)/G(M, \langle\sigma\rangle)$. 
Here $G(\overline{M}, \UU_M)=\overline{M}\rtimes \UU_M$ and $G(M, \langle\sigma\rangle)=M\rtimes \langle\sigma\rangle$. Any finite group $G\subset G(\overline{M}, \UU_M)/G(M, \langle\sigma\rangle)$ must have the form 
$$G(N, W)/G(M, \langle\sigma\rangle)$$
for $N\subset \overline{M}$ and $W\subset \UU_M$.

\begin{prop}
    The part $\UU_M/\langle\sigma\rangle$ of the automorphism group naturally extends to  $V, \bbV$.  The quotient does not change the corresponding irrational number $\omega$. In particular, if $(d_0,\cdots, d_{n-1})$ is already a minimal period, then 
    the $\UU_M/\langle\sigma\rangle$ action on the Inoue surface is trivial.
\end{prop}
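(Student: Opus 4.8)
The plan is to trace the multiplicative action of $\UU_M$ through the toric charts and the compactification. Recall that $\UU_M$ acts on $\mathfrak{D}$ by multiplication via the two real embeddings of $K$, commuting with the translations by $M$; on the charts $A$ of (\ref{eqn_glue_coordinates}) a positive unit acts as an index shift $(u_k, v_k) \mapsto (u_{k+jm}, v_{k+jm})$, where $m$ denotes the minimal period and the generator $\tau$ of these shifts is the shift by one period, $j=1$. Under the isomorphism $\Phi$ this is precisely the restriction of the multiplication action, so it permutes the curves $D_k$ and extends across the cycles $D, D^\prime$. Since $\langle\sigma\rangle \subset \UU_M$ is the subgroup of full-period shifts (shift by $n$) and acts freely and properly discontinuously, the finite quotient $\UU_M/\langle\sigma\rangle$ descends to an action on $V^{o} = \mathfrak{D}/\langle\sigma\rangle$ that extends over the cusps to automorphisms of $V$ and $\bbV$. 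This establishes the first assertion.

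To see that the quotient leaves $\omega$ unchanged, I would use that the unit group of a real quadratic field is abelian, so each $u \in \UU_M$ commutes with the hyperbolic $\sigma$. As $\sigma$ has distinct eigenvalues $\lambda, 1/\lambda$ with eigenvectors $v_1, v_2$, any commuting $u$ is diagonal in the same eigenbasis; hence $u$ preserves the cones $C, C^\prime$ and fixes the eigendirections, that is the irrational numbers $\omega, \omega'$. Thus the periodic continued fraction attached to the quotient is again $\omega$ (only its period length is divided), and the quotient is an Inoue-Hirzebruch surface with the same associated irrational number.

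For the final assertion I would make explicit the dictionary between units and the continued fraction: the fundamental totally positive unit generating $\UU_M^+$ corresponds to exactly one minimal period, i.e. to the shift $\tau$ of length $m$. Since $(d_0, \dots, d_{n-1})$ repeats with period $m$, we have $\sigma = \tau^{n/m}$, so $\UU_M/\langle\sigma\rangle$ acts through the cyclic group $\langle\tau\rangle/\langle\tau^{n/m}\rangle \cong \zz/(n/m)\zz$ by the rotation $D_k \mapsto D_{k+m}$ of the cycle. When $(d_0, \dots, d_{n-1})$ is already a minimal period we have $n = m$, so $\sigma = \tau$, the quotient is trivial, and the $\UU_M/\langle\sigma\rangle$ action on the Inoue surface is trivial.

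The main obstacle is the dictionary in the third step: verifying that the automorphism induced by a unit is exactly the continued-fraction shift, and that the effectively acting part of $\UU_M$ is generated by $\tau$ so that the relevant index is $n/m$. This requires the precise correspondence between the units of $K$, the purely periodic modified continued fraction of $\omega$, and the shift automorphism of the cyclic curve configuration $D$, together with a check that the torsion unit $-1$ and any norm-$(-1)$ unit act trivially on $V^{o}$ (or merely interchange the two cusps $p, p^\prime$) and hence contribute nothing modulo $\langle\sigma\rangle$.
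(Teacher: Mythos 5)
Your proposal is correct in outline but takes a genuinely different route from the paper. The paper's proof is a one-line covering-space argument: since $\pi_1(V)=\zz$ (Inoue), for each $r\in\zz_{>0}$ there is a unique $r$-fold cyclic cover of $V$, again an Inoue surface whose cycle is the period repeated $r$ times; hence $\UU_M/\langle\sigma\rangle$ can only act by rotating the cycle through multiples of the minimal period, which leaves $\omega$ unchanged and is trivial when $(d_0,\dots,d_{n-1})$ is already a minimal period. You instead work directly with the multiplicative action of units: commutativity of the unit group forces every $u\in\UU_M$ to be diagonal in the eigenbasis of $\sigma$, hence to preserve the cones and the eigendirections (so $\omega$ is fixed), and on the charts $A$ the generator acts as the index shift by one minimal period, giving the rotation $D_k\mapsto D_{k+m}$ with quotient group $\zz/(n/m)\zz$. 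What your route buys is an explicit identification of which unit induces which rotation, and an independent verification (via the charts) that the action extends over the cycles to $V$ and $\bbV$, which the paper instead takes from Pinkham's description of $\Aut(\bbV)$ quoted just before the proposition. What it costs is exactly the point you flag: you must invoke the classical dictionary (Hirzebruch, Pinkham, Nakamura) identifying the fundamental totally positive unit of $\UU_M$ with the shift by precisely one minimal period, and dispose of the torsion and norm-$(-1)$ units; the paper's argument gets the ``rotation by a period'' structure for free from uniqueness of cyclic covers of a space with fundamental group $\zz$. Both arguments reach the same conclusion, and your missing step is a standard cited fact rather than a gap in the logic.
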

\begin{proof}
    This is from the fact that the fundamental group of of the Inoue surface $V$ is $\pi_1(V)=\zz$ in \cite[Proposition 4.1]{Inoue}.  Thus, for any $r\in \zz_{>0}$, there is a unique $r$-fold cover of $V$, which is still an Inoue surface but with the period $(d_0,\cdots, d_{n-1})$, and $(d_0^\prime,\cdots, d_{s-1}^\prime)$ rotating $r$-times  in cyclic orders. 
\end{proof}

For the part $\overline{M}/M$, for any  element $m\in \overline{M}/M$ representing an element in $G$, we have 
$m$ acts on $\hh\times \hh$ by 
$$m\cdot (z_1, z_2)=(z_1+m, z_2+m^\prime),$$
where $m^\prime$ is the conjugate of $m$.   By 
  equation (\ref{eqn_glue_copies_C2}), 
$m$ acts on the $0$-th coordinate chart of $A$ by 
\begin{equation}\label{eqn_action1}
m\cdot (u_0, v_0)=(e^{2\pi i m_1}u_0, e^{2\pi i m_2}v_0)
\end{equation}
where $m=m_1\omega+m_2$, $m_1, m_2\in \qq$,   and 
$\omega=\overline{[d_0, \cdots, d_n]}$, $M\cong M(\omega)$ which is generated by $1, \omega$.
By iterating equation  (\ref{eqn_glue_coordinates}), we get 
$$
\begin{cases}
u_{r}=u_0^{p_r}v_0^{q_r};\\
v_{r}=u_0^{-p_{r-1}}v_0^{-q_{r-1}}
\end{cases}
$$
where 
$$\mat{cc} p_r&q_r\\
-p_{r-1}&-q_{r-1}\rix =\mat{cc} d_{r-1}&1\\
-1&0\rix \mat{cc} d_{r-2}&1\\
-1&0\rix\cdots \mat{cc} d_0&1\\
-1&0\rix.$$
Set 
$$N:=\mat{cc} p_r&q_r\\
-p_{r-1}&-q_{r-1}\rix.$$
By \cite[Step II]{Pinkham}, $(\omega, 1)N=\sigma(\omega, 1)$.  So $m$ acts on the $r$-th coordinate chart of $A$ by
\begin{equation}\label{eqn_action2}
m\cdot (u_r, v_r)=(e^{2\pi i (p_r m_1+q_r m_2)}u_r, e^{2\pi i (-p_{r-1}m_1-q_{r-1} m_2)}v_r).
\end{equation}
Then the $\overline{M}$-action descends to the quotient $\overline{M}/M$ on $A/\langle\sigma\rangle$.  Thus, 
$G\subset G(\overline{M}, \UU_M)/G(M, \langle\sigma\rangle)$ acts on $V_C$. 
Although the quotient $V_C/G$ may not be the resolution of a cusp singularity, 
a resolution of the quotient singularities will give a cusp.

\begin{example}\label{example_cusp_1}
Consider the hypersurface cusp $\{x^3+y^3+z^5+xyz=0\}$ whose negative self-intersection sequence of the minimal resolution cycle is given by
$(5,2)$. One can directly calculate this cycle by taking two time blow-ups along the origin of the hypersurface. After the first blow-up, we get a rational $A_1$-nodal curve as the exceptional divisor; then blowing up along the $A_1$-singularity, we get the minimal resolution of the cusp.    
In this case, from \cite[Example]{Pinkham},  one possible choice for $\omega = \overline{[2,5]}= 1 + \sqrt{15}/5$,$\sigma = 4 + \sqrt{15}$ and $\sigma-1$ has norm $-6$.  

The cycle $(5,2)$ is a minimal period, and hence $\UU_M/\sigma$ acts trivially. 
$\overline{M}/M\cong \mu_6$. 
The cyclic group $\mu_6=\langle\zeta\rangle$ acts on the cusp by:
$$x\mapsto \zeta x; \quad y\mapsto \zeta^5 y; \quad z\mapsto \zeta^3 z.$$
Then from \cite[Example on Page 308]{Pinkham} the quotient is given by the hypersurface cusp
$\{x^2+y^4+z^7+xyz=0\}$. One can calculate  the negative self-intersection sequence of the minimal resolution cycle of this cusp which  is given by  $(4,2,2)$, see Figure \ref{fig-cusp1-config}. 

\begin{figure}[htbp]
  \centering
  \includegraphics[width=50mm]{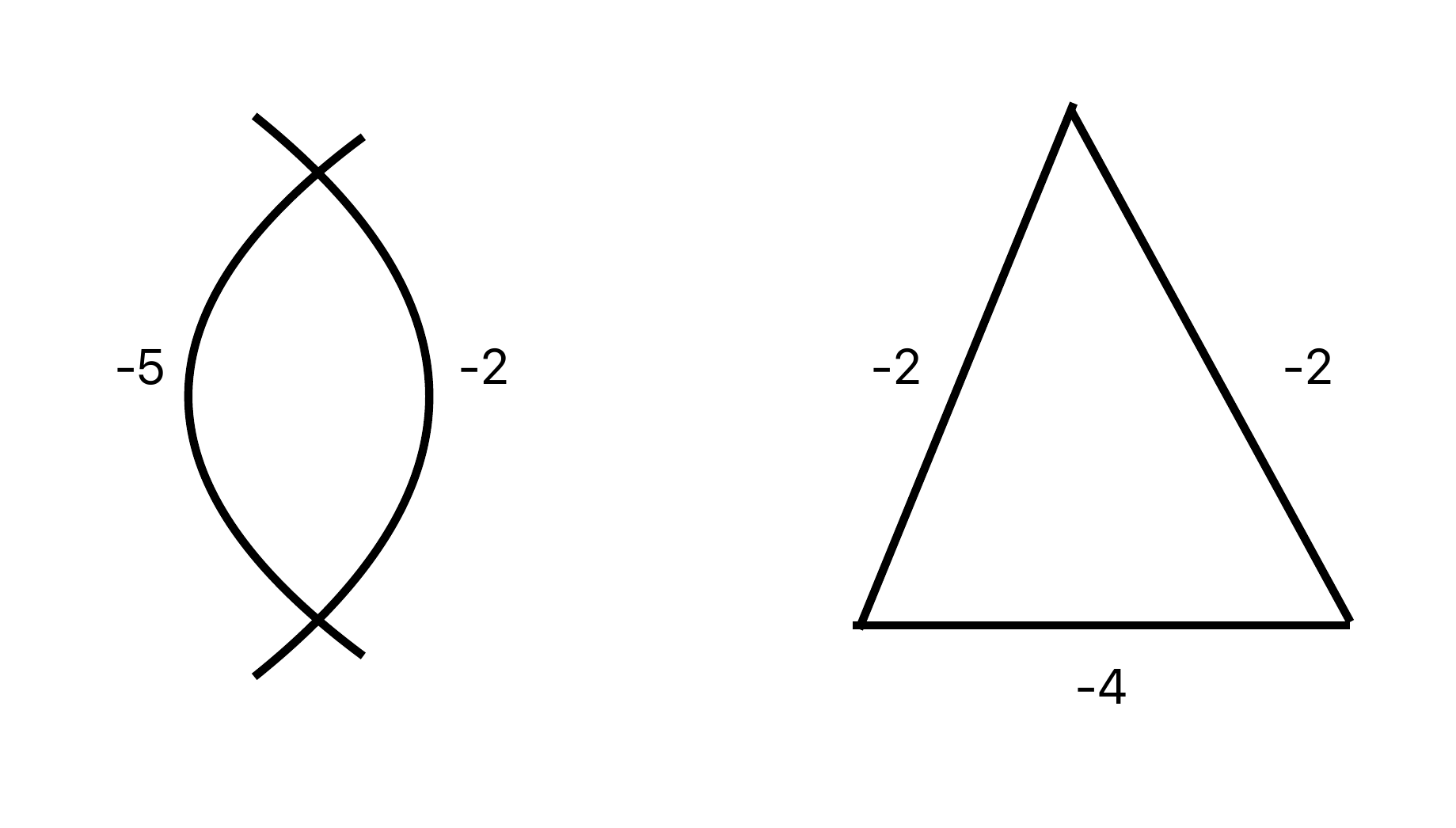}      
  \caption{cusp $(5,2)$ and its dual}
  \label{fig-cusp1-config}
\end{figure}

To get the minimal resolution $(4,2,2)$,  under the $\mu_6$-action on the exceptional divisor $(5,2)$ with two fixed points the corners, we take resolution and get the resolution in    Figure \ref{fig-cusp3-config}.  Then blowing down the $(-1)$-curve we get the minimal resolution $(4,2,2)$.

\begin{figure}[htbp]
  \centering
  \includegraphics[width=40mm]{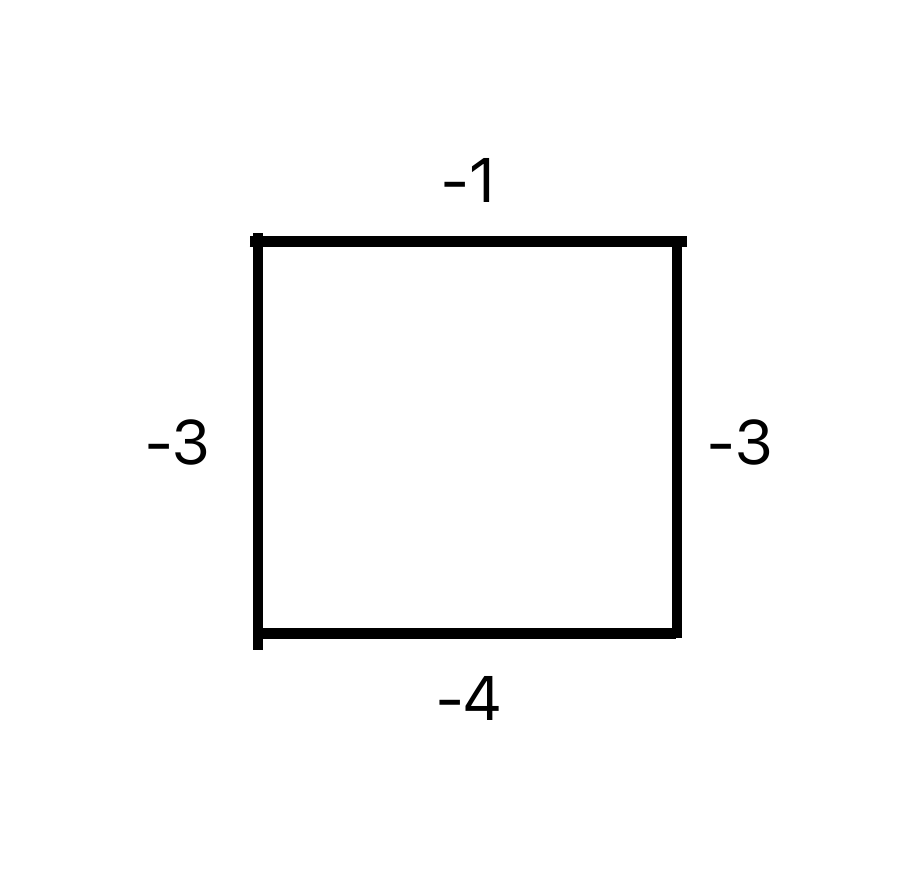}      
  \caption{resolution $(-4,-3,-3,-1)$}
  \label{fig-cusp3-config}
\end{figure}

Let  $\mu_3=\langle\zeta^2\rangle$, and it acts on the cusp $\{x^3+y^3+z^5+xyz=0\}$ in an obvious way.  The quotient gives the cusp 
whose  minimal resolution cycle has negative self-intersection sequence  $(3, 2,2,2,2,2)$, see Figure \ref{fig-cusp2-config}.

\begin{figure}[htbp]
  \centering
  \includegraphics[width=40mm]{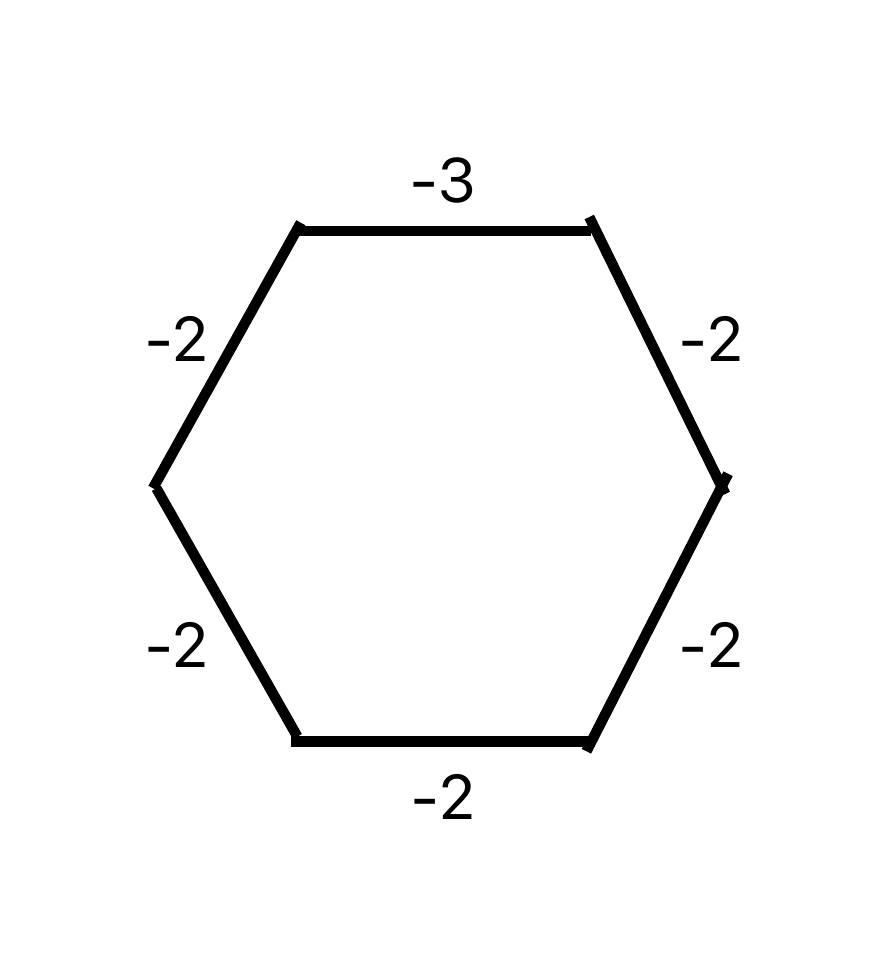}      
  \caption{cusp $(3,2,2,2,2,2)$}
  \label{fig-cusp2-config}
\end{figure}

Similarly, let $\mu_2=\langle\zeta^3\rangle$ be the order $2$ cyclic group.  Its action on the above cusp has the quotient which is the cusp whose minimal resolution cycle has 
negative self-intersection sequence $(2+6)$. This is a complete intersection cusp whose local equation is given by 
$$x^2+w^2=yz, \quad  y^2+z^3=xw$$
from \cite[lemma 2.5]{Nakamura}.

So the self-intersection of the rational nodal curve is $-6$. One can see this from the action of $\mu_2$ on the divisor exceptional divisor $(5,2)$ in (\ref{eqn_action1}) and (\ref{eqn_action2}). The quotient of $V_C$ (which is a neighborhood of the exceptional divisor $(5,2)$) by $\mu_2$ makes the divisors into $(10, 1)$, see Figure \ref{fig-cusp4-config}.  This contains two rational $\pp^1$'s intersecting in two points, and one with self-intersection number $-10$, and the other with  self-intersection number $-1$.  Then we contract the $-1$-curve and get a rational nodal curve with self-intersection number $-6$.   

Here is another way to see the resolution cycle.  Let $(V,0)$ be the germ of the cusp $\{x^3+y^3+z^5+xyz=0\}$. $(V,0)/\mu_2$ is the complete intersection cusp above.  Then let $Bl_{0}(V)$ be the blow up along the origin, then it has an exceptional curve which is a rational nodal curve with $A_1$-singularity at the node and self-intersection number $-3$.  Since $Bl_{0}(V/\mu_2)\cong Bl_{0}(V)/\mu_2$, then $Bl_{0}(V)/\mu_2$ gives the rational nodal curve with self-intersection $-6$. 

\begin{figure}[htbp]
  \centering
  \includegraphics[width=50mm]{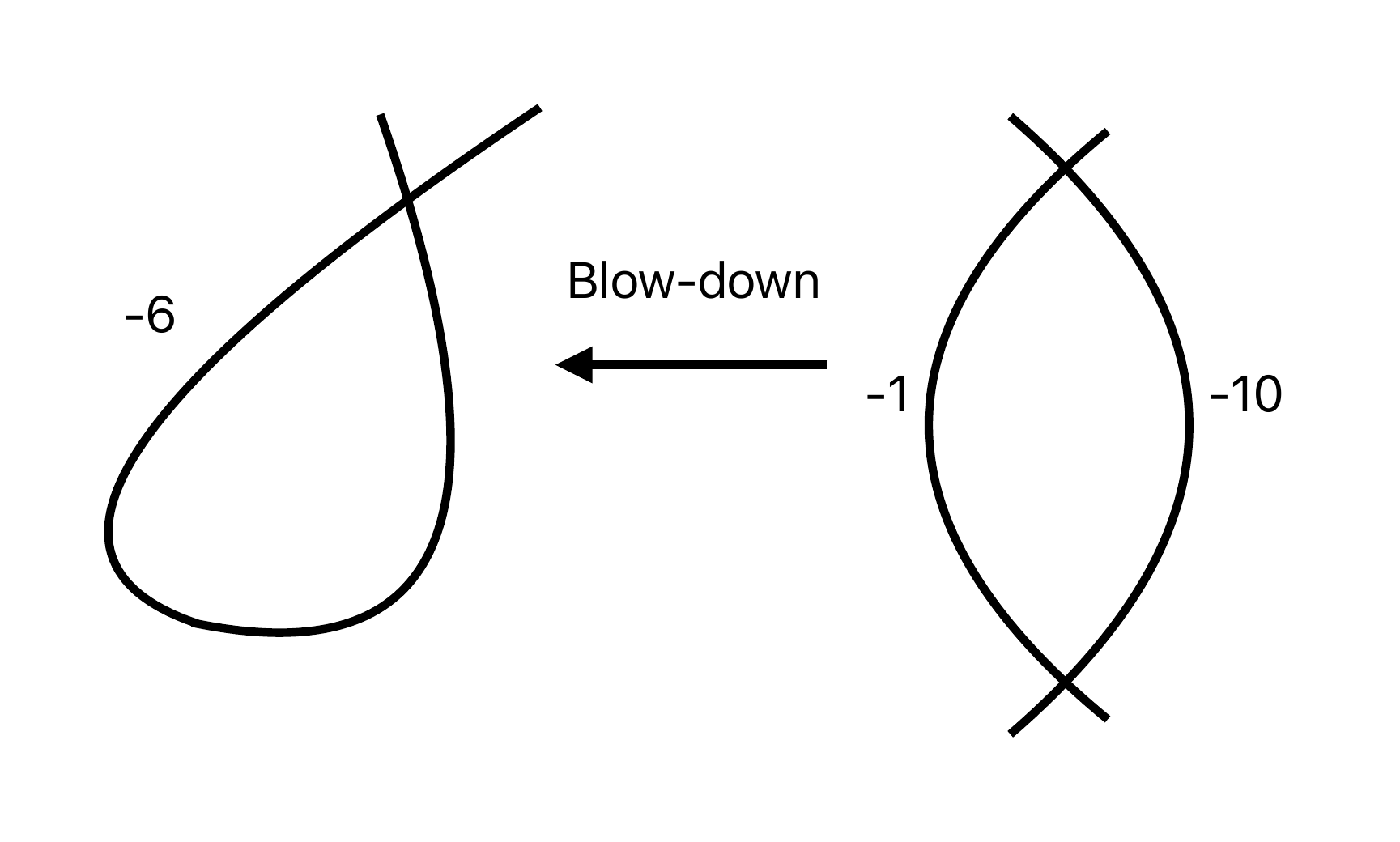}      
  \caption{blow-down process}
  \label{fig-cusp4-config}
\end{figure}
\end{example}

\section{Equivariant anticanonical pairs}\label{sec_Looijenga_pair}

\subsection{Looijenga pairs}\label{subsec_Looijenga_Pair}

We introduce basic knowledge of  Looijenga pairs.  More details can be found in \cite{Friedman}. 

\begin{defn}\label{defn_Looijenga_pair}
A Looijenga pair $(Y,D)$ is given by a smooth rational surface $Y$, together with a connected singular nodal curve $D\in |-K_Y|$. 
\end{defn}

Since the arithmetic genus $p_a(D)=1$, $D$ is either an irreducible rational nodal curve, or a cycle of smooth rational curves.  For such a $D$, we have 
$H_1(D,\zz)=\zz$.  Thus fixing a generator of $H_1(D,\zz)$ gives an orientation on $D$, and we label $D$ as 
$D=D_0+\cdots+D_{n-1}$.  The length of $D$ is $\ell(D)=n$.  
We call $D$ negative-definite, if the intersection matrix $[D_i\cdot D_j]$ is negative-definite. 

Similar to (\ref{eqn_self_intersection}), we define 
$$
d_i=
\begin{cases}
-D_i^2 & \text{~if~}n>0\\
2-D_i^2 & \text{~if~}n=0.
\end{cases}
$$
Then the negative self-intersection sequence is $\mathbf{d}=(d_0, \cdots, d_{n-1})$.

\begin{defn}\label{defn_charge_pair}
The charge of a Looijenga pair $Q(Y,D)$ is defined by the formula:
$$Q(Y,D):=12+\sum_{i=1}^{n-1}(d_i-3)=12+\sum_{i=1}^{n-1}(a_i-b_i).$$
\end{defn} 

Let $D^\prime$ be the dual cycle of $D$, then $Q(Y,D^\prime)$ is given by interchanging the $a_i$ with $b_i$ from (\ref{eqn_cycle_D}) 
and (\ref{eqn_cycle_D_prime}). We have 
$Q(Y,D)+Q(Y,D^\prime)=24$.  
A Looijenga pair $(Y, D)$ is called a toric pair if $Y$ is a toric variety, and $D$ is its boundary divisor.

There are good properties of the charge $Q(Y,D)$ for a Looijenga pair $(Y,D)$, which is related to the geometry and topology property of the pair.  We only mention some useful ones, and more properties can be found in
 \cite{Friedman2}, \cite[\S 4]{FM}.
If $D$ is negative-definite,  then $Q(Y, D)\ge 3$.  
Also 
\begin{prop}\label{prop_pair_Euler}(\cite[Lemma 1.2]{Friedman2})
For a Looijenga pair $(Y,D)$,  the charge 
$$Q(Y,D)=\chi_{\top}(Y-D)$$
\end{prop}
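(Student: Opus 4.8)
The plan is to compute both sides directly and compare. First I would invoke additivity of the topological Euler characteristic over the complex-algebraic decomposition $Y = (Y-D) \sqcup D$: since $\chi_{\top}$ agrees with the compactly supported Euler characteristic for complex varieties, and the latter is additive over locally closed strata, we get
$$\chi_{\top}(Y-D) = \chi_{\top}(Y) - \chi_{\top}(D).$$
So it suffices to evaluate the two closed-variety Euler characteristics and check that their difference equals $12 + \sum_i (d_i - 3)$.

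For $\chi_{\top}(Y)$ I would use Noether's formula on the smooth rational surface $Y$. Since $Y$ is rational, $\chi(\sO_Y) = 1$, so $12 \chi(\sO_Y) = K_Y^2 + c_2(Y)$ together with $c_2(Y) = \chi_{\top}(Y)$ gives $\chi_{\top}(Y) = 12 - K_Y^2$. Because $D \in |-K_Y|$ we have $K_Y = -D$, hence $K_Y^2 = D^2$. Writing $D = D_0 + \cdots + D_{n-1}$ and expanding, $D^2 = \sum_i D_i^2 + 2\sum_{i<j} D_i\cdot D_j$; in a cycle of length $n \geq 2$ the off-diagonal intersection numbers total exactly $n$ (for $n \geq 3$ each of the $n$ cyclically adjacent pairs meets once, and for $n=2$ the two components meet in two points), so $D^2 = \sum_i D_i^2 + 2n = 2n - \sum_i d_i$ using $d_i = -D_i^2$. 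For $\chi_{\top}(D)$ I would pass to the normalization $\tilde D \cong \bigsqcup_i \pp^1$, which has $\chi_{\top}(\tilde D) = 2n$, and observe that $D$ is recovered by gluing the $2n$ preimages of the $n$ nodes in pairs, each identification dropping $\chi_{\top}$ by one; hence $\chi_{\top}(D) = 2n - n = n$.

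Substituting, $\chi_{\top}(Y-D) = (12 - D^2) - \chi_{\top}(D) = 12 - (2n - \sum_i d_i) - n = 12 - 3n + \sum_i d_i = 12 + \sum_i (d_i - 3) = Q(Y,D)$, which is the assertion. The one place needing genuine care, and which I expect to be the main obstacle, is the degenerate length-one case in which $D$ is an irreducible rational nodal curve: there $D^2 = D_0^2$ with no cross terms, while $\chi_{\top}(D) = 2 - 1 = 1$, so the node-bookkeeping differs from the cyclic case. This is exactly why the self-intersection convention sets $d_0 = 2 - D_0^2$ in that case; one checks $12 + (d_0 - 3) = 11 - D_0^2 = \chi_{\top}(Y) - \chi_{\top}(D)$, so the modified convention restores the identity. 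I would therefore present the cyclic case $n \geq 2$ as the main computation and dispatch the irreducible case as a short separate verification.
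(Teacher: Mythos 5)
Your computation is correct, and it is essentially the standard argument: the paper gives no proof of its own here, citing \cite[Lemma 1.2]{Friedman2}, and Friedman's proof is exactly this combination of additivity of the compactly supported Euler characteristic, Noether's formula $\chi_{\top}(Y)=12-K_Y^2$ on the rational surface, the expansion $D^2=2n-\sum_i d_i$ for the cycle, and $\chi_{\top}(D)=n$; your separate check of the irreducible nodal case, where the convention $d_0=2-D_0^2$ is precisely what makes the identity persist, is the right way to handle the one genuinely delicate point. (Note the paper's displayed definition of $Q(Y,D)$ has the sum running from $i=1$ to $n-1$; this is a typo for $i=0$ to $n-1$, and you have implicitly, and correctly, used the latter.)
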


The charge of the Looijenga pair measures how far the pair is being a toric pair.  The above result implies that 
\begin{prop}\label{prop_toric_model}(\cite[Lemma 2.7]{Friedman2})
If $(Y,D)$ is an anticanonical pair, then $Q(Y,D) \ge 0$ and $(Y,D)$ is toric if and only if $Q(Y,D) =0$.
\end{prop}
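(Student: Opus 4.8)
The plan is to deduce both assertions from the identity $Q(Y,D)=\chi_{\top}(Y-D)$ of Proposition \ref{prop_pair_Euler} together with the existence of a toric model. I would first dispose of the base case: if $(Y,D)$ is toric with $D$ the toric boundary, then $Y-D\cong(\cc^*)^2$, so $\chi_{\top}(Y-D)=0$ and hence $Q(Y,D)=0$. This already yields the implication ``$(Y,D)$ toric $\Rightarrow Q=0$''.

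Next I would analyze how the charge behaves under the two elementary birational modifications of an anticanonical pair, reading $Q$ through $\chi_{\top}(Y-D)$. For a \emph{corner} blow-up, i.e.\ the blow-up of a node of $D$ with the exceptional curve entering the boundary, the open surface $Y-D$ is left unchanged up to isomorphism, so $Q$ is unchanged. For a \emph{non-corner} blow-up at a smooth point $p$ of $D$, where the new boundary is the proper transform $\tilde D$, one has $\chi_{\top}(\tilde Y)=\chi_{\top}(Y)+1$ while $\tilde D\cong D$ as an abstract nodal curve, whence $\chi_{\top}(\tilde Y-\tilde D)=\chi_{\top}(Y-D)+1$; thus $Q$ increases by exactly $1$. (Equivalently, one can verify these two increment rules directly from the combinatorial definition of $Q$ in terms of the $d_i$, which provides a useful cross-check.)

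I would then invoke the toric model theorem for anticanonical pairs (Gross--Hacking--Keel, Friedman): after finitely many corner blow-ups $(\tilde Y,\tilde D)\to(Y,D)$ there is a birational morphism $(\tilde Y,\tilde D)\to(\bar Y,\bar D)$ onto a toric pair which is a composition of $k$ non-corner blow-ups. Combining the base case with the two increment rules gives $Q(Y,D)=Q(\tilde Y,\tilde D)=Q(\bar Y,\bar D)+k=k\ge 0$, which proves the bound. For the remaining equivalence, if $Q(Y,D)=0$ then necessarily $k=0$, so the toric model coincides with $(\tilde Y,\tilde D)$ and the latter is toric; since $(\tilde Y,\tilde D)$ is obtained from $(Y,D)$ by corner blow-ups and a corner blow-down of a toric pair is again toric, $(Y,D)$ itself is toric.

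The technical heart of the argument is the toric-model input, and the delicate point in the write-up is the bookkeeping that makes $k=Q(Y,D)$ exact: one must check carefully that corner blow-ups preserve both $Y-D$ (hence $Q$) and toricness, so that the number of non-corner blow-ups in the model is precisely $Q(Y,D)$ and, in particular, $Q(Y,D)=0$ leaves no room for a non-corner blow-up. I expect this bookkeeping, rather than any single computation, to be the main obstacle.
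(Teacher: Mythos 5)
Your proof is correct. Note first that the paper does not actually prove this statement: it is quoted verbatim from Friedman (\cite[Lemma 2.7]{Friedman2}), so there is no in-paper argument to match against. Your route --- read $Q$ as $\chi_{\top}(Y-D)$ via Proposition \ref{prop_pair_Euler}, check that a toric pair has $Y-D\cong(\cc^*)^2$, verify the two increment rules (corner blow-ups leave $Y-D$ and hence $Q$ unchanged, interior blow-ups at a smooth boundary point adjoin an $\aaa^1$ and raise $Q$ by $1$, consistent with Proposition \ref{prop_internal_corner_blow-up_charge}), and then feed in the toric model theorem of Gross--Hacking--Keel to get $Q=k\ge 0$ with $k$ the number of interior blow-ups --- is complete, and the final step (a corner blow-down of a toric pair is again toric, being the removal of a ray from the fan) is handled correctly. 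Friedman's own argument is the same bookkeeping run in the opposite direction: instead of ascending to a toric model he descends to a minimal anticanonical pair, invokes the classification (Proposition \ref{prop_Friedman_pair} here), observes that every minimal pair has $Q\ge 0$ with equality exactly in the toric cases, and concludes since interior blow-ups strictly increase $Q$. The two arguments are logically interchangeable; yours outsources the hard geometric input to the toric model theorem, while Friedman's outsources it to the classification of minimal pairs, which is more self-contained but requires the case analysis. Either way the delicate point you flag --- that $Q(Y,D)$ counts exactly the interior blow-ups in the model --- is precisely the content of the increment rules, and you have verified it.
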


From \cite[Definition (4.4)]{FM}, suppose that $D$ represents a cusp, meaning that it is the resolution cycle of a cusp singularity,  then we say that $D$ is rational if there exists a Looijenga pair $(Y, D)$ so that $D$ is the  anti-canonical divisor of  the rational surface $Y$.  If $D$ is a cusp such that the dual $D^\prime$ to $D$ is rational, we say that $D$ has a rational dual. 

\begin{prop}\label{prop_rational_dual} (\cite[Theorem (4.5)]{FM})
If $D$ has a rational dual, then the charge $Q(D)\leq 21$. 

Conversely,  if $D$ is a cusp and $\ell(D)\le 3$, $Q(D)\le 21$, then $D$ has a rational dual except in the following cases: $(4,11), (7,8), (2,4,12), (2,8,8)$, 
$(3,3,12), (3,4,11), (3,7,8),
(4,4,10), (4,6,8), (4,7,7), (5,5,8)$. 
\end{prop}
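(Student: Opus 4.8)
I would dispose of the ``only if'' statement first, since it is immediate from the charge machinery already in place. Suppose $D$ has a rational dual, so that the dual cycle $D'$ occurs as the anticanonical divisor of a Looijenga pair $(Y, D')$ with $Y$ rational. As $D'$ is the resolution cycle of a cusp, it is negative-definite, and the bound $Q(Y,D')\ge 3$ for negative-definite pairs recorded above Proposition \ref{prop_pair_Euler} applies. The charge depends only on the cycle (Definition \ref{defn_charge_pair}), so $Q(D')=Q(Y,D')\ge 3$; combining this with the duality relation $Q(D)+Q(D')=24$ gives $Q(D)\le 21$. This argument uses nothing about $\ell(D)$.

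\textbf{Reformulating the converse.} For the ``conversely'' part I would first recast the conclusion as a realizability question. By Theorem \ref{thm_Looijenga_conjecture}, the statement that $D$ has a rational dual (that $D'$ is anticanonical in a rational surface) is equivalent to producing a single Looijenga pair $(Y,D')$ with $Y$ rational, and also to the smoothability of the cusp with cycle $D$. The hypothesis $Q(D)\le 21$ becomes, through $Q(D)+Q(D')=24$, the condition $Q(D')\ge 3$, precisely the minimal charge such a pair can have. A direct computation of the charge on the eleven listed cycles shows that every one of them is \emph{extremal}, in the sense that its dual realizes the boundary value $Q(D')=3$. The plan is therefore to show that every length-$\le 3$ cusp with $Q(D')\ge 4$ has a rational dual unconditionally, and that among the extremal cycles with $Q(D')=3$ the only failures are the eleven in the list.

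\textbf{Construction side.} To realize $D'$ I would build $(Y,D')$ from a toric pair by blow-ups, controlling the charge through Proposition \ref{prop_pair_Euler} and the equality with $\chi_{\top}(Y\setminus D')$. A corner blow-up (at a node of the boundary) inserts a $(-1)$-component and leaves the charge unchanged while lengthening the cycle, whereas an interior blow-up (at a smooth point of a boundary component) raises the charge by one and fixes the length; a toric pair has charge $0$ by Proposition \ref{prop_toric_model}. Because $\ell(D)\le 3$, for each fixed value of $Q(D)$ there are only finitely many admissible short cycles $D$ (sequences of at most three integers $\ge 2$ with prescribed sum), hence finitely many dual cycles $D'$ to realize, each of the shape dictated by (\ref{eqn_cycle_D})--(\ref{eqn_cycle_D_prime}): at most three components of self-intersection $\le -3$ separated by chains of $(-2)$-curves. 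For each such shape with $Q(D')\ge 4$, and for all but finitely many with $Q(D')=3$, I would exhibit an explicit blow-up sequence from $\pp^2$ or a Hirzebruch surface producing $D'$; since blow-ups of rational surfaces are rational, this gives the required pair $(Y,D')$.

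\textbf{Main obstacle.} The crux is the negative side of the extremal case: proving that the eleven cycles with $Q(D')=3$ admit no rational anticanonical realization at all. Here the constructive method yields nothing and one needs an genuine obstruction. A charge-$3$ pair satisfies $\chi_{\top}(Y\setminus D')=3$, so $Y$ is a highly constrained blow-up of a toric surface, and one must verify that no such configuration can have boundary cycle equal to the prescribed $D'$---equivalently, via Theorem \ref{thm_Looijenga_conjecture}, that the associated length-$\le 3$ cusp is non-smoothable. I expect this to require a lattice- or deformation-theoretic analysis (of the possible exceptional configurations on a minimal-charge pair, or of the obstruction space of the cusp in the spirit of \cite{FM}) rather than any formal consequence of the charge inequality, and this exclusion of exactly the eleven boundary cycles is the delicate heart of the statement.
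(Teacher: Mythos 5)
The paper does not prove this proposition at all: it is imported verbatim as \cite[Theorem (4.5)]{FM}, so there is no internal argument to compare against. Judged on its own terms, your proposal splits cleanly. The forward direction is correct and complete: $D$ having a rational dual means $D'$ is anticanonical on a rational surface, $D'$ is negative-definite because it is itself a cusp cycle, so $Q(D')\ge 3$ by the inequality quoted before Proposition \ref{prop_pair_Euler}, and $Q(D)=24-Q(D')\le 21$. Your observation that all eleven excluded cycles have charge exactly $21$ (dual charge exactly $3$) is also correct and is a genuinely useful structural remark.

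The converse, however, is a plan rather than a proof, and the gap is exactly where you locate it. Two things are missing. First, the positive half --- that every length-$\le 3$ cycle with $Q(D)\le 21$ outside the list has a rational dual --- is asserted to follow from ``an explicit blow-up sequence from $\pp^2$ or a Hirzebruch surface,'' but no such sequences are exhibited; since the claim is a finite but nontrivial case check (the dual cycles $D'$ can be long, e.g.\ the dual of $(9,6)$ has length $11$), the constructions are the content, not a formality. Second, and more seriously, the exclusion of the eleven extremal cycles is not argued at all: you correctly note that no charge count can do it, since other charge-$21$ cycles \emph{are} realizable, and you defer to an unspecified ``lattice- or deformation-theoretic analysis.'' In Friedman--Miranda this is precisely the hard part --- one must show that a charge-$3$ negative-definite anticanonical pair with the prescribed dual cycle cannot exist on any rational surface, which requires a genuine obstruction (an analysis of the possible configurations on near-toric pairs, or equivalently a lattice-embedding obstruction), and nothing in your proposal supplies it. So the proposal correctly identifies the architecture of the theorem but proves only the easy half.
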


\subsection{Corner blow-ups and internal blow-ups}\label{subsec_corner_internal}

We introduce corner blow-ups and internal blow-ups for the Looijenga pairs. 
For a Looijenga pair $(Y,D)$, we can contract  an exceptional curve $E$ and get  a new Looijenga pair
$$\pi: (Y, D)\to (\overline{Y}, \overline{D}).$$
If $E\subset D$ is a component, then $E$ contracts to a node of the cycle $\overline{D}$. We call this type blow-up the corner blow-up. 
If $E\not\subset D$, then $E$ meets with $D$ at a smooth point of $D$. Then in this case $E$ contracts to a smooth point of the cycle $\overline{D}$. We call this type of blow-up the internal blow-up. 

\begin{prop}\label{prop_internal_corner_blow-up_charge}(\cite[Lemma 2.2]{Friedman2})
If there is a Looijenga pair $(Y, D)$ such that the negative self-intersection sequence is 
$(d_0, \cdots, d_{n-1})$ and the charge is $Q(Y, D)$, then we have 
\begin{enumerate}
\item  Let $\widetilde{Y}\to Y$ be an internal blow-up at the point 
$p\in D_i^{\circ}$ (the interior part of $D_i$),  then $\ell(\widetilde{D})=\ell(D)$, and, under the natural labeling of $\widetilde{D}$, the negative self-intersection sequence of 
$(\widetilde{Y}, \widetilde{D})$ is $(d_0, \cdots, d_{i-1}, d_i+1, d_{i+1},\cdots, d_{n-1})$.
\item If  $\widetilde{Y}\to Y$ is a corner blow-up of $Y$ at the point $p\in D_i\cap D_{i+1}$, then 
$\ell(\widetilde{D})=\ell(D)+1$.  If $\ell(D)=1$, i.e., $D$ is irreducible, then the negative self-intersection sequence of $(\widetilde{Y}, \widetilde{D})$ is
$(d_1+4, 1)$.  If $\ell(D)\ge 2$, and for an appropriate labeling of the components of $\widetilde{D}$,   the negative self-intersection sequence of $(\widetilde{Y}, \widetilde{D})$ is
$$(d_0, \cdots, d_{i}+1, 1, d_{i+1}+1, \cdots, d_{n-1}).$$ 
\item  If $\widetilde{Y}\to Y$ is an internal blow-up of $Y$,  then $Q(\widetilde{Y}, \widetilde{D})=Q(Y, D)+1$; and, 
if $\widetilde{Y}\to Y$ is a corner blow-up of $Y$,  then $Q(\widetilde{Y}, \widetilde{D})=Q(Y, D)$. 
\end{enumerate}
\end{prop}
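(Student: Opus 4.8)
The plan is to reduce everything to the intersection theory of a single point blow-up $\pi\colon \widetilde{Y}\to Y$ and then to track the anticanonical cycle. I would first assemble the standard toolkit: the exceptional curve $E\cong\pp^1$ has $E^2=-1$; the projection formula gives $\pi^*\alpha\cdot\pi^*\beta=\alpha\cdot\beta$ and $\pi^*\alpha\cdot E=0$; one has $K_{\widetilde{Y}}=\pi^*K_Y+E$; and for a curve $C$ passing through $p$ with multiplicity $m_p(C)$ the strict transform satisfies $\widetilde{C}=\pi^*C-m_p(C)E$. The key preliminary step is to pin down the new anticanonical cycle from $-K_{\widetilde{Y}}=\pi^*(-K_Y)-E=\pi^*D-E$, whose shape is governed by $m_p(D)$. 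For an internal blow-up $p\in D_i^{\circ}$ the curve $D$ is smooth at $p$, so $m_p(D)=1$, $\pi^*D=\widetilde{D}^{\mathrm{str}}+E$, and hence $-K_{\widetilde{Y}}=\widetilde{D}^{\mathrm{str}}$; in particular $E\not\subset\widetilde{D}$ and $\ell(\widetilde{D})=\ell(D)$. For a corner blow-up $p\in D_i\cap D_{i+1}$ the node gives $m_p(D)=2$, so $\pi^*D=\widetilde{D}^{\mathrm{str}}+2E$ and $-K_{\widetilde{Y}}=\widetilde{D}^{\mathrm{str}}+E$; thus $E$ enters $\widetilde{D}$ with coefficient one, as a new component inserted between the strict transforms of $D_i$ and $D_{i+1}$, giving $\ell(\widetilde{D})=\ell(D)+1$.

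Next I would read off the self-intersection sequences from $\widetilde{C}^2=(\pi^*C-m_p(C)E)^2=C^2-m_p(C)^2$. For an internal blow-up only $D_i$ passes through $p$, so $\widetilde{D}_i^2=D_i^2-1$, i.e. $d_i\mapsto d_i+1$ with all other entries unchanged; this is statement (1). For a corner blow-up with $\ell(D)\ge 2$, both $D_i$ and $D_{i+1}$ pass through $p$ with multiplicity one, so each of $d_i,d_{i+1}$ increases by one, while the inserted component $E$ has $E^2=-1$ and hence self-intersection entry $1$, yielding $(d_0,\dots,d_i+1,1,d_{i+1}+1,\dots,d_{n-1})$. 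The irreducible case $\ell(D)=1$ is the one delicate point: the single nodal component has $m_p(D_0)=2$, so $\widetilde{D}_0^2=D_0^2-4$, and one must pass through the length-one convention (\ref{eqn_self_intersection}) to express the answer as the claimed sequence of the now length-two cycle $\widetilde{D}_0+E$.

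For statement (3) I would give two independent checks. Directly from $Q(Y,D)=12+\sum_i(d_i-3)$: an internal blow-up changes exactly one entry by $+1$, so $Q\mapsto Q+1$; a corner blow-up of length $\ge 2$ raises two entries by $+1$ and adjoins a new entry $d_E=1$ contributing $1-3=-2$, for net change $1+1-2=0$, so $Q$ is unchanged. Independently, and more robustly across the length-one case, I would invoke Proposition \ref{prop_pair_Euler}, $Q=\chi_{\top}(Y-D)$. Away from $E$ the map $\pi$ is an isomorphism carrying $\widetilde{D}^{\mathrm{str}}$ to $D$ and identifying $\widetilde{Y}\setminus(E\cup\widetilde{D}^{\mathrm{str}})$ with $Y\setminus D$. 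In the internal case $E\not\subset\widetilde{D}$ and $E$ meets $\widetilde{D}$ in one point, so $\widetilde{Y}\setminus\widetilde{D}$ is the disjoint union of a copy of $Y\setminus D$ with $E$ minus a point ($\cong\aaa^1$, Euler characteristic $1$), giving $\chi_{\top}(\widetilde{Y}\setminus\widetilde{D})=\chi_{\top}(Y\setminus D)+1$. In the corner case $E\subset\widetilde{D}$, so $\widetilde{Y}\setminus\widetilde{D}=\widetilde{Y}\setminus(E\cup\widetilde{D}^{\mathrm{str}})\cong Y\setminus D$ and the charge is unchanged; this argument settles the irreducible corner case uniformly.

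The main obstacle is not any single computation but the bookkeeping in the length-one corner blow-up: one must account for the node contributing multiplicity two, insert $E$ with the correct coefficient coming from the $-K$ twist rather than from $\pi^*D$, and then reconcile the resulting self-intersection with the length-one convention in (\ref{eqn_self_intersection}). Getting the coefficient of $E$ in $\widetilde{D}$ right—namely $1$, from $-K_{\widetilde{Y}}=\pi^*D-E=\widetilde{D}^{\mathrm{str}}+E$—is the crux that makes the new cycle genuinely anticanonical and reduced. Once that is fixed, parts (1)–(3) are routine, and I would lean on the Euler-characteristic form of the charge to avoid convention pitfalls in the irreducible case.
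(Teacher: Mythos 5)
The paper offers no proof of this proposition: it is quoted verbatim from Friedman (\cite[Lemma 2.2]{Friedman2}) and used as a black box, so there is nothing internal to compare your argument against. Your argument is the standard one and is correct: computing $-K_{\widetilde{Y}}=\pi^{*}D-E$ and using $m_p(D)=1$ (internal) versus $m_p(D)=2$ (corner) correctly determines whether $E$ enters the new cycle, the formula $\widetilde{C}^2=C^2-m_p(C)^2$ gives the self-intersection sequences in (1) and (2) for $\ell(D)\ge 2$, and your two derivations of (3) --- the telescoping of $Q=12+\sum_i(d_i-3)$ and the Euler-characteristic argument via Proposition \ref{prop_pair_Euler} --- are both sound, with the latter indeed handling the length-one corner case cleanly.

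The one place you stop short is the reconciliation you yourself flag in the $\ell(D)=1$ corner blow-up, and carrying it out exposes a convention clash in the statement rather than a flaw in your computation. You correctly get $\widetilde{D}_0^2=D_0^2-4$ and $E^2=-1$, so the new length-two cycle has negative self-intersection sequence $(4-D_0^2,\,1)$. Under the paper's convention (\ref{eqn_self_intersection}), where $d_0=2-D_0^2$ for an irreducible $D$, this reads $(d_0+2,\,1)$, \emph{not} $(d_0+4,\,1)$; the displayed answer $(d_1+4,1)$ (the subscript should be $0$) presupposes the uncorrected convention $d_0=-D_0^2$ for length one, which is the convention actually used in Proposition \ref{prop_Friedman_pair} (where the nodal cubic in $\pp^2$ is recorded as $(-9)$, not $(-7)$). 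So to finish your proof you should state explicitly which convention you adopt for length-one cycles and derive the corresponding formula; with either choice the substance of (1)--(3) is unaffected, since the charge computation in the length-one case is already covered by your $\chi_{\top}$ argument.
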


From \cite[Proposition 1.3]{GHK15}, if we have a Looijenga pair $(Y,D)$, there exists a sequence of corner blow-ups $(Y^\prime, D^\prime)$ such that  $(Y^\prime, D^\prime)$ has a toric model.
This means  $(Y^\prime, D^\prime)$ can be obtained from a toric Looijenga pair $(\overline{Y}, \overline{D})$ by internal blow-ups at some number of smooth points. 

\subsection{The minimal model of Looijenga pairs with a finite group action}
We introduce finite group action on Looijenga pairs.  
Let us first recall the notion of $G$-minimal pairs. Our references for the group action on projective rational surfaces are \cite{Isk79}, \cite{Zhang}.

\begin{defn}\label{defn_G-minimal_pair}
   A $G$-equivariant birational morphism $\sigma: Y_1\to Y_2$ is a birational morphism satisfying $\sigma(g x)=g\sigma(x)$ for $g\in G$. 
   The existence of such a morphism $\sigma$ is equivalent to that of a $G$-stable divisor on $Y_1$ which can be smoothly blown down.   
   Therefore, this means that if $\Pic(Y)^G\otimes \qq$ has rank $1$, then there is no such a $\sigma$.

   Two pairs of finite group actions $(Y_i,G)$ are birationally $G$-equivariant if there is a birational map 
   $Y_1\to \cdots\to Y_2$ which can be decomposed as $f_1\circ \cdots\circ f_n$ such that for each $i$, either $f_i$ or $f_i^{-1}$ is a $G$-equivariant birational morphism. The action $(Y,G)$ is called a $G$-minimal pair if for any $G$-equivariant birational morphism 
   $\sigma: Y\to Y_1$, $\sigma$ must be identity $\id$.
\end{defn}

We have the following result for the $G$-minimal pairs as in \cite{Zhang}.

\begin{thm}\label{thm_Zhang_G}(\cite[Theorem 4]{Zhang})
    Suppose that $Y$ is a smooth $G$-minimal projective rational surface.  Then we have the following:
    \begin{enumerate}
        \item Suppose that the $G$-invariant sublattice $(\Pic(Y))^G$ has rank $\ge 2$, then $Y$ has a $G$-stable conic fibration over $\pp^1$ and each singular fiber is a linear chain of two $(-1)$-curves. 
        \item Suppose that the $G$-invariant sublattice $(\Pic(Y))^G$ has rank $1$, then $Y$ is a smooth del Pezzo surface and the quotient 
        $X=Y/G$ is a singular del Pezzo surface with at worst quotient singularities so that 
        $\pi_1(X^{0})$ is finite; one has $\pi_1(X^{0})=G$ if the fixed locus $Y^G$ is a finite set.  Moreover, 
        \begin{enumerate}
            \item If the Fano index $r(X)=1$, and $Y^G$ is a finite set, then modulo $G$-equivariant isomorphism, $Y$ is one of the surfaces in \cite[Examples 2.1b, 2.5, 2.9-2.11]{Zhang}.  All of the singularities in $X$ are in \cite[Table 2]{Zhang}.
            \item  If the Fano index $r(X)>1$, then $X$ is either $\pp^2$ or the projective cone $\overline{F}_e (e\ge 2)$ with $e| |G|$.
        \end{enumerate}
    \end{enumerate}
\end{thm}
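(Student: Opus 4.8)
The plan is to derive the classification from the $G$-equivariant Minimal Model Program for surfaces, following the Manin--Iskovskikh theory (developed in \cite{Isk79}), and then to split into two cases according to the rank of the invariant Picard lattice $\rho^G:=\rk(\Pic Y)^G$. First I would apply the $G$-equivariant cone theorem: since $Y$ is a smooth rational surface and $G$ is finite, the cone of $G$-invariant curve classes is rational polyhedral on the $K_Y$-negative side, and $G$-minimality means $Y$ admits no $G$-equivariant divisorial contraction. Hence the contraction of a $K_Y$-negative $G$-invariant extremal ray must be of fiber type, producing a $G$-Mori fiber space whose base, since $Y$ is rational and two-dimensional, is either a point or $\pp^1$. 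The base being a point forces $\rho^G=1$, while the base being $\pp^1$ forces $\rho^G=2$; this dichotomy is exactly the split between (1) and (2).

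In the rank-two case I would take the $G$-equivariant morphism $\varphi\colon Y\to\pp^1$ furnished by the fiber class, so that $(\Pic Y)^G$ is spanned by a fiber $F$ and one further class. The generic fiber is a smooth rational curve, and since $Y$ is smooth, adjunction ($K_Y\cdot F=-2$, $F^2=0$) forces every singular fiber to degenerate to two $(-1)$-curves meeting transversally, i.e.\ a linear chain of two $(-1)$-curves. The finite group $G$ preserves the fibration and hence permutes these reducible fibers while acting within each of them; this gives the description in (1). The only genuine check here is that no $G$-invariant $(-1)$-curve can be smoothly blown down, which is automatic from $G$-minimality.

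In the rank-one case, contraction of the invariant extremal ray maps $Y$ to a point, so $-K_Y$ is ample and $Y$ is a del Pezzo surface. The quotient $X=Y/G$ has at worst quotient singularities (being a finite quotient of a smooth surface), and $-K_X$ is ample, so $X$ is a log del Pezzo surface; the finiteness of $\pi_1(X^0)$ then follows from the known finiteness of the smooth-locus fundamental group of a klt log del Pezzo. For the precise statement $\pi_1(X^0)=G$, I would observe that when $Y^G$ is finite the quotient $Y\to X$ restricts to an \'etale Galois $G$-cover over the smooth locus $X^0$ (the ramification maps to the finitely many singular points), so simple connectedness of $Y$ gives $\pi_1(X^0)=G$. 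The split by Fano index proceeds next: if $r(X)>1$, the classification of del Pezzo surfaces of large index is short, leaving only $\pp^2$ and the projective cones $\overline{F}_e$, with the divisibility $e\mid|G|$ read off from the cyclic stabilizer at the cone vertex; this gives (2)(b).

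The main obstacle is case (2)(a): Fano index $1$ with $Y^G$ finite. Here one must enumerate, up to $G$-equivariant isomorphism, all $G$-minimal del Pezzo surfaces together with their admissible finite groups, degree by degree for $1\le (K_Y)^2\le 9$, and for each compute the type of the resulting quotient singularities on $X$. This is where the explicit lists \cite[Examples 2.1b, 2.5, 2.9--2.11]{Zhang} and \cite[Table 2]{Zhang} originate, and reproducing them requires the full classification of finite automorphism groups of del Pezzo surfaces acting with $\rho^G=1$, together with the local computation of the linearized stabilizer action at each fixed point. I expect this finite but intricate case analysis, rather than the MMP dichotomy itself, to be the real content of the proof.
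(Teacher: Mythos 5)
This theorem is not proved in the paper at all: it is imported verbatim from \cite[Theorem 4]{Zhang}, so there is no in-paper argument to compare yours against. Your skeleton --- run the $G$-equivariant MMP, observe that $G$-minimality rules out divisorial contractions so the first step is a $G$-Mori fiber space, and split according to whether the base is $\pp^1$ (conic bundle, $(\Pic Y)^G$ of rank $2$) or a point (del Pezzo, rank $1$) --- is exactly the standard Iskovskikh--Manin strategy that underlies Zhang's proof, and your treatment of part (1) and of the ampleness of $-K_X$ in part (2) is sound at the level of a sketch.

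Two caveats. First, your argument for $\pi_1(X^0)=G$ has a gap as written: for $Y\setminus\pi^{-1}(\Sing X)\to X^0$ to be an \'etale Galois $G$-cover you need the fixed locus of \emph{every nontrivial element} of $G$ to be finite, not merely the common fixed locus $Y^G$; if some $g\neq 1$ fixes a curve, the quotient is branched along a curve inside $X^0$ and $\pi_1(X^0)$ is only a proper quotient of $G$. (Whether this is a defect of your proof or of the transcription of the hypothesis depends on what $Y^G$ denotes in Zhang's paper, but your \'etale-cover argument needs the stronger condition.) Second, and more importantly, parts (2)(a) and (2)(b) --- the explicit lists of $G$-minimal del Pezzo pairs, the singularity types in \cite[Table 2]{Zhang}, and the index-$>1$ classification with the divisibility $e\mid |G|$ --- are precisely the content of the theorem, and your proposal defers all of it. You correctly identify this as the real work, but as it stands the proposal is an accurate road map rather than a proof; for the purposes of this paper the honest course is to do what the author does and cite \cite[Theorem 4]{Zhang}.
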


From \cite[Theorem 2.4]{Friedman}, we have

\begin{prop}\label{prop_Friedman_pair}
If $(Y,D)$ be a minimal anti-canonical pair. Then exactly $(Y,D)$ is one of the following:
 \begin{enumerate}
    \item  $Y=\pp^2$, and $D$ is either three lines in general position, a line and a conic meeting transversally, or an irreducible nodal cubic. Equivalently, the possible negative self-intersection sequences are $(-1,-1,-1), (-1,-4)$, or $(-9)$. The corresponding values of the charges $Q(\pp^2,D)$ are: $Q(\pp^2,D) = 0$ if $D$ has negative  self-intersection sequence $(-1,-1,-1)$; $Q(\pp^2,D) = 1$ if $D$ has negative  self- intersection sequence $(-1,-4)$; $Q(\pp^2,D) = 2$ if $D$ has negative self-intersection sequence $(-9)$.
    \item  $Y=\ff_a$, $a\neq 1$, and $D$ is the union of the negative section $s_0$ and
    \begin{enumerate}
        \item  A section $s$ with $s^2 = a$ and two fibers $f_1$ and $f_2$. In this case,
the negative  self-intersection sequence is $(a, 0, -a, 0)$ and $Q(\ff_a , D) = 0$.
\item  A section $s$ with $s^2 = a + 2$ meeting $s_0$ transversally and one fiber $f$ not passing through the intersection points of $s_0$ and $s$. In this case, the negative  self-intersection sequence is $(a, -a-2, 0)$ up to orientation and $Q(\ff_a , D) = 1$.
\item  A section $s$ with $s^2 = a + 4$ meeting $s_0$ transversally. In this case, the negative self-intersection sequence is $(a,-a-4)$ and $Q(\ff_a,D) = 2$.
\end{enumerate}
\item  $Y= \ff_a$, $a = 0,2$, and $D$ is either an irreducible nodal bisection of negative self-intersection $-8$, with $Q(\ff_a,D) = 3$, or $D$ is the union of two sections of negative self-intersection $-2$, with $Q(\ff_a , D) = 2$.
\end{enumerate}
\end{prop}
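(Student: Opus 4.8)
The plan is to reduce the classification to the structure theory of minimal rational surfaces and then to enumerate anticanonical nodal divisors on $\pp^2$ and on the Hirzebruch surfaces $\ff_a$ directly.

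First I would exploit minimality to pin down the surface $Y$. For any $(-1)$-curve $E\subset Y$, adjunction gives $E\cdot D=E\cdot(-K_Y)=1$, so $E$ meets $D$ in a single reduced point. If $E$ is a component of $D$, that point is a node and contracting $E$ is exactly the corner blow-down of Proposition~\ref{prop_internal_corner_blow-up_charge}; if $E\not\subset D$, then $E\cdot D=1$ forbids $E$ from passing through a node of $D$, so $E$ meets $D$ transversally at a smooth point and contracting it is an internal blow-down. In either case the contraction yields a strictly smaller anticanonical pair, contradicting minimality. Hence a minimal $(Y,D)$ has no $(-1)$-curve at all, so $Y$ is a minimal rational surface, i.e. $Y\cong\pp^2$ or $Y\cong\ff_a$ with $a\neq 1$ (the surface $\ff_1$ is a one-point blow-up of $\pp^2$ and is excluded).

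Next I would enumerate $D\in|-K_Y|$ in each case. For $Y=\pp^2$ we have $-K_{\pp^2}=\sO(3)$, and a connected singular cubic that is \emph{only} nodal ($p_a=1$) is exactly a triangle of three lines, a line plus a transverse conic, or an irreducible one-nodal cubic; triple points, tangencies and cusps are ruled out by the nodal hypothesis, giving the sequences $(-1,-1,-1)$, $(-1,-4)$, $(-9)$. For $Y=\ff_a$, write $D\in|2C_0+(a+2)F|$ with $C_0$ the negative section ($C_0^2=-a$) and $F$ a fiber, and split $D$ into its horizontal (section/bisection) and vertical (fiber) components. A cycle built from $C_0$, a second section $s$, and $m$ fibers must have $m\le 2$, since a simple cycle cannot contain three fibers joining only two sections; this produces the three families of case (2): $m=2$ with $s^2=a$, sequence $(a,0,-a,0)$; $m=1$ with $s^2=a+2$, sequence $(a,-a-2,0)$; and $m=0$ with $s^2=a+4$, sequence $(a,-a-4)$. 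The only remaining possibility is that $D$ has a component of horizontal degree $2$; requiring an irreducible nodal bisection, or a pair of sections of self-intersection $+2$, to \emph{not} contain $C_0$ as a fixed component amounts to $D\cdot C_0\ge 0$, i.e. $a\le 2$, and together with $a\neq 1$ and a parity constraint this isolates $a\in\{0,2\}$, yielding case (3). Throughout I would record the charges via Definition~\ref{defn_charge_pair}, cross-checked against $Q(Y,D)=\chi_{\top}(Y-D)$ from Proposition~\ref{prop_pair_Euler}.

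The main obstacle I anticipate is the completeness and non-redundancy of the $\ff_a$ enumeration rather than any single hard estimate: one must verify that every nodal anticanonical cycle is accounted for by the section/fiber bookkeeping, impose the nodality constraints (transversality, absence of triple points) correctly, and handle the coincidences arising from the two rulings of $\ff_0$, where a configuration that superficially looks like a bisection plus a fiber is really case (2c) for the other ruling. The irreducibility and fixed-component analysis that forces $a\le 2$ in case (3) is the delicate combinatorial point that makes the final list genuinely finite.
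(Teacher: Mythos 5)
The paper does not actually prove this proposition: it is stated as a direct quotation of Friedman's classification (``From \cite[Theorem 2.4]{Friedman}, we have\dots''), so there is no in-paper argument to compare against. Judged on its own, your proof is correct and is essentially the standard argument one finds in Friedman's paper: use adjunction to show any $(-1)$-curve $E$ satisfies $E\cdot D=1$, hence is contractible either as a corner or as an internal blow-down, so minimality forces $Y$ to be a minimal rational surface; then enumerate reduced connected nodal members of $|-K_Y|$ on $\pp^2$ and on $\ff_a$ by splitting into horizontal and vertical parts. Your cycle-combinatorics on $\ff_a$ (each section in a cycle meets exactly two other components, giving $m+s_1\cdot s_2=2$ and hence the three subcases of (2)), and the observation that $D\cdot C_0=2-a<0$ forces $C_0\subset D$ for $a\ge 3$ and thereby isolates $a\in\{0,2\}$ in case (3), reproduce exactly the content of the cited theorem.

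Two small points of care. First, when you ``record the charges via Definition~\ref{defn_charge_pair}'', remember that for irreducible $D$ the convention is $d=2-D^2$, not $-D^2$: the nodal cubic has $d=-7$ (giving $Q=2$) even though the proposition lists the sequence as $(-9)$, and the nodal bisection has $d=-6$ (giving $Q=3$) though listed as $-8$; the cross-check against $Q(Y,D)=\chi_{\top}(Y-D)$ from Proposition~\ref{prop_pair_Euler} is the safer route. Second, your opening reduction implicitly uses that ``minimal pair'' means no corner or internal blow-down is possible; since both operations contract a $(-1)$-curve and, conversely, every $(-1)$-curve admits one of the two contractions by your adjunction argument, the notion coincides with $Y$ being a minimal surface, so the reduction is sound --- but it is worth stating that equivalence explicitly rather than leaving it to the reader.
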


Combining the above results and Theorem \ref{thm_Zhang_G}, the $G$-minimal pairs $(Y, D, G)$ are classified. 
\begin{prop}\label{prop_G-minimal_pair_cusp}
    Let $(Y,D)$ be a smooth Looijenga pair.  Suppose that there is a $G$-action on $(Y,D)$ such that $(Y,D)$ is $G$-minimal, then we have the cases: 
\begin{enumerate}
        \item Suppose that the $G$-invariant sublattice $(\Pic(Y))^G$ has rank $\ge 2$, then $Y$ has a $G$-stable conic fibration over $\pp^1$ and each singular fiber is a linear chain of two $(-1)$-curves. 
        The number of the $\pp^1$-components in the boundary divisor $D\in |-K_Y|$ can be $4, 5$ or $6$.  
        \item Suppose that the $G$-invariant sublattice $(\Pic(Y))^G$ has rank $1$, then $(Y,D)$ is a smooth del Pezzo surface pair and the quotient 
        $X=Y/G$ is a singular del Pezzo surface with at worst quotient singularities so that 
        $\pi_1(X^{0})$ is finite; one has $\pi_1(X^{0})=G$ if the fixed locus $Y^G$ is a finite set. The boundary divisor $E\in |-K_X|$
        is induced from $D$. Moreover, 
        \begin{enumerate}
            \item If the Fano index $r(X)=1$, and $Y^G$ is a finite set, then modulo $G$-equivariant isomorphism, $Y$ is one of the surfaces in \cite[Examples 2.1b (only $\mu_3$), 2.5, 2.9-2.11]{Zhang}.  All of the singularities in $X$ are in \cite[Table 2]{Zhang}.
            \item  If the Fano index $r(X)>1$, then $X$ is either $\pp^2$ or the projective cone $\overline{F}_e (e\ge 2)$ with $e| |G|$.
            The boundary divisor $E\in |-K_X|$ is  induced from $D$.  
        \end{enumerate}
    \end{enumerate}  
\end{prop}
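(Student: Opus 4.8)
The plan is to feed the $G$-minimal surface $Y$ into Zhang's Theorem \ref{thm_Zhang_G} and then, in each of the two resulting cases, determine which anticanonical cycles $D$ can occur and how they behave under the quotient. Since $G$ is assumed to act on the \emph{pair} $(Y,D)$, the cycle $D$ is $G$-invariant, and $-K_Y$ is automatically $G$-linearized; this is the extra datum that must be tracked on top of Zhang's purely surface-theoretic classification. The dichotomy on $\rank (\Pic(Y))^G$ is inherited verbatim from Theorem \ref{thm_Zhang_G}, so the content lies in (1) reading off the possible cycles in the conic-bundle case and (2) checking that $D$ descends to an anticanonical divisor on the quotient in the del Pezzo case.

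I will dispose of the rank-one case first. Here Zhang's theorem already identifies $Y$ as a smooth del Pezzo surface and $X = Y/G$ as a singular del Pezzo with at worst quotient singularities, together with the stated constraints on $\pi_1(X^0)$ and on the Fano index; these statements are transported directly, giving sub-cases (a) and (b). The only new point is that the boundary descends. Because the fixed locus $Y^G$ is finite, the quotient map $\pi\colon Y \to X$ has no divisorial ramification, so $K_Y = \pi^* K_X$ as $\qq$-Cartier divisors. The $G$-invariant cycle $D \in |-K_Y|$ therefore satisfies $D = \pi^{-1}(E)$ for $E := \pi(D)$, and $E \in |-K_X|$, as claimed. (In sub-case (a) one restricts Zhang's Example 2.1b to $\mu_3$, since that is the only cyclic action there compatible with a full anticanonical cycle.)

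The substance is the conic-bundle case, $\rank (\Pic(Y))^G \ge 2$. Fix the $G$-stable conic fibration $\varphi\colon Y \to \pp^1$ supplied by Theorem \ref{thm_Zhang_G}, whose singular fibers are chains of exactly two $(-1)$-curves. Since $-K_Y \cdot f = 2$ on a general fiber $f$, the components of $D$ split into \emph{horizontal} ones (finite over $\pp^1$, with total degree $2$) and \emph{vertical} ones (contained in fibers). In the relevant configuration the horizontal part is a pair of disjoint sections $s_0, s$, and the cycle condition on $D$ then forces exactly two vertical chains joining them. Each vertical chain lies in a single fiber of $\varphi$, so by Zhang's description it is either a smooth fiber (one $0$-curve) or an entire singular fiber (a chain of two $(-1)$-curves). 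Hence $\ell(D) = 2 + \{1 \text{ or } 2\} + \{1 \text{ or } 2\} \in \{4,5,6\}$, realized by Friedman's toric pair $(a,0,-a,0)$ of Proposition \ref{prop_Friedman_pair}(2a) when both chains are smooth fibers and by its corner blow-ups (Proposition \ref{prop_internal_corner_blow-up_charge}(2)) when singular fibers intervene. Two observations pin the count: internal blow-ups may raise the charge and the self-intersections but leave $\ell(D)$ unchanged by Proposition \ref{prop_internal_corner_blow-up_charge}(1), so they never alter the component count; and the cap at $6$ is precisely Zhang's restriction to two-curve singular fibers, since a second corner blow-up on one vertical chain would produce a longer fiber chain. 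Finally, these singular-fiber configurations are consistent with—indeed forced by—$G$-minimality: the two $(-1)$-curves of a singular fiber meet, so their $G$-orbit cannot be contracted $G$-equivariantly, exactly as required by Definition \ref{defn_G-minimal_pair}.

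The main obstacle is the structural claim in the conic-bundle case that $D$ \emph{must} consist of two disjoint sections joined by two vertical chains, each chain meeting at most one singular fiber. This requires ruling out a horizontal nodal bisection and the shorter configurations in which the two sections meet (which produce the length-$2$ and length-$3$ pairs of Proposition \ref{prop_Friedman_pair}), using both the genus-one constraint on the cycle $D$ and the precise shape of the singular fibers guaranteed by Theorem \ref{thm_Zhang_G}. I would carry this out by analyzing the weighted dual graph of $D$ together with the map $\varphi$ to $\pp^1$, which simultaneously controls the horizontal degree, the vertical support, and the contractibility obstructions governing $G$-minimality; once this structural lemma is in place, the count $4,5,6$ and its compatibility with $G$-minimality follow immediately.
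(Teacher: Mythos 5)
Your proposal follows the same backbone as the paper's proof: both cases are read off from Zhang's classification (Theorem \ref{thm_Zhang_G}), the rank-one case is identified with the smooth del Pezzo situation, and the component count $4,5,6$ in the conic-bundle case comes from the same structure (two sections plus two fibers, each fiber contributing one or two components according as it is smooth or a chain of two $(-1)$-curves). The differences are in how the secondary claims are handled. For the descent of the boundary in case (2), you give a single general argument --- no divisorial ramification when $Y^G$ is finite, hence $K_Y=\pi^*K_X$ and $E:=\pi(D)\in|-K_X|$ --- whereas the paper instead walks through Zhang's Examples 2.1b, 2.5, 2.9--2.11 one by one and records $E$ and the quotient singularities explicitly in each; your route is cleaner but note that it leans on finiteness of $Y^G$, which the proposition only imposes in sub-case (a), so strictly speaking you should either observe that every surface in Zhang's rank-one list with the pair structure has no divisorial fixed locus, or fall back on the case-by-case check as the paper does. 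In case (1) you are actually more careful than the paper: the paper simply asserts that $D$ consists of the two sections plus two fibers, while you identify the need to exclude a horizontal nodal bisection and the configurations where the sections meet, and you sketch (but do not complete) the dual-graph argument. That remaining structural lemma is the only genuinely unfinished step in your write-up, and the paper itself does not supply it either, so your proposal is at least as complete as the published argument.
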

\begin{proof}
From Theorem \ref{thm_Zhang_G}, if the  $G$-invariant lattice $(\Pic(Y))^G\otimes \qq$  of the $G$-minimal pair of $(Y,D)$
     has rank $\ge 2$, then $Y$ must be in the case (1) of Theorem \ref{thm_Zhang_G}.  The boundary divisor $D\in |-K_Y|$ must the zero section $\pp^1$ and infinity $\pp^1$, plus the two fibers.  Since the singular fiber only contains two $-1$-curves,  the possible components in $D$ only has $4, 5$ or $6$ components.

    If the  $G$-invariant lattice $(\Pic(Y))^G\otimes \qq$  of the $G$-minimal pair of $(Y,D)$
     has rank $1$, then $Y$ must be smooth del Pezzo surface.  
     This is from \cite{Manin}. 
     Therefore, it must belong to case (2) in Theorem \ref{thm_Zhang_G}.

     In the case that the quotient $r(X)=1$, 
     we list the examples Examples 2.1b, 2.5, 2.9-2.11] in \cite{Zhang}, and the boundary divisors. 
     \begin{enumerate}
         \item \cite[2.1 b]{Zhang}, in this case $Y=\pp^2$, $G=\mu_3$ acts on $\pp^2=\proj(\cc[x:y:z])$ by 
         $$
        [x:y:z]\mapsto [x, \zeta y, \zeta^2 z]
         $$
         where $\zeta\in \mu_3$ is the generator.
         $X=Y/\mu_3$ has three isolated fixed points $[1:0:0], [0:1:0], [0:0:1]$ and $X$ is a log del Pezzo surface of rank one. 
         The boundary divisor $D\in |-K_Y|$ is in (1) of Proposition \ref{prop_Friedman_pair} and $E\in |-K_X|$ is the quotient of $D$. 
         \item \cite[2.5]{Zhang}, in this case $Y=Bl_{4}\pp^2$ is the degree $5$ del Pezzo surface. There are $10$ $(-1)$-curves in $Y$. $D\in |-K_Y|$ is the $5$ of them intersecting transversally. 
The group $\mu_5=\langle g\rangle$ acts on $Y$ and $Y^{\mu_5}$ has two fixed points with type $\frac{1}{5}(1,4)$. The quotient $X=Y/\mu_5$ is a Gorenstein log del Pezzo surface with two type  $\frac{1}{5}(1,4)$ singularities.
The quotient $E=D/\mu_5$ is a rational quotient nodal curve with self-intersection number $-1$. The quotient nodal point has type $\frac{1}{5}(1,4)$.  We have $K_X^2=1$ and $E^2=1$.
\item \cite[2.9]{Zhang}, in this case $Y=Bl_{3}\pp^2$ is the degree $6$ del Pezzo surface.  $D\in |-K_Y|$ contains $6$ $-1$-curves intersecting transversally. 
The group $\mu_6=\langle g\rangle$ acts on $Y$ and $Y^{\mu_6}$ has three fixed points with type $\frac{1}{2}(1,1), \frac{1}{3}(1,2)$, and $\frac{1}{6}(1,5)$ respectively. The quotient $X=Y/\mu_6$ is a Gorenstein log del Pezzo surface with three types $\frac{1}{2}(1,1), \frac{1}{3}(1,2)$, and $\frac{1}{6}(1,5)$ singularities.
The quotient $E=D/\mu_6$ is a rational quotient nodal curve with self-intersection number $-1$.  We have $K_X^2=1$ and $E^2=1$.
\item \cite[2.10]{Zhang}, in this case $Y=\pp^2$, and 
$G=\zz_3\oplus \zz_3$ acts on $\pp^2$ effectively.  $Y^G$ is a finite set of order $12$. $X=Y/G$ is a log surface with $12$ $A_2$-singularities.
\item \cite[2.11]{Zhang}, in this case $Y=\pp^1\times\pp^1$, and 
$G=\zz_4$ acts on $\pp^1\times\pp^1$ effectively.  $Y^G$ has three singularities of type $\frac{1}{2}(1,1),\frac{1}{4}(1,3),\frac{1}{4}(1,3)$. 

In the second case $Y=\pp^1\times\pp^1$, and 
$G=\zz_2\oplus \zz_4$ acts on $\pp^1\times\pp^1$ effectively.  $Y^G$ is a finite set of order $4$. $X=Y/G$ is a log surface with $2$ $A_1$-singularities and $2$ $A_3$-singularities.
     \end{enumerate}
\end{proof}

\subsection{Finite group action on Looijenga pairs}\label{subsec_Looijenga_Pair_G}

In this section we consider  Looijenga pairs $(Y,D)$ with the intersection matrix $(D_i\cdot D_j)$ negative-definite.  This means that $D$ can be contracted to a cusp singularity.  We take the Looijenga pair $(Y,D)$ as analytic surfaces with analytic divisors. 
It is also interesting to study the finite group action on algebraic Looijenga pairs, and in general the action will have fixed points as quotient singularities. 

In general  Looijenga pair $(Y,D)$ may deform to Looijenga pair containing surface singularities.  There are many ways to think about the deformation of Looijenga pairs in the moduli space, including $K$-stable moduli space as in \cite{ADL2023}, KSBA stable pair moduli spaces in \cite{AAB2024}, and purely moduli of Calabi-Yau spaces as in  \cite{BL2024}. In either of these type moduli spaces, the singularities in the boundary of the deformation of Looijenga pairs may be different.  Here we consider general Looijenga pairs with at most quotient singularities.

\begin{defn}\label{defn_Looijenga_pair_G}
Let $(Y,D)$ and $(Y^\prime, D^\prime)$ be two Looijenga pairs.  An isomorphism between these two Looijenga pairs is given by  an isomorphism 
$$f: Y\to Y^\prime$$
such that $f(D_i)=D^\prime_i$ for $i=1, 2, \cdots, n$, and 
$f$ is compatible with the orientation of $D$ and $D^\prime$.   We let 
$\Aut(Y,D)$ be the automorphism group of $(Y,D)$. 

Let $G$ be a finite group.  We say a Looijenga pair $(Y,D)$ admits a $G$-action if 
$G\subset \Aut(Y,D)$ is a finite subgroup of the automorphism group. 
\end{defn}

We consider a special  finite group $G$-action on a Looijenga pair. 
Let us now restrict to negative-definite Looijenga pairs $(Y,D)$.  Artin's criterion for contractibility implies that $D$ can be analytically contracted to a singular cusp point
$$\pi: (Y,D)\to (\overline{Y},p).$$

\begin{defn}\label{defn_pair_hyperbolic_type_G}
A finite group $G$-action on a negative-definite Looijenga pair is called {\em hyperbolic type}  if the $G$-action only has quotient isolated singularities on 
$Y-D$ up to $G$-birational contraction $Y\setminus D\to \overline{Y\setminus D}$ , and there exists an open neighborhood $V_D\subset Y$ of $D$ such that 
$V_D$ is isomorphic to the neighborhood 
$V_C$ constructed in \S \ref{subsec_G_Inoue} and the $G$-action on $V_D$ is induced by the action on $V_C$ in \S \ref{subsec_G_Inoue}. 
Moreover, the quotient space $(V_{D}-D)/G$ is isomorphic to another open analytic space 
$V_{E}$ in \S \ref{subsec_G_Inoue}, so that adding one cusp point $q$ to $(V_{D}-D)/G$ we get a neighborhood $\bbV_{E}$ of $q$. 
\end{defn}

\begin{rmk}
Definition \ref{defn_pair_hyperbolic_type_G} implies that if a $G$-action on $(Y,D)$ is hyperbolic, then the quotient 
$(Y/D)/G=(X,E)$, possibly after resolution of singularities,  is also a Looijenga pair. 
\end{rmk}

\begin{rmk}
In general it is interesting to study the symplectic finite group $G$ action on a pair $(Y, \omega)$, where $Y$ is a rational surface, and $\omega$ is a symplectic form. 
The quotient  of a rational surface by a finite group is unirational, hence by Castelnuovo's theorem, is still rational. 
\end{rmk}

It is interesting to study the hyperbolic action more. 

\begin{defn}\label{defn_lattices}
    Let $(Y,D)$ be a negative definite  Looijenga pair.  Define 
    $$\Lambda(Y,D):=\{[D_0],\cdots, [D_{n-1}]\}^{\perp}\subseteq H^2(Y,\zz)$$
    i.e., the orthogonal complement of the lattice spanned by the classes $\{[D_i]\}$.
    We denote $L:=\{[D_0],\cdots, [D_{n-1}]\}$ the lattice spanned by $\{[D_i]\}$ in $H^2(Y,\zz)$.
\end{defn}
From \cite[\S3, Theorem 1]{Pinkham}, \cite[Lemma 1.5]{Friedman}, we have the rank of the lattice $\Lambda(Y,D)$
$$\rk(\Lambda(Y,D))=10+n-s=Q(Y,D)-2+r$$
where $s$ is the length of the resolution cycle of the dual cusp to the cusp $D$, and 
$r$ is the rank of the kernel 
$$\oplus_i\zz[D_i]\to H^2(Y,\zz).$$

We have:

\begin{thm}\label{thm_lattice_D_Lambda}
    Let $(Y,D)$ be a negative definite Looijenga pair with minimal period $d=(d_0,\cdots, d_{n-1})$. Suppose that there is a hyperbolic action of a finite group $G$ on $(Y,D)$. We have
    \begin{enumerate}
        \item  suppose that $L\to H^2(Y;\zz)$ is primitive, then $\Lambda(Y,D)^*/\Lambda(Y,D)\cong T$, where $T=T(M,\sigma)$ is the torsion subgroup of $H_1(\Sigma,\zz)_{\tor}$, where $\Sigma$ is the link of the cusp determined by the cycle $D$ in \S \ref{subsec_Hirzebruch}. Here $M$ is the rank two lattice and we have 
        $T=M/(\sigma-1)M$.
        \item The action $G$ fixes $L$ and move the lattice $\Lambda(Y,D)$. 
    \end{enumerate}
\end{thm}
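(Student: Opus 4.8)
The plan is to treat the two statements separately: part (1) rests on the standard theory of discriminant forms together with a comparison of two descriptions of the link $\Sigma$, while part (2) rests on the explicit local model of the $G$-action from \S\ref{subsec_G_Inoue}. For part (1), first recall that a smooth rational surface $Y$ is simply connected, so $H^2(Y,\zz)$ is torsion-free and, by Poincaré duality, unimodular. Because $[D_i\cdot D_j]$ is negative definite, the classes $[D_i]$ are linearly independent, so $L$ is a nondegenerate sublattice with Gram matrix $I:=[D_i\cdot D_j]$; consequently $r=0$ in the rank formula preceding the theorem, and $L\oplus\Lambda(Y,D)\subseteq H^2(Y,\zz)$ has finite index. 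Under the hypothesis that $L\hookrightarrow H^2(Y,\zz)$ is primitive, the standard fact on primitive embeddings into a unimodular lattice yields an isomorphism of discriminant groups $L^*/L\cong\Lambda(Y,D)^*/\Lambda(Y,D)$ (their discriminant forms differ only by a sign). The problem thus reduces to computing $L^*/L$, which in the basis $\{[D_i]\}$ is exactly $\operatorname{coker}(I)=\zz^n/I\zz^n$.

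The second step is to identify $\operatorname{coker}(I)$ with $T=M/(\sigma-1)M$, using the link $\Sigma$ of the cusp as a bridge. On one hand, $\Sigma$ is the boundary of a tubular neighborhood of the cycle $D$ in $Y$, i.e.\ the plumbed $3$-manifold attached to the cycle of rational curves; since all components are rational and the dual graph is a single cycle (first Betti number $1$), plumbing calculus gives $H_1(\Sigma,\zz)\cong\operatorname{coker}(I)\oplus\zz$, so $H_1(\Sigma,\zz)_{\tor}\cong\operatorname{coker}(I)=L^*/L$. On the other hand, as recalled in \S\ref{subsec_Hirzebruch}, $\Sigma$ is a $T^2$-bundle over $S^1$ with monodromy $\sigma\in SL_2(\zz)$; the Wang exact sequence of this bundle reads
$$M\xrightarrow{\ \sigma-1\ }M\longrightarrow H_1(\Sigma,\zz)\longrightarrow\zz\longrightarrow 0,$$
and since $\sigma$ is hyperbolic we have $\det(\sigma-1)\neq0$, so the sequence splits as $H_1(\Sigma,\zz)\cong M/(\sigma-1)M\oplus\zz$ with finite torsion $M/(\sigma-1)M=T$. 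Comparing torsion subgroups gives $\operatorname{coker}(I)\cong T$, and combined with the previous paragraph we conclude $\Lambda(Y,D)^*/\Lambda(Y,D)\cong T$.

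For part (2), that $G$ fixes $L$ follows from the explicit local model. Under the minimal-period hypothesis the $\UU_M/\langle\sigma\rangle$-part of the action is trivial (by the Proposition in \S\ref{subsec_G_Inoue}), so the $G$-action near $D$ is induced by the $\overline{M}/M$-action, which by \eqref{eqn_action1}–\eqref{eqn_action2} scales the coordinates $(u_r,v_r)$ by roots of unity and hence carries each component $D_r=\{u_{r+1}=v_r=0\}$ to itself. Therefore $g^*[D_i]=[D_i]$ for every $g\in G$, i.e.\ $G$ fixes $L$ pointwise. To see that $G$ nevertheless moves $\Lambda(Y,D)$, suppose the contrary; then $G$ would fix both $L$ and $\Lambda(Y,D)$ pointwise, hence act trivially on the finite-index sublattice $L\oplus\Lambda(Y,D)$ and, after tensoring with $\qq$, trivially on all of $H^2(Y,\zz)$. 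This contradicts the fact that the hyperbolic action is faithful and, by Definition \ref{defn_pair_hyperbolic_type_G} and Theorem \ref{thm_cusp_equivariant_smoothing}, produces a quotient Looijenga pair $(X,E)$ whose cusp $E$ is genuinely different from $D$: identifying $\Lambda(Y,D)^G$ rationally with $\Lambda(X,E)$ and using Proposition \ref{prop_pair_Euler} together with $\rk\Lambda=Q-2$, one gets $\rk\Lambda(Y,D)^G=Q(X,E)-2\neq Q(Y,D)-2$, so $\Lambda(Y,D)^G\subsetneq\Lambda(Y,D)$ and $G$ acts nontrivially on $\Lambda(Y,D)$.

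The main obstacle I expect is the final step of part (2): making precise the rational identification of $\Lambda(Y,D)^G$ with $\Lambda(X,E)$, which is what converts the change of charge under the quotient into nontriviality of the action on $\Lambda$. This requires relating $\Lambda(X,E)\otimes\qq$ to $(\Lambda(Y,D)\otimes\qq)^G$ through the quotient map $Y\setminus D\to X\setminus E$ --- e.g.\ via transfer for rational cohomology of the complement, combined with the identification $Q=\chi_{\top}$ of the complement in Proposition \ref{prop_pair_Euler} --- and checking that the isolated quotient singularities on $Y\setminus D$ permitted by the hyperbolic condition do not disturb this comparison. By contrast, the discriminant-form input in part (1) and the plumbing/Wang computation are routine; it is the equivariant bookkeeping on the open part $Y\setminus D$ where care is genuinely needed.
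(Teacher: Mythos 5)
For part (1) your argument is correct and is, in substance, a proof of the result the paper simply cites (\cite[\S 3, Theorem 1]{Pinkham}): negative definiteness gives $r=0$ and the nondegeneracy of $L$, primitivity plus unimodularity of $H^2(Y,\zz)$ gives $L^*/L\cong\Lambda(Y,D)^*/\Lambda(Y,D)$, and your two computations of $H_1(\Sigma,\zz)_{\tor}$ (plumbing calculus over the cycle of rational curves versus the Wang sequence of the $T^2$-bundle with monodromy $\sigma$) correctly match $\operatorname{coker}[D_i\cdot D_j]$ with $M/(\sigma-1)M$. The first half of part (2) --- that $G$ fixes $L$ pointwise because the minimal-period hypothesis kills the $\UU_M/\langle\sigma\rangle$-part and the $\overline{M}/M$-part scales the local coordinates and hence preserves each component $D_i$ --- is also correct, and is exactly the content of the paper's one-sentence justification that ``$G$ acts on a neighborhood $V_D$ of $D$.''

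The gap is where you yourself flagged it, in the claim that $G$ acts nontrivially on $\Lambda(Y,D)$, and it is more serious than a missing identification. First, the rank bookkeeping $\rk\Lambda(Y,D)^G=Q(X,E)-2$ does not follow from transfer alone: $X$ is obtained from $Y/G$ only after resolving singularities, so $H^2(X,\qq)$ contains $H^2(Y,\qq)^G$ \emph{plus} the exceptional classes of those resolutions (both over the interior quotient points and over $D/G$), and these extra classes land in $\Lambda(X,E)$; any rank comparison must account for them. Second, even granting a clean comparison, the inequality $Q(X,E)\neq Q(Y,D)$ is not automatic: since the hyperbolic action is only almost free, $\chi\bigl((Y\setminus D)/G\bigr)=\frac{1}{|G|}\bigl(\chi(Y\setminus D)+\sum_{g\neq1}|\mathrm{Fix}(g)\cap(Y\setminus D)|\bigr)$, which can perfectly well equal $\chi(Y\setminus D)$ (e.g.\ $|G|=2$ with $Q(Y,D)$ interior fixed points), so the intended contradiction need not materialize. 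To be fair, the paper's own proof of item (2) is the same single sentence and only addresses the ``fixes $L$'' half; if ``move'' is read weakly as ``$G$ preserves $\Lambda(Y,D)$ and any nontrivial action on $H^2$ is concentrated there,'' your first half already suffices, but under the natural reading ``acts nontrivially'' the claim is established neither by your argument nor by the paper as written.
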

\begin{proof}
    Item (1) is just from \cite[\S 3, Theorem 1]{Pinkham}.
 Item (2) is from the hyperbolic action, since $G$ acts on a neighborhood $V_D$ of $D$ in the surface $Y$.
\end{proof}

\begin{prop}\label{prop_pair_G_hyperbolic}
    Let $(Y,D)$ be a negative definite Looijenga pair with minimal period $d=(d_0,\cdots, d_{n-1})$, together with a hyperbolic action of a finite group $G$.
    Then the $G$-minimal model of $(Y,D)$ must be the case (1), (2) in 
    Proposition \ref{prop_G-minimal_pair_cusp}.
\end{prop}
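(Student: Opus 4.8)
The plan is to run the $G$-equivariant minimal model program on $Y$ while tracking the boundary $D$, and then to invoke the classification already established in Proposition~\ref{prop_G-minimal_pair_cusp}. The point that makes the pair structure survive the contractions is an adjunction computation: since $D \in |-K_Y|$, every $(-1)$-curve $E$ on $Y$ satisfies $E \cdot D = -K_Y \cdot E = 1$. Consequently no $(-1)$-curve is disjoint from $D$, and each one is either a component of $D$ (a corner curve) or meets $D$ transversally at a single smooth point (an internal curve). Because the $G$-action preserves $D$, each $G$-orbit of $(-1)$-curves consists purely of corner curves or purely of internal curves, so every equivariant contraction is a simultaneous corner blow-down or a simultaneous internal blow-down.

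First I would run the $G$-MMP: contract a $G$-orbit of pairwise disjoint $(-1)$-curves and repeat until no such orbit remains, producing a $G$-equivariant birational morphism $\rho\colon (Y,D) \to (Y_{\min}, D_{\min})$ onto a $G$-minimal surface. By the previous observation, together with Proposition~\ref{prop_internal_corner_blow-up_charge} read in reverse, the divisor $D_{\min} = \rho_* D$ remains a connected nodal anti-canonical cycle; thus $(Y_{\min}, D_{\min})$ is again a smooth Looijenga pair, now $G$-minimal. At this stage Theorem~\ref{thm_Zhang_G} classifies $Y_{\min}$ as either a $G$-conic bundle or a del Pezzo surface, and Proposition~\ref{prop_G-minimal_pair_cusp} packages exactly these two alternatives as its cases (1) and (2), which would complete the argument. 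Note that the negative-definiteness of the original $D$ is generally lost at the minimal model (on a del Pezzo surface $D_{\min}$ has positive self-intersections), and this is consistent with the statement, which only asserts membership in cases (1)--(2).

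The hard part will be controlling the internal blow-downs so that $D_{\min}$ does not degenerate: I must rule out two distinct curves of a single $G$-orbit meeting $D$ at a common point (which would force an extra fixed point and could produce a tacnode or break the cycle), and I must check that the contractions are compatible with the hyperbolic structure. Here I would use the hypothesis that the $G$-action is hyperbolic in the sense of Definition~\ref{defn_pair_hyperbolic_type_G}: on a neighborhood $V_D$ of $D$ the action is modeled on the Inoue-Hirzebruch action of \S\ref{subsec_G_Inoue} and fixes only the two cusp points, so the internal centers and their orbits stay away from the corners and remain pairwise disjoint throughout the program. This keeps each step within the category of smooth Looijenga pairs and ensures the terminal object is one of the two cases above.
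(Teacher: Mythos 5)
Your proposal is correct and takes essentially the same route as the paper: reduce to the classification of $G$-minimal pairs in Proposition~\ref{prop_G-minimal_pair_cusp} (via Theorem~\ref{thm_Zhang_G}) by running the $G$-equivariant MMP and checking that the anticanonical cycle survives the contractions. The paper's own proof is little more than a citation of that proposition, so your adjunction observation ($E\cdot D=1$ for every $(-1)$-curve, forcing each contraction to be a corner or internal blow-down) and your use of the hyperbolic hypothesis to keep orbits of internal curves disjoint simply make explicit what the paper leaves implicit.
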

\begin{proof}
We take $(Y,D)$ as a $G$-pair, then it is in the case of Proposition \ref{prop_G-minimal_pair_cusp}.  In the hyperbolic $G$-action case, the $G$-birational contraction of $G$-stable divisors  along the divisor $D$ depends on the $G$-action.  
\end{proof}

\begin{rmk}
     In \cite{FE}, Friedman-Engel used $\Lambda(Y,D)$ and the monodromy action  to define monodromy invariants for the smoothing of type III anti-canonical pairs.
     \cite{Jiang_2025} generalized it to the equivariant setting studied in this paper. 
\end{rmk}

\subsection{Toric model of $G$-Looijenga pairs}

In this section  we generalize the process of internal blow-ups and corner blow-ups in the equivariant setting using the main result in \cite[Theorem 0.1]{AW}. We prove that for any hyperbolic finite group $G$ action on a smooth Looijenga pair, there exists a $G$-toric model.  Note that in \cite[Corollary 2.12]{Trepalin} for a smooth rational surface with a $G$ action, there is a $G$-toric model. 

\begin{thm}\label{thm_Abramovich_Wang}(\cite[Theorem 0.1]{AW})
    Let $X$ be a projective variety of finite type over $\cc$, and let $Z\subset X$ be a proper closed subset. Let $G\subset\Aut_{\cc}(Z\subset X)$ be a finite group. Then there is a $G$-equivariant modification $r:X_1\to  X$ such that $X_1$ is nonsingular projective variety, and $r^{-1}(Z_{\red})$ is a $G$-strict divisor of normal crossings.
\end{thm}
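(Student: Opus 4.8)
The plan is to deduce the statement from the existence of a \emph{functorial} (canonical) resolution of singularities and principalization of ideals in characteristic zero, and then to correct the resulting normal crossings divisor so that the $G$-action becomes strict. The key observation is that equivariance is automatic for any resolution procedure that is functorial with respect to smooth morphisms: since each $g \in G$ acts on $X$ (preserving $Z$) as an isomorphism, hence as a smooth morphism $g\colon X \to X$, a canonical resolution $\mathcal R(X)\to X$ satisfies $g^*\mathcal R(X) \cong \mathcal R(X)$ compatibly, so the entire construction is $G$-invariant and $G$ lifts to $X_1$ with $r$ equivariant. Thus the first step would be to invoke canonical embedded resolution (Bierstone--Milman / Villamayor / W\l odarczyk): applied to the ideal sheaf $\mathcal I_Z$ this yields a composite of blow-ups along smooth centers, each center being canonically defined and therefore $G$-invariant, after which $X_1$ is nonsingular and the total transform of $Z_{\red}$ together with the exceptional locus is a normal crossings divisor on which $G$ permutes the irreducible components.

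The second, and genuinely new, step is to upgrade ``normal crossings'' to ``$G$-strict normal crossings.'' After the first step $D = r^{-1}(Z_{\red})$ is normal crossings and $G$ acts by permuting its components, but an element $g$ may stabilize a point $x$ of a multiple intersection while interchanging the local analytic branches of $D$ through $x$; this is exactly the failure of strictness, and it is what prevents the quotient $X_1/G$ from being toroidal. I would remove it by an equivariant blow-up procedure: take the (finite) set of closed strata $\bigcap_{i\in S}D_i$ along which strictness fails, replace it by its $G$-orbit to obtain a $G$-invariant smooth center, and blow up. Because the center is an intersection of components of a normal crossings divisor, it is smooth and its blow-up preserves both smoothness of the ambient space and the normal crossings property of the (total transform of the) divisor, while separating the previously swapped branches.

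The step I expect to be the main obstacle is the termination of this strictification process: one must produce a $G$-invariant numerical measure of non-strictness---for instance built from the codimension of the non-strict locus together with the number of branches that are interchanged at its generic point---and verify that each equivariant blow-up strictly decreases it in an appropriate lexicographic ordering, so that after finitely many steps the divisor is $G$-strict. A secondary technical point to check is that these corrective blow-ups do not reintroduce non-normal-crossings behaviour nor disturb the components coming from $Z_{\red}$; this follows from the fact that all centers are unions of intersection strata of an already normal crossings divisor, so the configuration remains combinatorially controlled throughout. Combining the functorial resolution of the first step with the terminating strictification of the second yields the $G$-equivariant modification $r\colon X_1 \to X$ with $r^{-1}(Z_{\red})$ a $G$-strict normal crossings divisor, as claimed.
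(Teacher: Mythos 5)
This statement is quoted verbatim from Abramovich--Wang \cite[Theorem 0.1]{AW}; the paper supplies no proof of its own beyond the one-line remark that the method is $G$-equivariant blowing-up. Your two-step sketch --- equivariance obtained for free from the functoriality of canonical resolution under smooth morphisms, followed by equivariant blow-ups of intersection strata (taken in a suitable order so the $G$-orbits of the centers are smooth) to upgrade normal crossings to $G$-strict normal crossings --- is precisely the strategy of the cited source, so it is essentially the same approach.
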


The method of proof is to use the $G$-equivariant blowing-up. 
We have the following result which is interesting in its own right. 

\begin{thm}\label{thm_corner_internal_blow-up_G-toric_model}
Let $(Y,D)$ be a smooth  Looijenga pair endowed with a finite group $G$-action. 
We require that $G$ preserves $D$, i.e., $G$ acts on a neighborhood of $D$ in $Y$; and a $G$-orbit of $(-1)$-curves intersecting with the interior of the components of $D$ are disjoint. 
Then we can extend the $G$-action to the corner blow-ups 
and internal blow-ups to get the toric model 
$(Y^{\toric}, D^{\toric})$.  Moreover, $(Y^{\toric}, D^{\toric})$ also admits an action of $G$ such that the quotient $(Y^{\toric}, D^{\toric})/G$, up to resolution of singularities, is the toric model of the quotient $(Y,D)/G=(X,E)$. 
\end{thm}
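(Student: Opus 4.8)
The plan is to make the Gross--Hacking--Keel construction of a toric model \cite[Proposition 1.3]{GHK15} $G$-equivariant, by performing every corner and internal blow-up (or blow-down) along $G$-orbits rather than along single centres. Recall that \cite[Proposition 1.3]{GHK15} produces, from $(Y,D)$, a sequence of corner blow-ups $(Y',D')\to (Y,D)$ such that $(Y',D')$ is obtained from a toric pair $(\overline{Y},\overline{D})$ by internal blow-ups at smooth points of the boundary; equivalently, $(\overline{Y},\overline{D})$ is reached from $(Y',D')$ by successively contracting internal $(-1)$-curves. I would realise each of these two steps $G$-equivariantly and then descend the resulting tower $(Y,D)\leftarrow (Y',D')\to (Y^{\toric},D^{\toric})$ to the quotient.

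First I would make the corner blow-ups equivariant. Since $G$ preserves $D$ and acts on a neighbourhood of it, $G$ permutes the components $D_i$ and hence the nodes $D_i\cap D_{i+1}$ of the cycle; blowing up a $G$-orbit of nodes is therefore a $G$-equivariant morphism, and by Proposition \ref{prop_internal_corner_blow-up_charge}(2)--(3) it keeps $(Y',D')$ a Looijenga pair of the same charge while carrying the $G$-action. Next I would contract the internal $(-1)$-curves equivariantly. The set of $(-1)$-curves meeting the interiors of the $D_i$ is $G$-invariant, and by hypothesis each $G$-orbit consists of pairwise disjoint $(-1)$-curves; such a $G$-stable divisor can be smoothly blown down, so by the criterion in Definition \ref{defn_G-minimal_pair} the contraction of one orbit is a $G$-equivariant birational morphism onto a smooth Looijenga pair, lowering the charge by the size of the orbit (Proposition \ref{prop_internal_corner_blow-up_charge}(1),(3)). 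Iterating orbit by orbit, after finitely many steps the charge reaches $0$, so by Proposition \ref{prop_toric_model} the result $(Y^{\toric},D^{\toric})$ is a toric pair carrying a $G$-action.

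Finally I would pass to quotients. Because every morphism in the tower is $G$-equivariant, the induced diagram of quotients commutes birationally; applying the $G$-equivariant resolution of Theorem \ref{thm_Abramovich_Wang} to the isolated quotient singularities appearing on $Y\setminus D$ (and then descending) keeps everything in the category of smooth pairs with normal-crossing boundary. It then remains to identify $(Y^{\toric},D^{\toric})/G$, after resolution, with the toric model of $(X,E)=(Y,D)/G$: near the boundary the $G$-action is the neighbourhood action of \S\ref{subsec_G_Inoue}, so $\overline{D}/G$ should resolve to the toric boundary cycle $E$ exactly as $V_C/G$ resolves to $V_E$, while the orbits of internal $(-1)$-curves descend to internal $(-1)$-curves on the quotient, exhibiting $(X,E)$ as an internal blow-up of the resolved toric quotient. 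I expect the main obstacle to be precisely this last identification: one must show that the quotient of the toric pair by the induced $G$-action becomes toric after $G$-equivariant resolution of the quotient singularities along $D/G$, and that the internal blow-up structure descends compatibly. This is where the hyperbolic nature of the action (Definition \ref{defn_pair_hyperbolic_type_G}) and the lattice description of \S\ref{subsec_G_Inoue} are essential, together with the technical point of verifying that the disjointness of $G$-orbits of internal $(-1)$-curves is preserved at each intermediate contraction.
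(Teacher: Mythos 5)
Your proposal is correct in substance and uses the same key ingredients as the paper (equivariant corner blow-ups via \cite[Theorem 0.1]{AW}, contraction of $G$-orbits of disjoint internal $(-1)$-curves, the charge bookkeeping of Proposition \ref{prop_internal_corner_blow-up_charge}, and Proposition \ref{prop_toric_model} to recognize the toric pair at charge zero), but it runs the construction in the opposite direction. The paper starts \emph{downstairs}: it forms the quotient $(X,E)=(Y,D)/G$, resolves, builds the toric model $(X^{\toric},E^{\toric})\leftarrow(\widetilde X,\widetilde E)\rightarrow(X,E)$ by the non-equivariant results of \cite{Engel}, \cite[Proposition 1.3]{GHK15}, and then lifts each corner blow-up and internal blow-down to $(Y,D)$ equivariantly. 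You instead work entirely \emph{upstairs}, making the GHK tower equivariant orbit-by-orbit and descending at the end. The paper's route buys the existence of the toric model of $(X,E)$ for free (it is constructed directly), leaving only the compatibility of the lift to check; your route makes the equivariance of each step more transparent (in particular you handle uniformly the case where $G$ cyclically permutes the components of $D$, which the paper treats by a separate case analysis in Proposition \ref{prop_corner_internal_blow-up_G}), but shifts the burden onto the final identification of $(Y^{\toric},D^{\toric})/G$, after resolution, with a toric model of $(X,E)$ --- a point you flag but do not fully discharge, and which the paper also leaves at the level of a remark ("the quotient is not exactly $(X^{\toric},E^{\toric})$, but we can obtain it by resolution of singularities"). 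One small point to make explicit in your version: after contracting one $G$-orbit of internal $(-1)$-curves, new $(-1)$-curves can appear, and you need the hypothesis on disjointness of orbits (or the near-freeness of the action off $D$) to persist at each stage; the paper asserts this from the hyperbolic/almost-free nature of the action rather than proving it, so you are not worse off, but it is the one genuine gap common to both arguments.
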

\begin{proof}
We start from the quotient $(X,E)=(Y,D)/G$. From Proposition \ref{prop_G-minimal_pair_cusp}, there are two cases based on the Picard rank of $\Pic(Y)^G$. In any case the quotient $X=Y/G$ is a rational surface with quotient singularities.   By performing resolution of singularities and a choice $E\in |-K_X|$ we have a (probably singular) Looijenga pair $(X,E)$. 
We perform corner blow-ups, and then internal blow-ups to get the toric model of $(X,E)$
$$(X^{\toric}, E^{\toric})\stackrel{\text{internal blow-ups}}{\longleftarrow}  (\widetilde{X}, \widetilde{E})\stackrel{\text{corner blow-ups}}{\longrightarrow} (X,E).$$
This can be done since after several corner blow-ups we can use the result in \cite{Engel}, \cite[Proposition 1.3]{GHK15}.

Next we argue that the corner blow-ups and internal blow-ups can be lifted to $(Y,D)$ under the $G$-action. 
From the assumption  there is a neighborhood $V_D\subset Y$ such that $G$ preserves $V_D$, then the finite group $G$ lies in $\Aut_{\cc}(D\subset Y)$.  
We first look at the corner blow-ups. 
The corner blow-ups $(\widetilde{X}, \widetilde{E})\to (X,E)$ can be lifted to corner blow-ups $(\widetilde{Y}, \widetilde{D})\to (Y,D)$ $G$-equivariantly. 
Let $p\in D_i\cap D_{i+1}$ be a corner in $D$, from the action of $G$ on the curves $D$ and $V_D$ in 
\S \ref{subsec_G_Inoue}, $G$ fixes the corner $p$, so from 
\cite[Theorem 0.1]{AW},  there is a $G$-equivariant corner blow-up
$$(Y_p, D_p)\to (Y,D)$$
along $p$.  This morphism $(Y_p, D_p)\to (Y,D)$ induces a corner blow-up for $(X,E)$.  
We can perform this process to get a Looijenga pair $(\widetilde{Y}, \widetilde{D})$ with the desired  number of components 
$\widetilde{D}_i$ in $\widetilde{D}$ we want.  From Proposition \ref{prop_internal_corner_blow-up_charge},  corner blow-ups do not change the charge $Q(Y, D)$. 

For the Looijenga pair  $(\widetilde{Y}, \widetilde{D})$, we need to do the internal blow-downs to get the toric model 
$(Y^{\toric}, D^{\toric})$, and the blow-down maps are $G$-equivariant. 
From Proposition \ref{prop_internal_corner_blow-up_charge}, each time internal blow-down along a component $D_i\subset D$ changes the 
negative self-intersection number $d_i$ to $d_i-1$.  

From the $G$-action on the neighborhood $V_D$, $G$ only fix the corners of $D$,   so  there are no 
$G$-invariant exceptional $\pp^1$'s intersecting with the corners in $D$. Thus, a $G$-orbit of the exceptional $\pp^1$'s  intersecting with some interior  $D_j$'s in $D$ must contain the same number of $\pp^1$'s. They are disjoint since the action $G$ is almost  free on $\widetilde{Y}\setminus \widetilde{D}$. 
Therefore, a $G$-stable exceptional divisor contains a $G$-orbit of exceptional curves. 
Then we do  certain 
$G$-equivariant internal blow-downs of the $G$-orbits and  get a Looijenga pair $(Y^{\toric}, D^{\toric})$ with 
$Q(Y^{\toric}, D^{\toric})=0$.  From Proposition \ref{prop_toric_model}, the Looijenga pair  $(Y^{\toric}, D^{\toric})$ 
is toric, which is the toric model we want. 
We represent the above construction in  the following diagram:
\[
\xymatrixcolsep{6pc}\xymatrix{
(Y^{\toric}, D^{\toric})\ar[d]_{\pi}\ar@{<-}[r]^-{\text{internal blow-ups}} &  (\widetilde{Y}, \widetilde{D})\ar[d]_{\pi}\ar[r]^{\text{corner blow-ups}}& (Y,D)\ar[d]_{\pi}\\
(X^{\toric}, E^{\toric})\ar@{<-}[r]^-{\text{internal blow-ups}}&  (\widetilde{X}, \widetilde{E})\ar[r]^{\text{corner blow-ups}}& (X,E).
}
\]
Note the quotient $(Y^{\toric}, D^{\toric})/G$ is not exactly $(X^{\toric}, E^{\toric})$, but we can obtain $(X^{\toric}, E^{\toric})$ by resolution of singularities. 
\end{proof}

\begin{example}
We look at one example in the proof of  Proposition \ref{prop_G-minimal_pair_cusp}. 
Let $Y=Bl_{3}\pp^2$ be the degree $6$ del Pezzo surface.  $D\in |-K_Y|$ contains $6$ $(-1)$-curves intersecting transversally. Thus, $(Y,D)$ is a Looijenga pair.

 The group $\mu_6=\langle g\rangle$ acts on $Y$ and $Y^{\mu_6}$ has three fixed points with type $\frac{1}{2}(1,1), \frac{1}{3}(1,2)$, and $\frac{1}{6}(1,5)$ respectively. The quotient $X=Y/\mu_6$ is a Gorenstein log del Pezzo surface with three types $\frac{1}{2}(1,1), \frac{1}{3}(1,2)$, and $\frac{1}{6}(1,5)$ singularities.
The quotient $E=D/\mu_6$ is a rational nodal curve with self-intersection number $-1$.  We have $K_X^2=1$ and $E^2=-1$.
The quotient $(X,E)$ is not toric  since the charge $Q(X,E)=12+2+1-3=12$.  Therefore, we need to perform corner blow-ups along $E$, and then $12$-internal blow-downs to get the toric model. 

We first perform two corner blow-ups along the rational nodal curve $E$, and get 
$(X_1, E_1)$ so that $E_1$ contains four components with negative self-intersection sequence 
$(6, 2,1,3)$. Then performing $12$ times internal blow-downs  get the toric model 
$(X^{\toric}, E^{\toric})$ with $E^{\toric}$ the anti-canonical divisor whose negative self-intersection sequence is  given by $(0,-2, 0,2)$.   This is a minimal toric pair and 
$X^{\toric}$ is the degree $2$ Hirzebruch surface. 
\end{example}

\begin{prop}\label{prop_corner_internal_blow-up_G}
Let $(Y,D)$ be a negative  definite Looijenga pair endowed with a finite group $G$-action. 
Suppose that the action is hyperbolic.    Then we can extend the $G$-action to the corner blow-ups 
and internal blow-ups, such that the toric model 
$(Y^{\toric}, D^{\toric})$ also admits an action of $G$ such that the quotient $(Y^{\toric}, D^{\toric})/G$, up to resolution of singularities, gives the toric model of the 
quotient $(Y,D)/G=(X,E)$. 
\end{prop}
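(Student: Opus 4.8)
The plan is to read Proposition \ref{prop_corner_internal_blow-up_G} as the specialization of Theorem \ref{thm_corner_internal_blow-up_G-toric_model} to the hyperbolic case, so that the proof reduces to checking that a hyperbolic $G$-action supplies the two running hypotheses of that theorem---that $G$ preserves $D$, and that each $G$-orbit of internal $(-1)$-curves is disjoint---after which the theorem applies verbatim.

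First I would unpack Definition \ref{defn_pair_hyperbolic_type_G}. By hypothesis there is an open neighborhood $V_D\subset Y$ of $D$ isomorphic to the standard cusp neighborhood $V_C$ of \S\ref{subsec_G_Inoue}, with the $G$-action on $V_D$ identified with the explicit action on $V_C$ given by \eqref{eqn_action1} and \eqref{eqn_action2}. In particular $G$ carries $D$ to itself, so $G\subset\Aut_{\cc}(D\subset Y)$, which is the first hypothesis of Theorem \ref{thm_corner_internal_blow-up_G-toric_model}. Reading off the same formulas, the only points of $D$ fixed by a nontrivial element of $G$ are the corners $D_i\cap D_{i+1}$, while every interior point of a component $D_j^{\circ}$ has trivial stabilizer.

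Next I would establish the disjointness hypothesis. Since $(Y,D)$ is negative definite, $D$ contracts to a cusp $\pi\colon (Y,D)\to(\overline{Y},p)$, and the hyperbolic assumption forces the $G$-action on $Y\setminus D$ to have only isolated quotient singularities, i.e.\ to be free away from a finite set. If $E$ is a $(-1)$-curve meeting some $D_j^{\circ}$, then the stabilizer of $E$ is trivial, since a nontrivial stabilizing element would fix the point $E\cap D\in D_j^{\circ}$, contradicting that $G$ fixes only corners. Hence the orbit $\{g\cdot E\}_{g\in G}$ consists of $|G|$ distinct curves meeting $D$ in the $|G|$ distinct points of the orbit of $E\cap D$, and the almost-free action separates these curves away from the isolated fixed locus, so they are pairwise disjoint. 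This is the second hypothesis; by Proposition \ref{prop_pair_G_hyperbolic} the $G$-minimal model moreover lies in the classified cases of Proposition \ref{prop_G-minimal_pair_cusp}, which makes the quotient $(X,E)=(Y,D)/G$ a (possibly singular) Looijenga pair after resolving its isolated quotient singularities.

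With both hypotheses verified I would invoke Theorem \ref{thm_corner_internal_blow-up_G-toric_model} directly: the corner blow-ups are supported at the $G$-fixed corners and lift $G$-equivariantly by \cite[Theorem 0.1]{AW}, while the internal blow-downs can be performed one disjoint $G$-orbit at a time until $Q(Y^{\toric},D^{\toric})=0$, whence $(Y^{\toric},D^{\toric})$ is toric by Proposition \ref{prop_toric_model}; the induced $G$-action and the identification of the quotient with the toric model of $(X,E)$ after resolving the isolated quotient singularities are then exactly as in the theorem. The main obstacle is the disjointness step, because the simultaneous equivariant blow-down of an orbit is legitimate only for a $G$-stable collection of \emph{disjoint} $(-1)$-curves; this is precisely where the isolated-quotient-singularity clause of the hyperbolic hypothesis, rather than mere $G$-invariance of $D$, is indispensable.
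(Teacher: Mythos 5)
Your proposal is correct and follows essentially the same route as the paper: the paper's proof likewise observes that a hyperbolic action on a negative definite pair satisfies the hypotheses of Theorem \ref{thm_corner_internal_blow-up_G-toric_model} and then re-traces that theorem's corner-blow-up/internal-blow-down argument, with your verification of the disjoint-orbit hypothesis being a somewhat more careful version of what the paper asserts. The only detail the paper adds that you elide is the non-minimal-period case, where the $\UU_M/\langle\sigma\rangle$ part of $G$ rotates the cycle so that corners are permuted rather than fixed and the corner blow-ups must be performed on $G$-orbits of corners; this is a harmless adjustment, not a gap.
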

\begin{proof}
The action of the finite group $G$ on the Looijenga pair  $(Y,D)$ is hyperbolic,  which means that the  $G$-action   on $Y-D$ has only isolated quotient singularities.  The pair $(Y, D)$ is negative definite,  so  the negative self-intersection  numbers 
$d_i\ge 2$ for any $i$, and some $d_j\ge 3$.    Therefore, we can contract the divisor $D$ to a cusp $p\in Y^\prime$.  Therefore, the action of $G$ on $(Y,D)$ satisfies the conditions in Theorem \ref{thm_corner_internal_blow-up_G-toric_model}.  Thus, we are done. 

In order to remember the hyperbolic action, we restate the proof in this case. 
We know that $G$ acts on the neighborhood $V_D$ of $D$ the defined action in (\ref{eqn_action1}) and (\ref{eqn_action2}). The finite group $G$ contains two parts, one is in $\UU_M/\sigma$ and the other is in $\overline{M}/M$ defined in \S \ref{subsec_G_Inoue}. If the sequence $(d_0,\cdots, d_{n-1})$ is not a minimal period, then $\UU_M/\sigma$ acts by rotating the minimal period $(d_0,\cdots, d_k)$, and the $\overline{M}/M$ just acts on $V_D$ as in (\ref{eqn_action1}) and (\ref{eqn_action2}).
We perform corner blow-ups, and then internal blow-ups to get the toric model of $(X,E)$
$$(X^{\toric}, E^{\toric})\stackrel{\text{internal blow-ups}}{\longleftarrow}  (\widetilde{X}, \widetilde{E})\stackrel{\text{corner blow-ups}}{\longrightarrow} (X,E).$$

Next we argue that the corner blow-ups and internal blow-ups can be lifted to $(Y,D)$ under the $G$-action. 
Since there is a neighborhood $V_D\subset Y$ such that $G$ preserves $V_D$, then the finite group $G$ lies in $\Aut_{\cc}(D\subset Y)$.  
We first look at the corner blow-ups. 
The corner blow-ups $(\widetilde{X}, \widetilde{E})\to (X,E)$ can be lifted to corner blow-ups $(\widetilde{Y}, \widetilde{D})\to (Y,D)$ $G$-equivariantly. 
Let $p\in D_i\cap D_{i+1}$ be a corner in $D$, from the action of $G$ on the curves $D$ and $V_D$ in 
\S \ref{subsec_G_Inoue}, $G$ fixes the corner $p$, so from 
\cite[Theorem 0.1]{AW},  there is a $G$-equivariant corner blow-up
$$(Y_p, D_p)\to (Y,D)$$
along $p$, and from \cite[Lemma 2.2]{AW}, the $G$-action on the exceptional $\pp^1$ is similar to (\ref{eqn_action2}).  This morphism $(Y_p, D_p)\to (Y,D)$ induces a corner blow-up for $(X,E)$.  If the sequence $(d_0,\cdots, d_{n-1})$ is a minimal period, then it is just from \cite[Lemma 2.2]{AW}.  If the sequence $(d_0,\cdots, d_{n-1})$ is not a minimal period, then $G$ rotates the minimal period $(d_0,\cdots, d_k)$, and we have the same number  (corresponding to the cyclic order of $G$ in the part of $\UU_M/\sigma$) of corners to the same corner in $E$. The corner blow-up of $(X,E)$ still lifts to $(Y,D)$, but do the corner blow-ups on $(Y,D)$ for a $G$ orbit. 
We can perform this process to get a Looijenga pair $(\widetilde{Y}, \widetilde{D})$ with the desired  number of components 
$\widetilde{D}_i$ in $\widetilde{D}$ we want.  From Proposition \ref{prop_internal_corner_blow-up_charge},  corner blow-ups do not change the charge $Q(Y, D)$. 

For the Looijenga pair  $(\widetilde{Y}, \widetilde{D})$, we need to do the internal blow-downs to get the toric model 
$(Y^{\toric}, D^{\toric})$, and the blow-down maps are $G$-equivariant. 
From Proposition \ref{prop_internal_corner_blow-up_charge}, each time internal blow-down along a component $D_i\subset D$ changes the 
negative self-intersection number $d_i$ to $d_i-1$.  

From the $G$-action on the neighborhood $V_D$, $G$ only fix the corners of $D$.  So  there are no 
$G$-invariant exceptional $\pp^1$'s in $\widetilde{Y}$, and a $G$-orbit of the exceptional $\pp^1$'s  intersecting with some interior  $D_j$'s in $D$ must contain the same number of $\pp^1$'s. They are disjoint since the action $G$ is free on $\widetilde{Y}\setminus \widetilde{D}$ and on $\widetilde{D}$ except some corners. 
Then we do  certain 
$G$-equivariant internal blow-downs and  get a Looijenga pair $(Y^{\toric}, D^{\toric})$ with 
$Q(Y^{\toric}, D^{\toric})=0$.  
\end{proof}

 \section{Equivariant Type III canonical degenerations}\label{sec_typeIII-degeneration}

 In this section we generalize the Type III degeneration of anticanonical pairs in \cite{Engel}, \cite{FE} to the equivariant setting. 

\subsection{Universal deformation of Inoue-Hirzebruch surfaces}\label{subsec_universal_deformation_G}

Let $(\bbV,p,p^\prime)$ be the Inoue-Hirzebruch surface with two dual cusp singularities. 
Looijenga \cite[III Corollary 2.3]{Looijenga} proved that the surface $\bbV$ admits a universal deformation.  In particular,  Looijenga proved that $\bbV$ admits  smoothings.   Suppose that there is a finite group 
$G$ action on the Inoue-Hirzebruch surface $\bbV$.  The proof in \cite[II \S 2]{Looijenga} works in the $G$-equivariant case, therefore implies that $\bbV$ admits 
a universal $G$-deformation. 

Let us assume that 
$$\bbsV\to \Delta$$
is a $G$-equivariant smoothing of  $(\bbV,p,p^\prime)$ along the cusp $p^\prime$. 
So $\bbsV_0=\bbV$, and the cusp $p$ stays constant.  Any fiber $\bbsV_t (t\neq 0)$ is a surface with a  cusp singularity $p=p_t$ and possible ADE singularities. 

We resolve $p_t$ in the family under the group $G$-action and get a family
$$\pi: \sY\to \Delta$$
such that 
$\sY_0=\bV_0$ and $(\bV_0, p^\prime)$ is the partially contracted Inoue-Hirzebruch surface from $(V, D, D^\prime)$ with only cusp singularity $p^\prime$. 
This $G$-equivariant resolution exists, since on each individual fiber of $\bbsV\to \Delta$, the minimal resolution is $G$-equivariant by the
analytic-local description of the cusp neighborhood in \S \ref{sec_Inoue-Hirzebruch}. So,
the simultaneous minimal resolution is automatically $G$-equivariant.
For $t\neq 0$, $\sY_t$ is a simply connected surface with an anti-canonical divisor $D\in |-K_Y|$.  Thus $\sY_t$ is  a rational surface with possible ADE singularities. 
Since we do a $G$-equivariant simultaneous resolution of the singularities $p_t$ of the family $\bbsV\to \Delta$, for each fiber $\sY_t$ in the resolution family $\sY\to \Delta$, there must 
exist a subgroup $H\subset G$ acting faithfully on the fiber $\sY_t$.  Our group $G$ acts originally on the surface $\bbV$ and  $\bbsV$. After taking the $G$-equivariant resolution, 
the group $H$ acts on the fiber $\sY_t$ which preserves the anti-canonical divisor $D_t\in |-K_{\sY_t}|$. 

\subsection{Type III equivariant degeneration}\label{subsec_type_III_equivariant}

We are ready to introduce the Type III degeneration pairs.   
Let $\pi: \sY\to \Delta$ be the $G$-equivariant family constructed above.
Here is the definition of Type III degeneration pairs in \cite{Engel}. 
\begin{equation}\label{eqn_XX_0_engel}
\XX_0=\bigcup_{i=0}^{f-1}V_i, 
\end{equation} 
where 
\begin{enumerate}
\item $V_0=(V, D, D^\prime)$ is the compact Inoue-Hirzebruch surface. For $i>0$, the normalization $\widetilde{V}_i$ of $V_i$ is a smooth rational surface.  
\item We let $D_{ij}$ be the irreducible double curve of $\XX_0$ lying on $V_i$ and $V_j$ (in the case $V_i$ is not normal, we may have $i=j$). 
Let $D_i=\cup D_{ij}\subset V_i$ and $\widetilde{D}_i=\pi^{-1}(D_i)$ under $\pi: \widetilde{V}_i\to V_i$. Then $(\widetilde{V}_i, \widetilde{D}_i)$ is a Looijenga pair.  For 
$i=0$, $D_0=D^\prime$. 
\item  (Triple point formula) 
For the double curve $D_{ij}$ above, 
$$(D_{ij}|_{\widetilde{V}_i})^2+(D_{ij}|_{\widetilde{V}_j})^2=
\begin{cases}
-2, &  D_{ij} \text{~are smooth};\\
0, &  D_{ij} \text{~are nodal}.
\end{cases}
$$
\item The dual complex $\Gamma(\XX_0)$ of  $\XX_0$ is a triangulation of sphere.  
\end{enumerate}
From Friedman-Miranda \cite{FM}, $(\XX_0, D)$ admits a smoothing $\pi: (\sY,D)\to \Delta$ which is d-semistable. 

\begin{rmk}
    It is useful to recall the dual complex $\Gamma(\XX_0)$ here.  There are three data:
    \begin{enumerate}
        \item The vertices of $\Gamma(\XX_0)$ are given by $\{v_0, \cdots, v_{f-1}\}$ corresponding to each component $V_i$ in $\XX_0$.
        \item The edges $e_{ij}=(v_i, v_j)$ correspond to the double curves $D_{ij}$.
        \item The faces (triangles) $f_{ijk}=(v_i, v_j, v_k)$ correspond to triple points. 
    \end{enumerate}
    Each triangular face $f_{ijk}$ is integral-affine equivalent to a basis triangle, i.e., a lattice triangle of area $1/2$.  Therefore, integral-affine structures on the faces glue to give the integral-affine surface $\Gamma(\XX_0)$. 
\end{rmk}

The generic fiber of $\pi$ above is $(Y,D)$ the Looijenga pair.   The $G$-action on  $(Y,D)$ is hyperbolic, which means that $G$ acts on a neighborhood of $V_D\subset Y$ of $D$ as in  \S \ref{subsec_G_Inoue} so that the quotient is still a cusp. 
Our goal is to make the construction of Friedman-Miranda \cite{FM} for the Type III canonical degeneration pair work in the  $G$-equivariant  setting.  We construct the $G$-action on $\XX_0$. 
\begin{construction}\label{eqn_construction1}
    The $G$-action on $\XX_0$ is defined as follows.  Recall that the group $G\subset \Aut(V_0)$ acts on the Inoue-Hirzebruch surface in \S \ref{subsec_G_Inoue}. 
For any $D_{0j}\subset D^\prime=D_0=\cup_{i}D_{0i}$, whenever $g\in G$ acts on $D_{0j}$ as in  \S \ref{subsec_G_Inoue}, then $g$ acts on $D_{j0}$ the same way.   

If $d=(d^\prime_0,\cdots, d_{s-1}^\prime)$ is a minimal period, then the $\UU_M/\sigma$ part of the  $G$-action is trivial, and the $\overline{M}/M$ part acts on $D^\prime$ by the form  in (\ref{eqn_action1}) and (\ref{eqn_action2}).  Thus, the $G$-action on $V_i$ only acts on the $D_
i$ connecting with $D^\prime=D_0$. 

If $d=(d^\prime_0,\cdots, d_{s-1}^\prime)$ is not a minimal period, then the $\UU_M/\sigma$ part of the  $G$-action rotates the minimal period $(d_0^\prime,\cdots, d_{l-1}^\prime)$, and the $\overline{M}/M$ part acts on $D^\prime$ by the form  in (\ref{eqn_action1}) and (\ref{eqn_action2}). Then in this case
the $G$-action on $V_i$ for $i>0$ permutes the components $V_i$ such that it is compatible with the action of the cyclic part $\UU_M/\sigma$ if $G$ acts on $D^\prime=D_0$.

In any case, we require that $\overline{\XX}_0=\XX_0/G$ is also a Type III degeneration pairs with $\overline{V}_0=V_0/G$ the quotient of the Inoue surface $V$. From the hyperbolic action, $\overline{V}_0=V_0/G$ contains fixed points on the corners of the resolution cycles of the two cusps.  Taking suitable resolution of singularities we have an Inoue surface corresponding to the quotient cusps under the $G$-action. Of course, when taking resolution of singularities along the $G$ fixed points, the other components $V_i$ do the same process of resolutions accordingly. 
\end{construction}

\begin{defn}\label{defn_typeIII_G}
We construct the following $G$-equivariant Type III degeneration pairs
\begin{equation}\label{eqn_XX_0}
\XX_0=\bigcup_{i=0}^{f-1}V_i, 
\end{equation} 
where 
\begin{enumerate}
\item $V_0=(V, D, D^\prime)$ is the compact Inoue-Hirzebruch surface which admits a $G$-action as in \S \ref{subsec_G_Inoue}, such that the quotient $V_0/G$ is, after suitable resolution of singularities,  another Inoue-Hirzebruch surface $(W, E, E^\prime)$. .  For $i>0$, the normalization $\widetilde{V}_i$ of $V_i$ is a smooth rational surface. 
\item We let $D_{ij}$ be the irreducible double curve of $\XX_0$ lying on $V_i$ and $V_j$ (in the case $V_i$ is not normal, we may have $i=j$). 
Let $D_i=\cup D_{ij}\subset V_i$ and $\widetilde{D}_i=\pi^{-1}(D_i)$ under $\pi: \widetilde{V}_i\to V_i$. Then $(\widetilde{V}_i, \widetilde{D}_i)$ is a Looijenga pair.  For 
$i=0$, $D_0=D^\prime$.  
\item The $G$-action on $\XX_0$ is from Construction \ref{eqn_construction1}.
\item  (Triple point formula) 
For the double curve $D_{ij}$ above, 
$$(D_{ij}|_{\widetilde{V}_i})^2+(D_{ij}|_{\widetilde{V}_j})^2=
\begin{cases}
-2, &  D_{ij} \text{~are smooth};\\
0, &  D_{ij} \text{~are nodal}.
\end{cases}
$$

\item The dual complex $\Gamma(\XX_0)$ of  $\XX_0$ is a triangulation of sphere.  Furthermore, $\Gamma(\XX_0)$ admits a $G$-action such that 
$\Gamma(\overline{\XX}_0)=\Gamma(\XX_0)/G$ is also a triangulation of sphere.
There are two cases for the action of the dual complex.  If $d=(d^\prime_0,\cdots, d_{s-1}^\prime)$ is a minimal period, then the $G$-action on the dual complex is topologically trivial.
If $d=(d^\prime_0,\cdots, d_{s-1}^\prime)$ is not a minimal period, then the $G$-action on the dual complex $\Gamma(\XX_0)$ is not trivial, it will permute the components $V_i$ connecting with $D^\prime$ in $V_0$. In this case all the other components $V_j$'s not connecting with $V_0$ permute in a compatible way. 
\end{enumerate}
\end{defn}

\begin{rmk}
    From the action of $G$ on $\XX_0$, the quotient $\XX_0/G=\overline{\XX}_0$ is a Type III degeneration pairs with quotient singularities.  We take the resolution of singularities along the corners of $D^\prime$ and $D$, at the same time add new components $V_k$'s corresponding to the new components produced by the corner blow-ups, we get a Type III degeneration pair $\widetilde{\overline{\XX}}_0=\bigcup_{i=0}^{r-1}\tilde{V}_i$ such that $\tilde{V}_0=(W,E, E^\prime)$.

    From \cite[Theorem 2.26]{FE}, the deformation functor of $\widetilde{\overline{\XX}}_0$, keeping the divisor $E$ with normal crossings, has tangent space $\Ext^1(\Omega_{\widetilde{\overline{\XX}}_0}(\log E), \sO_{\widetilde{\overline{\XX}}_0})$.   \cite[Theorem 2.26]{FE} proved that there exists a unique smoothing component $(M,0)$ of $(\widetilde{\overline{\XX}}_0, E)$, and the discriminant locus in $M$ is a smooth hypersurfacce.  Thus, our  quotient pair $(\overline{\XX}_0, D/G)$ lies in the discriminant locus, and can be deformed to $(\widetilde{\overline{\XX}}_0, E)$. Up to deformation,  we can use the dual complex $\Gamma(\widetilde{\overline{\XX}}_0)$ to replace with $\Gamma(\overline{\XX}_0)$.
\end{rmk}

\begin{thm}\label{thm_smoothing_XX_0}
There exists a $G$-equivariant  smoothing  family $\pi: \XX\to \Delta$
such that $\sD\in |-K_{\XX}|$,   $\sD_t=D_t\in Y_t$ for a rational surface  $Y_t$ when $t\neq 0$,  and $\XX_0=\pi^{-1}(0)$ is the variety in (\ref{eqn_XX_0}) with $\sD_0=D_0$. 
\end{thm}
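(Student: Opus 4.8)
The plan is to establish the existence of a $G$-equivariant d-semistable smoothing of the central fiber $\XX_0$ by combining the non-equivariant smoothing theory of Friedman-Miranda \cite{FM} with the $G$-action constructed in Construction \ref{eqn_construction1} and the deformation-theoretic input recorded in the remark preceding the statement. The key point is that all of the relevant deformation-theoretic data are functorial, so a $G$-action on $\XX_0$ induces a $G$-action on the relevant obstruction and tangent spaces, and one smooths $G$-equivariantly by passing to $G$-invariants.

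First I would set up the logarithmic deformation theory of the pair $(\XX_0, \sD_0)$ keeping the anti-canonical divisor with normal crossings, whose tangent space is $\Ext^1(\Omega_{\XX_0}(\log \sD_0), \sO_{\XX_0})$ and whose obstructions lie in $\Ext^2$. By \cite[Theorem 2.26]{FE}, applied to the quotient degeneration $\widetilde{\overline{\XX}}_0$, there is a unique smoothing component $(M,0)$ with smooth discriminant hypersurface; by the preceding remark the quotient pair $(\overline{\XX}_0, \sD_0/G)$ sits inside this component and deforms to $\widetilde{\overline{\XX}}_0$. Next I would observe that the $G$-action on $\XX_0$ from Construction \ref{eqn_construction1} acts $\cc$-linearly on $\Ext^1(\Omega_{\XX_0}(\log \sD_0), \sO_{\XX_0})$ and commutes with the obstruction map, so the $G$-fixed subspace of the smoothing parameter space parametrizes precisely the $G$-equivariant deformations. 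Choosing a generic smoothing direction inside this $G$-invariant subspace produces a one-parameter family $\pi: \XX \to \Delta$ with $G$ acting fiberwise.

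The d-semistability condition of \cite{FM} (equivalently, the triangulation/triple-point data recorded in Definition \ref{defn_typeIII_G}) is preserved by the $G$-action because the triple point formula and the dual complex $\Gamma(\XX_0)$ are $G$-equivariant by construction; thus the existence theorem of Friedman-Miranda applies $G$-equivariantly. I would then verify that for $t \neq 0$ the fiber $\XX_t = Y_t$ is a smooth (or ADE) rational surface carrying $\sD_t = D_t \in |-K_{Y_t}|$, with $G$ (or the faithfully-acting subgroup $H \subset G$ from \S \ref{subsec_universal_deformation_G}) acting on it preserving the anti-canonical divisor, exactly as in the universal deformation discussion. The relative anti-canonical class $\sD \in |-K_{\XX}|$ is cut out by the global anti-canonical section, which exists and is $G$-invariant since $\XX_0$ is anticanonical and $G$ preserves $\sD_0$.

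The main obstacle I expect is verifying that the smoothing can be performed \emph{equivariantly} rather than merely that a smoothing exists: one must ensure that the $G$-invariant part of the smoothing direction in $\Ext^1(\Omega_{\XX_0}(\log \sD_0), \sO_{\XX_0})$ is nonempty and, more delicately, that it meets the smoothing locus (not just the locally-trivial or partially-smoothing locus). Here the uniqueness and smoothness of the discriminant in \cite[Theorem 2.26]{FE} is crucial: since the smoothing component is unique and its discriminant is a smooth hypersurface, the $G$-action (which fixes the component as a set) must act on it, and a general $G$-invariant vector transverse to the discriminant yields a genuine smoothing. The compatibility with the quotient — that $\XX/G$, after resolution, recovers the Type III degeneration $\widetilde{\overline{\XX}}_0$ of \cite{FE} whose generic fiber is the Looijenga pair $(X,E)$ dual to the quotient cusp — then follows from the $G$-equivariance of the whole construction together with Proposition \ref{prop_corner_internal_blow-up_G}.
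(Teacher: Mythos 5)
Your overall strategy --- run the Friedman--Miranda deformation theory and pass to $G$-invariants --- is the same skeleton as the paper's proof, but you skate over the two points that the paper actually has to work for, and both are genuine gaps. First, $d$-semistability is not ``preserved by the $G$-action because the triple point formula and the dual complex are $G$-equivariant.'' The condition $T^1_{\XX_0}\cong\sO_Q$ is the triviality of a line bundle on the double locus, and it depends on the analytic gluing maps $\varphi_{ij}:D_{ij}\subset V_i\to D_{ij}\subset V_j$, not just on the combinatorial data $\Gamma(\XX_0)$ and the self-intersection numbers. The paper devotes an internal lemma to showing that one can \emph{choose} the gluings (adjusting by elements $\lambda\in\Aut^0(D_{ij}^0)$ acting through $G_{ij}\cong\cc^*\subset\Pic^0(E)$) so that $\sO_E(\XX_0)=\sO_E$ \emph{and} the choice is compatible with the $G$-action. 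Without that step you do not know that a $d$-semistable $G$-model of $\XX_0$ exists at all, so the Friedman--Miranda machinery cannot even be started equivariantly.

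Second, and more seriously, your resolution of what you correctly identify as the main obstacle is circular. You argue that since the smoothing component is unique and its discriminant is a smooth hypersurface, $G$ preserves it, and ``a general $G$-invariant vector transverse to the discriminant yields a genuine smoothing.'' But $G$ preserving the component as a set does not produce a $G$-invariant transverse vector: if $G$ acted on the normal direction to the locally-trivial locus $N_1$ through a nontrivial character, the entire $G$-fixed subspace of $\T^1_{\XX_0}$ would lie inside $N_1$ and every $G$-equivariant deformation would be locally trivial. The existence of a $G$-invariant smoothing direction is exactly the statement $H^0(\XX_0,T^1_{\XX_0})^G\neq 0$, and the paper proves it by an explicit computation: $T^1_{\XX_0}=\sO_Q$, the $G$-action on the double curves is coordinate-wise by roots of unity via (\ref{eqn_action1})--(\ref{eqn_action2}) (with an extra permutation of components when the cycle is not a minimal period), so sufficiently high powers of the coordinate functions are invariant. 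Your proposal needs this computation (or an equivalent representation-theoretic argument identifying the character of $G$ on $H^0(T^1_{\XX_0})$); appealing to genericity inside the invariant subspace presupposes the conclusion. A smaller point: you phrase the deformation theory in terms of $\Ext^1(\Omega_{\XX_0}(\log\sD_0),\sO_{\XX_0})$ applied to the quotient via \cite[Theorem 2.26]{FE}, whereas the paper works upstairs on $\XX_0$ with $\T^i_{\XX_0}=\Ext^i(\Omega^1_{\XX_0},\sO_{\XX_0})$ and the vanishings $H^2(T^0_{\XX_0})=0$, $H^0(\Lambda^1_{\XX_0})=0$; the log version would also need the analogous vanishings to be checked, which you do not do.
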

\begin{proof}
We prove the theorem by first generalizing  \cite[Lemma 2.9]{FM}.  Let $T_{\XX_0}^0$ and $T_{\XX_0}^1$ be the tangent sheaves so that
$$T_{\XX_0}^i=\sE xt^i(\Omega_{\XX_0}^1, \sO_{\XX_0}).$$
The the global tangent spaces are defined by
$$\T_{\XX_0}^i=\Ext^i(\Omega_{\XX_0}^1, \sO_{\XX_0}).$$
Recall that the variety $\XX_0$ is called $d$-semi-stable, if  
$T_{\XX_0}^1=\sO_{Q}$, where $Q\subset \XX_0$ is the singular locus.  We have that 

\begin{lem}
There always exists a construction  $\XX_0$ in (\ref{eqn_XX_0}) with a finite group  $G$-action such that 
 it is $d$-semi-stable. 
\end{lem}
\begin{proof}
We generalize \cite[Proposition (5.14)]{Friedman} in this setting by taking care of the $G$-action. 
Recall that $D_{ij}$ is the double curve in $V_i$ and $V_j$.  $D_i=\cup D_{ij}$ and we set 
$E:=\cup D_{i}$.   We let 
$D_{ij}^{0}:=D_{ij}-T$, where $T$ is the triple point locus. 
We know that $D_{ij}$ is smooth ($D_{ij}$ is not a nodal rational curve since $n>1$).
We show that there exists a choice of isomorphisms
$$\varphi_{ij}: D_{ij}^{0}\subset V_i\stackrel{\sim}{\rightarrow} D_{ij}^{0}\subset V_j$$
where the extension  $\overline{\varphi}_{ij}$ of $\varphi_{ij}$ to $D_{ij}$ fixes the triple points and the surface 
$\XX_0$ is $d$-semi-stable by the gluing of $\overline{\varphi}_{ij}$. 
The triple point formula implies that 
$(D_{ij}|_{\widetilde{V}_i})^2$ or $(D_{ij}|_{\widetilde{V}_j})^2$ is nonzero.  Our finite group  $G$ acts on $\XX_0$, and the surface $\XX_0=\cup_{i=0}^{n}V_i$,  where $V_0=(V, D, D^\prime)$ is an Inoue-Hirzebruch surface with a $G$-action.  For $i>0$, each 
$\tilde{V}_i\to V_i$ is a rational surface. 
We follow the same proof as in \cite[(5.14)]{FM}. Let 
$$G_{ij}=\{\text{divisors of degree zero on~} D_{ij}^{0}\}/\div(f)$$
where the $f$ are functions on $D_{ij}$, which are not zero or $\infty$ at the triple points $t_1, t_2$, and 
$f(t_1)=f(t_2)$.  Then we have that 
$$G_{ij}\cong \cc^*\subset \Pic^0(D_{ij})\subset \Pic^0(E).$$
Let $\widetilde{E}\to E$ be the normalization and consider the following exact sequence
\begin{equation}\label{eqn_exact_sequence}
0\to H^0(\sO_E^*)\rightarrow H^0(\sO_{\widetilde{E}}^*)\rightarrow H^0(\sO_{\widetilde{E}}^*/\sO_E^*)\rightarrow H^1(\sO_E^*)\to 0,
\end{equation}
the $\Pic(E)$ is determined by the gluing from $H^0(\sO_{\widetilde{E}}^*/\sO_E^*)$. 
From \cite[Definition 1.9]{Friedman}, we have 
$$\sO_{D_i}(-\XX_0)=(I_{D_i}/I_{D_i}^2)\otimes_{\sO_{D_i}}(I_{V_i}/I_{V_i}J_{D_i})$$
and 
$$\sO_{E}(-\XX_0)=(I_{V_0}/I_{V_0}I_{E})\otimes_{\sO_{E}}(I_{V_1}/I_{V_1}I_{E})\otimes_{\sO_{E}}\cdots \otimes_{\sO_{E}}(I_{V_n}/I_{V_n}I_{E})$$
where 
$$
\begin{cases}
I_{D_i}= \text{ideal sheaf of~} D_i \text{~in~} V_i;\\
I_{V_i}= \text{ideal sheaf of~} V_i \text{~in~} \XX_0;\\
J_{D_i}= \text{ideal sheaf of~} D_i \text{~in~} \XX_0.
\end{cases}
$$
From \cite[Definition 1.13]{Friedman}, $\XX_0$ is $d$-semi-stable if $\sO_{E}(\XX_0)=\sO_{E}$, which is equivalent to $T_{\XX_0}^1=\sO_{Q}$.  The locally free sheaf 
$\sO_{E}(-\XX_0)$ is defined by the trivial bundles $\sO_{D_{ij}}$, plus the gluing defined by using 
$$z_iz_jz_k\in H^0(\sO_{D_{ij}}(-V_i-V_j-T))$$
as a local section generator.  

The finite group $G$ acts on the variety $\XX_0$,only on a neighborhood of $D$ and $D^\prime$.  We can modify the gluing along $D_{ij}$ by 
$\lambda\in \Aut^0(D_{ij}^{0})$ which is compatible with the action $G$ such that 
$\sO_{E}(-\XX_0)$  has the gluing data at a triple point 
$t_{ijk}$,
$$
\begin{cases}
z_iz_jz_k\in H^0(\sO_{D_{ik}}(-V_i-V_k-T));\\
z_iz_jz_k\in H^0(\sO_{D_{jk}}(-V_j-V_k-T));\\
\lambda^{-1} z_iz_jz_k\in H^0(\sO_{D_{ij}}(-V_i-V_j-T)).
\end{cases}
$$
At the triple point $t_{ijl}$, the formula is similar.  Now look at the exact sequence (\ref{eqn_exact_sequence}), and we have 
$$\left(\sO^*_{\widetilde{E}}/\sO_E^*\right)_{t_{ijk}}\cong (\cc^*)^3/\cc^*.$$
If $(\cc^*)^3$ has basis $(e_{ij}, e_{jk}, e_{ik})$ and the action $\cc^*$ is the diagonal subspace, then by the gluing, the effect on $\sO_{E}(\XX_0)$
is to multiply the $e_{ij}$ component at $t_{ijk}$ by $\lambda$ and the corresponding component at 
$t_{ijl}$ by $\lambda^{-1}$.  This is exactly the action of 
$G_{ij}$ on $\Pic^0(E)$, up to a power of $2$.  Thus, we have 
$\sO_{E}(\XX_0)=\sO_E$.
\end{proof}

For the $d$-semi-stable  $G$-variety $\XX_0$, let 
$$\pi: \widetilde{\XX}_0\to \XX_0$$
be the normalization.  Let 
$\widetilde{T}\to T$ and $\widetilde{Q}\to Q$ be the corresponding normalizations of the locus $T$ and $Q$. 
Since $\XX_0$ is a variety with normal crossings, \cite[(3.2), (3.3)]{Friedman} implies that there exists an intrinsically defined subsheaf 
$$\Lambda^1_{\XX_0}\subset \pi_* \Omega^1_{\widetilde{\XX}_0}(\log \widetilde{Q})$$
and a resolution
$$0\to \Omega_{\XX_0}^1/\tau_{\XX_0}\rightarrow \Lambda^1_{\XX_0}\rightarrow \pi_* \sO_{\widetilde{Q}}\rightarrow \pi_* \sO_{\widetilde{T}}\to 0$$
where $\widetilde{T}=T$, $\tau_{\XX_0}$ is the torsion point of $\Omega^1_{\XX_0}$.  
Here the sheaf $\Lambda^1_{\XX_0}$ is intrinsic such that 
$\Lambda^2 \Lambda^1_{\XX_0}\cong \omega_{\XX_0}$.  Choose a generating section $\xi\in H^0(T^1_{\XX_0})$, and via Lie bracket, we have the map
\begin{equation}\label{eqn_Lie}
[\cdot, \xi]: T^0_{\XX_0}\to T^1_{\XX_0}.
\end{equation}
We have that 
$$S_{\XX_0}:=\ker([\cdot, \xi])\cong (\Lambda^1_{\XX_0})^*.$$

Now we use the  same proof in \cite[Lemma 2.7]{FM} and consider the $G$-equivariant setting of sheaves.   
We show that there exists smoothing of $\XX_0$ from  \cite[Lemma 2.7]{FM}, and then show in the end that a $G$-equivariant smoothing also exists. 
We first have   
$H^0(\XX_0, \Lambda^1_{\XX_0})=0$.  We have the following results as in \cite[Lemma 2.8]{FM}:
\begin{enumerate}
\item $H^2(T^0_{\XX_0})=0$;
\item The natural map $\T^1_{\XX_0}\to H^0(T_{\XX_0}^1)$ is surjective;
\item  The natural map $H^1(T_{\XX_0}^0)\otimes H^0(T_{\XX_0}^1)\to H^1(T_{\XX_0}^1)$ is surjective. 
\end{enumerate}
The first one is from the following resolution:
$$0\to \Omega_{\XX_0}^1/\tau_{\XX_0}\rightarrow  \pi_* \Omega_{\widetilde{\XX_0}}^1\rightarrow \pi_* \Omega_{\widetilde{Q}}^1\to 0.$$
We have $H^0(V_0, \Omega^1_{V_0})=0$, see \cite[(1.5.3)]{FM},  which implies that 
$H^0(\Omega_{\XX_0}^1/\tau_{\XX_0})=0$.  Serre duality implies that 
$$H^2(T_{\XX_0}^0)\cong H^0(\Omega_{\XX_0}^1/\tau_{\XX_0}\otimes \omega_{\XX_0})^*.$$
By the construction for $\XX_0$, we have 
$$\pi^*\omega_{\XX_0}|_{V_0}=\sO_{V_0}(-D)$$
and 
$$\pi^*\omega_{\XX_0}|_{V_i}=\sO_{V_i}, \quad i>0,$$
therefore,  $H^0(F\otimes \omega_{\XX_0})\subset H^0(F)$ for any torsion free coherent sheaf 
$F$. Thus, we get $H^2(T^0_{\XX_0})=0$ since $H^0(\Omega_{\XX_0}^1/\tau_{\XX_0})=0$.

(2)  comes from the $\Ext$ spectral sequence
$$\T_{\XX_0}^i=\bigoplus_{p+q=i}H^p(\XX_0, \sE xt^q(\Omega^1_{\XX_0}, \sO_{\XX_0})).$$

(3) is from (\ref{eqn_Lie}), since we have
$$0\to S_{\XX_0}\rightarrow T_{\XX_0}^0\stackrel{[\cdot, \xi]}{\rightarrow} T^1_{\XX_0}\to 0.$$
It is enough to show that $H^2(S_{\XX_0})=0$, or equivalently 
$H^0(\Lambda_{\XX_0}^1\otimes \omega_{\XX_0})=0$ which is true since 
$H^0(\Lambda_{\XX_0}^1)=0$.

Now it is ready to prove the theorem.  The proof is the same as 
in \cite[(5.10)]{Friedman}, but with the extra consideration of the $G$-action.  
First let 
$$\T^1_{\XX_0}\otimes \T^1_{\XX_0}\to \T^2_{\XX_0}$$
be the Lie bracket  map.  Since $H^2(T_{\XX_0}^0)=0$,  the Lie bracket $\T^1_{\XX_0}\otimes \T^1_{\XX_0}\to \T^2_{\XX_0}$
induces
$$[\cdot, \cdot]: H^1(T^0_{\XX_0})\otimes H^0(T^1_{\XX_0})\to H^1(T^1_{\XX_0}).$$
As in \cite[(5.10)]{FM}, we let 
$$W_1=H^0(T_{\XX_0}^1)\subset \T^1_{\XX_0},$$
a hyperplane since 
$\T_{\XX_0}^1=H^0(T^1_{\XX_0})\oplus H^1(T_{\XX_0}^0)$ and $H^0(T_{\XX_0}^1)\cong \cc$.  
Let $e\in \T_{\XX_0}^1$ be mapped to $1\in H^0(T_{\XX_0}^1)\cong \cc$, and 
$$W_2=\{v\in  \T_{\XX_0}^1| [v, e]=0\}=\{x+\lambda e| \lambda\in\cc, x\in W_1, [x,e]=0\}.$$
Then $W_1\cap W_2=\{x\in W_1| [x,e]=0\}$ is a hyperplane in $W_2$.  By the basic deformation theory, we have a holomorphic map:
$$f:  \T_{\XX_0}^1\to  \T_{\XX_0}^2$$
such that 
$f(0)=0$,  $f$ has no linear terms, and $f^{-1}(0)$ is the base space of a versal deformation of $\XX_0$. As in \cite[(5.10)]{Friedman}, $f^{-1}(0)$
contains the smooth divisor 
$N_1\subset W_1$ which corresponds to local trivial deformations. Then from \cite[(5.10)]{Friedman}, 
$$f^{-1}(0)=N_1\cup N_2$$
where $N_2=\{h(v)=0\}$ for 
$$h: (\T_{\XX_0}^1,0)\to  (\T_{\XX_0}^2,0)$$
such that $f=g\cdot h$, and 
$\{g=0\}$ is the reduced germ of $N_1$. 
Then $N_1$ corresponds to the local trivial deformations of $\XX_0$, and 
$N_2-N_1$ corresponds to smooth rational surface $(Y, D)$, which is from \cite[(2.5)]{Friedman}. 
Thus, the deformation theory implies that we have 
the surface $\XX_0$ admits a smoothing 
$\pi: \XX\to \Delta$.

To show that there exists a $G$-equivariant smoothing 
$\pi: \XX\to \Delta$,  it is sufficient to show that 
$$H^0(\XX_0, T^1_{\XX_0})^G\neq 0.$$
We have $T^1_{\XX_0}=\sO_{Q}$. 
Our $G$ acts on the type III degeneration $\XX_0$ as in (\ref{eqn_XX_0}). From the action if the negative self-intersection sequence $(d_0,\cdots, d_n)$ is a minimal period, then the $G$ acts on the divisor $D$ and $D^\prime=D_0$ as in (\ref{eqn_action1})
and (\ref{eqn_action2}). 
The divisor $D^\prime=D_0$ and $D_{ij}$ (double curves) are in the singular locus $Q$ of $\XX_0$. 
$G$ is finite and the action in (\ref{eqn_action1})
and (\ref{eqn_action2}) are coordinate wise on the $\pp^1$'s, so sufficient large power of the coordinate function on the $\pp^1$'s will be $G$-invariant.  Hence $H^0(\XX_0, T^1_{\XX_0})^G\neq 0$.

If the negative self-intersection sequence $(d_0,\cdots, d_n)$ is not a minimal period, then the $G$ action, except the above mentioned action on the divisors $D^\prime$, also rotates the minimal period of the negative self-intersection sequence $(d_0,\cdots, d_n)$.  But then the $G$-fixed part $\XX_0$ gives another type III degeneration $\overline{\XX}_0$, together with the $G$-action as the above case.  So $H^0(\XX_0, T^1_{\XX_0})^G\neq 0$. 
\end{proof}

\section{Construction of Type III  canonical degeneration pairs}\label{sec_typeIII_pair}

\subsection{Integral-affine surface}\label{subsec_integral-affine}

We recall the integral-affine surfaces in \cite{GHK15}, \cite[\S 3]{Engel}.

A basis triangle of $\rr^2$ is a triangle $\Delta$ of area $\frac{1}{2}$ with integral vertices in $\zz^2\subset \rr^2$.  Any two pairwise edges of a basis triangle form a basis for $\zz^2$. 

\begin{defn}\label{defn_triangle_integral-affine}(\cite[Definition 3.1]{Engel})
A triangulated integral-affine surface with singularities is a triangulated real surface $S$, possibly with boundary such that 
\begin{enumerate}
\item the complement of the vertices $\{v_i\}\subset S$ of the triangulation admits an atlas of charts into $\rr^2$, whose transition functions take values in 
$SL_2(\zz)\ltimes \zz^2$.
\item the interior of every triangle admits a chart to a basis triangle. 
\end{enumerate}
\end{defn}

An integral-affine surface with singularities has a canonical orientation induced from the standard orientation on $\rr^2$.  Let $e_{ij}$ be the edge $v_i-v_j$ in the triangulation of $S$.  Let $f_{ijk}$ be
the triangle whose counterclockwise ordered vertices are $v_i, v_j, v_k$. In this chart we can write $e_{ij}=v_j-v_i$.

Let $S$ be a triangulated real surface by basis triangles.  The boundary $\partial S=P_1+\cdots+P_n$ is a polygon, where each $P_i$ is integral-affine and is a line segment between two lattice points.
We assume that $\partial S$ is maximal which means the union of two distinct boundary components is never integral-affine equivalent to a single line segment.  

\begin{defn}\label{defn_singular_integral-affine}
If the atlas of integral-affine charts on $S-\{v_i\}$ extends to all vertices $\{v_i\}$, then we say $S$ is non-singular. 
Otherwise $S$ is singular.  Let $S_{\sing}$ denote the singular vertices, i.e., the vertices which the integral-affine structure fails to extend.  
\end{defn}

\begin{rmk}\label{rmk_triple_point}
Let $f_{ijk}$ be a triangle formed by $v_i, v_j, v_k$ in the counterclockwise direction. Let $v_i-v_l$ be another edge such that $v_i, v_k, v_l$ form another triangle $f_{ikl}$ in the counterclockwise direction again.  We define the 
self-intersection number $d_{ik}$ by
$$d_{ik}e_{ik}=e_{ij}+e_{il}.$$
From \cite[Proposition 3.6, Proposition 3.7]{Engel}, $d_{ik}+d_{ki}=2$ for every interior edges $e_{ik}$. 
Also a triangulated integral-affine surface $S$ is uniquely determined by the data of a collection of negative self-intersections $d_{ik}$ for each directed interior edge $e_{ik}$
such that $d_{ik}+d_{ki}=2$.
\end{rmk}

\begin{defn}\label{defn_pseudo-fan_pair}
Let $(Y,D)$ be a Looijenga pair.  The pseudo-fan of $(Y,D)$ is a triangulated integral-affine surface whose underlying surface $S_{(Y,D)}$ is the cone over the dual complex of $D$. 
\end{defn}

Let $e_i$ be the edge from the cone point to the vertex corresponding to $D_i$. Then the negative self-intersection of $e_i$ is:
$$
d_i=
\begin{cases}
-D_i^2, & n>1;\\
2-D_i^2, & n=1.
\end{cases}
$$
Also from \cite[Proposition 3.9]{Engel}, the integral-affine structure on the pseudo-fan of $(Y,D)$ extends to the cone point if and only if $(Y,D)$ is a toric pair. 
Here we recall that a toric Looijenga pair $(Y^{\toric}, D^{\toric})$ is a toric surface $Y^{\toric}$ such that $D^{\toric}$ is its toric boundary.

For a Type III canonical degeneration pair $\XX_0$, the dual complex $\Gamma(\XX_0)$ is a triangulation of the sphere $S^2$. 
The vertices $\{v_i\}$ correspond to the components $V_i$, the directed edges $e_{ij}$ correspond to double curves $D_{ij}$, and triangular faces $f_{ijk}$ correspond to 
triple points in Remark \ref{rmk_triple_point}.

From \cite[Proposition 3.10]{Engel}, the dual complex $\Gamma(\XX_0)$ has a triangulated integral-affine structure such that 
$$
d_{ij}:=
\begin{cases}
-D_{ij}^2, & \ell(D_i)\ge 2;\\
2-D_{ij}^2, & \ell(D_i)=1
\end{cases}
$$
where $d_{ij}$ is the negative self-intersection of $e_{ij}$.   Moreover, the integral-affine structure extends maximally to 
$\Gamma(\XX_0)-\left(\{v_i| Q(V_i, D_i)>0\}\cup\{v_0\}\right)$. It is easy to calculate that $d_{ij}+d_{ji}=2$.

\cite[Proposition 3.7]{Engel} proved that a  triangulated integral-affine surface is uniquely determined by the data of a collection of negative self-intersections $d_{ik}$ for each directed interior edge $e_{ik}$ such that $d_{ik} + d_{ki} = 2$. Therefore, $\Gamma(\XX_0)$ is uniquely determined by the data $\{d_{ij}\}$.  Also the this integral-affine surface $\Gamma(\XX_0)$ has only $A_1$-singularities.

\subsection{Surgeries}\label{subsec_surgeries}

Let us recall the surgeries on the integral-affine surface in \cite[\S 4]{Engel}. 
The surgeries on the integral-affine surface are motivated by the almost toric fibration in \cite{Symington}.  It is a generalization of the moment map from toric surfaces 
to its moment polygon $S$. 

Let $S$ be a singular integral-affine surface which is homeomorphic to a disc, and we let
$$\partial S=P_1+\cdots+P_n$$
is the union of a sequence of segments $P_i$ such that each segment integral-affine equivalent to a straight line segment between two lattice points. 
The boundary components $P_i$ go counterclockwise around $S$ when $i$ increases.  Denote by 
$$v_{i, i+1}=P_i\cap P_{i+1}$$
the vertex, and let $x_i, y_i$ be the primitive integral vectors emanating from $v_{i, i+1}$ along $P_{i+1}$ and $P_i$, respectively.  Then we have 
$y_{i+1}=-x_i$.  As in \cite[Definition 4.2]{Engel}, we define negative self-intersection $d_i$ of $P_i$ by:
$$d_i y_i=y_{i-1}-x_i=y_{i-1}+y_{i+1}.$$
If $\mu: (Y, D, \omega)\to S$ is an almost toric fibration, then  it is a Lagrangian fibration whose general fiber is a smooth $2$-torus, which degenerates under symplectic reduction, over the 
boundary $\partial S$. Also the interior fibers may also degenerate to necklaces of spheres at some finite set of points. 

There are two type of surgeries on $S$.  
\subsubsection{Internal blow-up}\label{subsubsec_internal}
The first one is the internal blow-up of $S$ on the boundary $P_i$.
The surgery is given by:

\textbf{Step I:}  Delete the triangle $T\subset S$ which satisfies the properties:
\begin{enumerate}
\item One edge $e_T$ of $T$ is proper subsegment of $P_i$;
\item $T\setminus e_{T}\subset S-S_{\sing}$ belongs to the interior part of $S-S_{\sing}$;
\item  $T$ is an integer multiple $n$ size of a basis triangle. 
\end{enumerate}

\textbf{Step II:}  Let $v$ be the unique vertex of $T$ lying in the interior of $S$, and let $(e_1, e_2)$ be the oriented lattice basis emanating from $v$ along 
the edges of $T$.  The glue the edge $e_2$ of $S-T$ to the edge along $e_1$ of $S-T$ via the unique affine-linear map which fixes $v$, and maps 
$e_2\mapsto e_1$, and preserving the line containing $P_i$.

The resulting integral-affine surface is an internal blow-up of $S$ on $P_i$.  The singular set is 
$S_{\sing}\cup\{v\}$ and $n$ is the size of the surgery.  Please see \cite[Figure 3]{Engel}. 

\subsubsection{Node-smoothing}\label{subsubsec_node}
The second  one is the node-smoothing of $S$ at the node $P_i\cap P_{i+1}$.
The surgery is:

At the node $P_i\cap P_{i+1}$, for $n\in\mathbb{N}$, cut a segment from $v_{i, i+1}$ to 
$$v:=v_{i,i+1}+n(x_i+y_i)$$
lying in $S-\partial S$.  Then we glue the clockwise edge of the cut to the counterclockwise edge of the cut by the shearing map which points to the line containing the cut and maps $x_i$ to $-y_i$. 

The resulting integral-affine surface is the smoothing of the node at $P_i\cap P_{i+1}$ and has size $n$. The singular set is 
$S_{\sing}\cup\{v\}$.

\subsubsection{Surgeries and self-intersection numbers}\label{subsubsec_node_internal}
Similar to Proposition \ref{prop_internal_corner_blow-up_charge}, an internal blow-up of the integral-affine surface $S$ on the boundary $P_i$ changes the negative self-intersections of the boundary components by:
$$(\cdots, d_i,\cdots)\mapsto (\cdots, d_i+1, \cdots).$$
A node smoothing at $P_i\cap P_{i+1}$ of $S$ changes the 
negative self-intersections of the boundary components by:
$$(\cdots, d_i, d_{i+1},\cdots)\mapsto (\cdots, d_i+d_{i+1}-2, \cdots).$$

Now suppose that we have an integral-affine disc such that the adjacent edges of $\partial S$ meet to form lattice bases, and the negative self-intersections of 
$P_i\subset \partial S$ are:
$$
\begin{cases}
d_i\ge 2, & \text{~for all~} i;\\
d_i\ge 3, & \text{~for some~} i.
\end{cases}
$$
Then from \cite[Proposition 4.6]{Engel}, there is a natural embedding $S\hookrightarrow \hat{S}$ where $\hat{S}$ is an integral-affine sphere and 
$\hat{S}_{\sing}=S_{\sing}\cup\{v_0\}$ for a distinguished point $v_0\in \hat{S}\setminus S$. 

From \cite[Remark 4.8,  Definition 4.7]{Engel}, $v_0\in \hat{S}$ may not be integral.  Since it is rational, we take the order $k$ refinement $S[k]$, and 
$\hat{S}[k]=S[k]\cup C[k]$, where $C:=\hat{S}\setminus S$.  Thus $v_0\in \hat{S}[k]$.

\subsection{The construction}\label{subsec_construction}

Now we are ready to construct a Type III canonical degeneration pair from a Looijenga pair $(Y,D)$, together with a finite group 
$G$-action.  We first have:

\begin{prop}\label{prop_Looijenga_internal_smoothing}
Let $(Y,D)$ be a  Looijenga pair $(Y,D)$ together with a hyperbolic $G$-action.  Then $(Y,D)$ can be represented by a sequence of $G$-equivariant node smoothings and $G$-equivariant internal blow-ups from a toric $G$-pair $(Y^{\toric}, D^{\toric})$. 
\end{prop}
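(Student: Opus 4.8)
The plan is to reduce the statement to the non-equivariant surgery calculus of \cite[\S 4]{Engel} together with the equivariant toric model already furnished by Proposition \ref{prop_corner_internal_blow-up_G}, and then to promote everything to the $G$-equivariant setting. Since $(Y,D)$ is negative definite, its pseudo-fan (Definition \ref{defn_pseudo-fan_pair}) is an integral-affine disc $S$ whose boundary edges $P_i$ carry the negative self-intersections $d_i=-D_i^2$, with $d_i\ge 2$ for all $i$ and $d_i\ge 3$ for some $i$; this is precisely the hypothesis under which $S$ embeds into the integral-affine sphere $\hat S$ of \cite[Proposition 4.6]{Engel}, the dual complex of the associated Type III degeneration. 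First I would apply Proposition \ref{prop_corner_internal_blow-up_G} to produce a toric $G$-pair $(Y^{\toric},D^{\toric})$ related to $(Y,D)$ by $G$-equivariant corner blow-ups and internal blow-ups; by Proposition \ref{prop_toric_model} and \cite[Proposition 3.9]{Engel} its pseudo-fan is a non-singular integral-affine polygon $S^{\toric}$. Note that node-smoothings are defined only on the integral-affine side (\S \ref{subsubsec_node}), so ``represented by'' is to be read through the pseudo-fan.

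Next I would transport the two families of operations to the integral-affine side. An internal blow-up of a Looijenga pair and an internal blow-up of its pseudo-fan have the same effect $d_i\mapsto d_i+1$ on the self-intersection sequence (Proposition \ref{prop_internal_corner_blow-up_charge}(1) and \S \ref{subsubsec_node_internal}), so these match directly. The reduction in the number of cycle components that takes the longer toric cycle $D^{\toric}$ down to $D$ is produced on the integral-affine side by node-smoothings, each of which merges two boundary edges $P_i,P_{i+1}$ into a single edge of self-intersection $d_i+d_{i+1}-2=-(D_i+D_{i+1})^2$ (\S \ref{subsubsec_node}, \S \ref{subsubsec_node_internal}). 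That the pseudo-fan $S$ of $(Y,D)$ can be obtained from the polygon $S^{\toric}$ by internal blow-ups and node-smoothings is the integral-affine reformulation of \cite[Proposition 1.3]{GHK15}, established in \cite[\S 4]{Engel}; combined with the previous step this is exactly the assertion that $(Y,D)$ is represented by such a sequence starting from $(Y^{\toric},D^{\toric})$.

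It remains to perform this sequence $G$-equivariantly. The hyperbolic action equips a neighborhood $V_D$ of $D$, hence the dual complex and the integral-affine disc $S$, with the explicit action of \S \ref{subsec_G_Inoue} described by (\ref{eqn_action1}) and (\ref{eqn_action2}); the same action is carried by $S^{\toric}$ through Construction \ref{eqn_construction1}. As in the proof of Proposition \ref{prop_corner_internal_blow-up_G}, $G$ fixes only the corners of the cycle and acts almost freely on the complement, so the loci of the surgeries come in disjoint $G$-orbits. Performing each internal blow-up and each node-smoothing simultaneously along an entire $G$-orbit --- with a single common size and a compatible shearing along the orbit --- keeps the integral-affine surface $G$-invariant at every stage and makes every surgery $G$-equivariant, recovering $(Y,D)$ with its $G$-action from $(Y^{\toric},D^{\toric})$.

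The main obstacle is precisely this last step, the $G$-equivariance of the node-smoothings. One must guarantee that the surgery data can be chosen constant along each $G$-orbit of nodes, so that the $G$-action descends to the surgered surface and the quotient $\hat S/G$ remains the integral-affine sphere of a Type III degeneration. Here the two cases of Construction \ref{eqn_construction1} should be treated separately: writing $\mathbf d=(d_0,\dots,d_{n-1})$ for the self-intersection sequence of $D$, when $\mathbf d$ is a minimal period the relevant corners are fixed and only the $\overline{M}/M$-part acts, linearly, as in (\ref{eqn_action1}) and (\ref{eqn_action2}), so each node-smoothing need only be equivariant for this linear action; when $\mathbf d$ is not a minimal period the $\UU_M/\langle\sigma\rangle$-part cyclically permutes a full set of corners, and one must smooth them with identical size so that the action survives on the quotient. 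Both follow from the explicit form of the action, but checking that the shearing maps of \S \ref{subsubsec_node} are compatible along each orbit is the delicate point.
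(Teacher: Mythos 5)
Your proposal reaches the right statement and is broadly sound, but it takes a genuinely different route from the paper. The paper's proof never passes through Proposition \ref{prop_corner_internal_blow-up_G} or the integral-affine pseudo-fan: it runs the $G$-minimal model program directly, writing $(Y,D)$ as $G$-equivariant internal and corner blow-ups of a $G$-minimal pair $(Y^{\min},D^{\min})$, and then uses a charge count --- $Q(Y^{\min},D^{\min})\ge 0$, a node smoothing raises the charge by exactly $1$, and toric is equivalent to charge $0$ (Proposition \ref{prop_toric_model}) --- to conclude that $(Y^{\min},D^{\min})$ is a node smoothing of a minimal toric pair. Equivariance of the node smoothings is then a one-line local observation: the hyperbolic action at a node of $D$ is \emph{balanced} (the weights on the two branches $x,y$ of $xy=0$ are inverse to each other), so the smoothing $xy=t$ is $G$-invariant. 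Your route instead starts from the already-established equivariant toric model and transports everything to the surgery calculus of \cite[\S 4]{Engel}, handling equivariance orbit-by-orbit. What the paper's argument buys is that the node smoothings appear for a structural reason (they are exactly what raises the charge of the minimal pair from $0$ to $Q(Y^{\min},D^{\min})$) and the equivariance is checked at a single node rather than across orbits; what your argument buys is a cleaner interface with the integral-affine machinery that is used later in \S \ref{subsec_construction}.

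One step of yours is imprecise and you should repair it. Proposition \ref{prop_corner_internal_blow-up_G} relates $(Y,D)$ to $(Y^{\toric},D^{\toric})$ by corner blow-\emph{ups} followed by internal blow-downs, so recovering $(Y,D)$ from the toric pair requires corner blow-\emph{downs}, and a corner blow-down is not a node smoothing: it sends $(\dots,d_i,1,d_{i+1},\dots)$ to $(\dots,d_i-1,d_{i+1}-1,\dots)$ and preserves the charge, whereas a node smoothing sends $(\dots,d_i,d_{i+1},\dots)$ to $(\dots,d_i+d_{i+1}-2,\dots)$ and raises the charge by $1$. So the component count alone does not let you read the corner blow-downs as node smoothings, and the toric pair of Proposition \ref{prop_corner_internal_blow-up_G} is not literally the starting point of the desired sequence. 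The non-equivariant statement you want is still true (it is \cite[\S 4]{Engel} and \cite[Proposition 1.3]{GHK15}, which you do cite), but to make your argument self-contained you either need the paper's charge bookkeeping on the minimal pair, or you need to commute the corner blow-downs past the internal blow-ups and absorb the node smoothings into the choice of toric starting pair. Your treatment of the equivariance of the surgeries --- constant size along each $G$-orbit, with the two cases according to whether $\mathbf d$ is a minimal period --- is consistent with Construction \ref{eqn_construction1} and is an acceptable substitute for the paper's balanced-weights observation.
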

\begin{proof}
Since the action of $G$ is hyperbolic, the $G$ action is almost free on $Y-D$.  
Then every Looijenga pair $(Y,D)$ can be expressed as a sequence of $G$-equivariant internal blow-ups and 
$G$-equivariant corner blow-ups from a $G$-minimal pair
$$
(Y, D)\stackrel{\alpha}{\rightarrow}  (\widetilde{Y}, \widetilde{D})\stackrel{\beta}{\rightarrow}(Y^{\min},D^{\min})$$
where $\beta$ contain corner blow-ups, and $\alpha$ contain internal blow-ups. 
For the node-smoothing in \S \ref{subsubsec_node},  geometrically it is like the smoothing of the singularity 
$(xy=0)$, and this process  can be made into $G$-equivariant since the $G$-action on the node is balanced, i.e., the actions on $x$ and $y$ are inverse to each other.

The  $G$-minimal pair $(Y^{\min},D^{\min})$ has its charge  $Q(Y^{\min},D^{\min})\ge 0$.  
From \S \ref{subsubsec_node_internal},  a node smoothing will increase the charge by $1$. 
Since the $G$-action  is balanced, the node smoothing can be made into $G$-equivariant. 
So 
every $G$-minimal Looijengal pair $(Y^{\min}, D^{\min})$ is a node
smoothing of a minimal toric  pair (which has charge zero).  
Thus,   $(Y^{\min},D^{\min})$ can be obtained by node smoothings from a toric pair $(Y^{\toric}, D^{\toric})$.
Thus,  as a pair, we know $(Y,D)$ can be given by a sequence of internal blow-ups and node-smoothing  from a toric pair. 
\end{proof}

Now suppose that we have a  Looijenga pair $(Y,D)$, together with a finite group $G$-action such that the $G$-action is hyperbolic on 
$(Y,D)$.    We also require that the cycle $D$ is negative-definite. 

We perform the arguments in \cite[\S 5]{Engel} to construct a Type III canonical degeneration pair $\XX_0=\cup_{i}V_i$, such that there exists a $G$-action on $V_0$. 
First for a Looijenga pair $(Y,D)$ with a $G$-action, from Proposition \ref{prop_Looijenga_internal_smoothing} there exists a sequence of $G$-equivariant internal blow-ups and corner blow-ups to a $G$-minimal pair
$$(Y,D)\to (Y^{\min}, D^{\min}),$$
and then the minimal pair has a toric model
$$ (Y^{\min}, D^{\min})\to (Y^{\toric}, D^{\toric}).$$
Let $S^{\toric}$ be the moment polygon of the toric model  $(Y^{\toric}, D^{\toric})$, then we perform the internal blow-ups and node smoothing as in \S \ref{subsec_surgeries} for 
$S^{\toric}$ to get the integral-affine surface $S$ for the Looijenga pair $(Y,D)$.  
Here the argument is the same as in \cite[\S 4]{Engel}. 
From the argument, there are totally $Q(Y,D)$ surgeries of fixed sizes. 

We can complete the  integral-affine surface $S$  to a sphere $\hat{S}$ as in \cite[Proposition 4.6]{Engel}.  We also take an order $k$-refinement
$\hat{S}[k]$ such that $v_0\in \hat{S}\setminus S$ is integral.  The refinement 
$\hat{S}[k]$ admits a triangulation into basis triangles. 

Note that there may exist many such triangulations and we choose the one that attains the minimal number of edges emanating from $v_0$. 

For each $v_i\in \hat{S}[k]$, if $v_i$ is non-singular, then $\Star(v_i)$ is the pseudo-fan of a toric surface pair $(V_i, D_i)$. 
Suppose that we have a vertex $v_i\in \hat{S}[k]_{\sing}$ which is singular, $v_i\neq v_0$.  Recall such a 
vertex $v_i$ is given by a surgery on $S^{\toric}$. Let 
$v_i^{\toric}\in S^{\toric}$ be the preimage of this vertex under the surgery.  Then we have

An internal blow-up on $S^{\toric}$ corresponds to a node smoothing on  $\Star(v^{\toric}_i)$.

A node smoothing on $S^{\toric}$ corresponds to an internal blow-up on  $\Star(v^{\toric}_i)$.
Please see \cite[Figure 8]{Engel} for the graph. 

Thus there exists a Looijenga pair $(V_i, D_i)$ with pseudo-fan $\Star(v_i)$. 
For $v_0\in \hat{S}[k]$, \cite[Lemma 5.3]{Engel} showed that $\Star(v_0)$ is the pseudo-fan of $(V_0, D^\prime)$.  
From the automorphism group explanation as in \S \ref{subsec_G_Inoue},  the finite group $G$ also acts on the Inoue-Hirzebruch surface 
$V_0$, so that these two dual cusps $D, D^\prime$ are all contractible.  This is exactly what we want for the $G$-action on $V_0$. 

So let 
$$\XX_0:=\bigcup_{v_i\in \hat{S}[k]}(V_i, D_i)$$
where we identify $D_{ij}$ with $D_{ji}$ to make the nodes of $D_i$ are identified with the nodes of $D_j$.  It is routine to check that the triple formula 
holds so that $\XX_0$ is a Type III anti-canonical pair and there exists a $G$-action on the Inoue-Hirzebruch surface $V_0$. 
The $G$ action on other $V_i$'s for $i>0$ depends on the action of $G$ on $V_0$.  We know that the $G$-action on $(V_0, D^\prime)$ is hyperbolic. 
For  the Looijenga pair $(V_i, D_i) (i>0)$ in $\XX_0$,  $D_i=\cup D_{ij}$, the $G$-action on a neighborhood $D_{i0}$ should be compatible with the action of $G$ on the neighborhood of $V_{D_0}=V_{D^\prime}\subset \XX_0$. 
The $G$-action on the whole  $(V_i, D_i)$  for $i>0$ should be compatible with the action above such that the quotient 
$\Gamma(\XX_0)/G$ is also a sphere. 
There are several modifications for the construction of P. Engel in \cite[\S 5.4, \S 5.5, \S 5.6]{Engel} that we do not need to discuss.

%%%%%

\section{The main result}

\subsection{The proof of Theorem \ref{thm_cusp_equivariant_smoothing}}

We prove Theorem  \ref{thm_cusp_equivariant_smoothing}.  Suppose  the surface cusp singularity $(\overline{W}, q^\prime)$ admits a smoothing such that it is 
induced from the  $G$-equivariant smoothing of 
the cusp $(\overline{V}, p^\prime)$.  We let 
$$\pi: \overline{\sV}\to \Delta$$
be the $G$-equivariant smoothing of 
the cusp $(\overline{V}, p^\prime)$.
Recall that in \S \ref{subsec_G_Inoue}, we construct the $G$-Inoue-Hirzebruch surface 
$(\bbV, p, p^\prime)$ and $(V, D, D^\prime)$. We may construct the smoothing $\pi: \overline{\sV}\to \Delta$ as follows: 
first we take the $G$-equivariant smoothing of $(\bbV,p^\prime)$,
$$\pi: \overline{\overline{\sV}}\to \Delta$$
such that each fiber $\overline{\overline{\sV}}_t$ contains a cusp singularity $p_t$ for $t\neq 0$. 
Therefore, we take the $G$-equivariant simultaneous resolution of singularities of  $\{p_t\}$ to obtain 
$$\pi: \overline{\sV}\to \Delta$$
such that the generic fiber $\overline{\sV}_t=(Y_t, D_t)$ is a rational surface $Y_t$, together with an anti-canonical divisor 
$D_t$.  This is a $G$-Looijenga pair.   From the construction in  \S \ref{subsec_G_Inoue} again,  the group 
$G$ acts on $Y_t\setminus D_t$ may has quotient singularities, and  $D_t$ is negative-definite which can be contracted to the cusp singularity $p_t$. 
The quotient $(Y_t\setminus D_t)/G$ is still an Inoue-Hirzebruch surface.  We can take resolution of singularities for 
$Y_t/G$ again such that the quotient is a smooth Looijenga pair $(X, E)$. 

Conversely,   suppose that there is a negative-definite Looijenga pair $(Y,D)$, together with a hyperbolic  finite group $G$-action such that after 
possible resolution of singularities the quotient $(Y, D)/G$ becomes a Looijenga pair $(X,E)$,  we need to show that the dual cusp $p^\prime$ of the cusp $D$ admits a $G$-equivariant smoothing which induces a smoothing 
of the dual quotient cusp $q^\prime$ of $q$ corresponding to $E$. 

From the construction in \S \ref{subsec_construction}, we have  the following  Type III canonical degeneration pairs from a 
$G$-Looijenga pair $(Y,D)$: 
$$\XX_0=\bigcup_{i\in I, i\ge 0}(V_i, D_i)$$
such that $(V_0, D, D^\prime)$ is an Inoue-Hirzebruch surface.  Also there is a $G$-action on the surface $V_0$ such that if 
$(V, D, D^\prime) \to (\bbV_0, p, p^\prime)$ is the contraction of $D, D^\prime$ to $p, p^\prime$, then the $G$-action on $\bbV_0$ only has 
two fixed points $p, p^\prime$. 

Let $\XX\to \Delta$ be the deformation of $\XX_0$. 
From \cite{Shepherd-Barron2}, all the components $\sum_{i\ge 1}(V_i, D_i)$ is contractible,  and we get 
\[
\xymatrix{
\XX\ar[rr]\ar[dr]_{\pi}&&\overline{\XX}\ar[dl]^{\overline{\pi}}\\
&\Delta&
}
\]
such that $\overline{\XX}\to \Delta$ is a deformation of $(\bV_0, D, p^\prime)$
which is a $G$-equivariant smoothing of the cusp $p^\prime$. 

Note that the fiber $(Y_t, D_t)$ of the $\overline{\pi}: \overline{\XX}\to \Delta$ for $t\neq 0$ admits a $G$-action such that the $G$-action is free on 
$Y_t-D_t$, and  $(Y_t, D_t)$ is negative-definite.  Then we simultaneously  contract such $D_t$'s and get 
$$\overline{\overline{\pi}}: \overline{\overline{\XX}}\to \Delta$$
which is a smoothing of $(\bbV_0, p, p^\prime)$. 

Then we take the quotient $\pi: \overline{\overline{\sX}}=\overline{\overline{\XX}}/G\to \Delta$ such that it is a smoothing of $\bbW_0=\bbV_0/G$, and this is another Inoue-Hirzebruch surface
$(\bbW_0, q, q^\prime)$.  We take simultaneously resolution of singularities 
for $q_t\in \overline{\overline{\sX}}_t$ and get 
$$\overline{\sX}\to \Delta$$
which is a smoothing of $(\bW_0, E, q^\prime)$ and the fiber $(X_t, E_t)$ is a Looijenga pair.  
From Proposition \ref{prop_corner_internal_blow-up_G}, the pair  $(X_t, E_t)$ is the quotient of the pair $(Y_t, D_t)$ for $t\neq 0$. 
This gives the smoothing of the cusp $q^\prime$.

\subsection{Example 1}\label{subsec_example1}

We consider an example.  Let  $(\overline{V}, p^\prime)$ be a negative definite cusp singularity whose resolution cycle (in terms of negative self-intersection numbers) is 
$\mathbf{d}^\prime=(5,2)$.  This is a hypersurface cusp given by 
$$\{x^3+y^3+z^5+xyz=0\}.$$
From \cite[Page 308, Example]{Pinkham},  this cusp admits a $G=\zz_2$-action whose quotient is a cusp   
$(\overline{W}, q^\prime)$  whose resolution cycle is given by 
$(2+6)$.   Note that the resolution cycle is a rational nodal curve $E^\prime$ with self-intersection number $-6$. It is a complete intersection singularity, see Example \ref{example_cusp_1} in \S \ref{subsec_G_Inoue}. 

The dual cusp $q$ of $(\overline{W}, q^\prime)$ is given by $\mathbf{d}=(3,2,2,2,2,2)$; and 
the dual cusp  of $(\overline{V}, p^\prime)$ is given by $(4,2,2)$.   Thus, we have the following diagram:
\[
\xymatrix{
(\overline{V}, p^\prime)\ar@{<->}[r]\ar[d]& (Y, D)\ar[d]\\
(\overline{W}, q^\prime)\ar@{<->}[r]& (X,E)
}
\]
where $(Y, D)$ is the hyperbolic Looijenga pair with negative self-intersection sequence $(4,2,2)$, and $(X,E)$ is the Looijenga pair with an anti-canonical divisor 
$E$ whose  negative self-intersection sequence is given by $\mathbf{d}=(3,2,2,2,2,2)$. Looijenga conjecture implies that both  $(\overline{W}, q^\prime)$ and  $(\overline{V}, p^\prime)$ admit smoothings. 

We need to find a $\mu_2$ action on the Looijenga pair $(Y,D)$.  The cusp associated with $(4,2,2)$ is a hypersurface cusp
$$\{x^2+y^4+z^7+xyz=0\}.$$
Let $\mu_2=\langle\eta\rangle$ act on this cusp as:
$$x\mapsto \eta x;\quad y\mapsto \eta y; \quad z\mapsto z.$$
Then we can calculate the invariant cusp which is the hypersurface cusp 
$$\{x^2+y^3+z^{12}+xyz=0\}$$
whose minimal resolution cycle is $(3,2,2,2,2,2)$ from \cite[Lemma 2.5]{Nakamura}.

From \cite[Theorem 1.1]{Looijenga},  since the length of $(4,2,2)$
is $3$, then we can blow down disjoint interior curves intersecting with $D_i$'s in $D$
to get the toric model $(Y^{\toric}, D^{\toric})=(\pp^2, \overline{D})$, where $\overline{D}$ contains three boundary lines with negative self-intersection sequence $(-1,-1,-1)$.

We do the same process by internal blow-downs and the negative self-intersection sequence becomes:
$$(3,2,2,2,2,2)\to  (1,1,1,1,1,1).$$
Here the Looijenga pair $(X,E)$ has Picard number $10+1=11$, see \cite{Nakamura}.  The above blowed down $7$ $(-1)$-curves, therefore, the pair with cycle $(1,1,1,1,1,1)$ has Picard number $4$.  

Then we get the toric model  $(X^{\toric}, E^{\toric})$, where $X^{\toric}$ is the blow-up $Bl_3\pp^2$, and  $E^{\toric}$ is the boundary divisors which is given by $(1,1,1,1,1,1)$.  This is a degree $6$ del Pezzo surface. 
We see that the $\zz_2$ acts on $(\pp^2, \overline{D})$ has the following form 
$$\eta[x:y:z]=[x:\eta y, \eta z].$$
So the fixed point locus are a point and a line.  The Quotient is still a $\pp^2$ with boundary divisors three intersecting lines. 
 Taking blow-ups along these three points we get  $(X^{\toric}, E^{\toric})$. 

\subsection{Example 2}\label{subsec_example2}

All of the cusps in \S \ref{subsec_example1} are lci cusps. We give an example of the group action of cusps which are not lci. 

We consider   $(\overline{W}, q^\prime)$ to  be a negative definite cusp singularity whose resolution cycle is 
$\mathbf{d}^\prime=(5,11, 2)$.  This is not an lci cusp since the dual cusp 
has resolution cycle $E$ whose self-intersection sequence is given by $(2,2,3\underbrace{2,\cdots, 2}_{8},4)$ with length $12$. This is a hypersurface cusp given by 
$$\{x^3+y^{12}+z^6+xyz=0\}.$$  
Already \cite[Proposition 4.8]{FM} showed that $E$ lies in a rational surface $X$ as an anti-canonical divisor. 
One can calculate  the torsion subgroup 
$$H_1(\Sigma, \zz)_{\tor}=\zz_3\oplus\zz_{30}$$ where $\Sigma$ is the link of the cusp $E$, see \cite[Example in \S 3]{Pinkham}. 
Pinkham constructed a 
$G=\zz_3$-cover of $X$ and the cusp $E$ by the result in Theorem \ref{thm_lattice_D_Lambda}. Let us denote the cover by $Y$.
Pinkham construced the cover first on the contraction of $E\mapsto q\in \overline{X}$, and then calculate the $\zz_3$-cover $\overline{Y}$ of $\overline{X}$ which contains another cusp whose resolution cycle $D$ is given  by 
$(4,3,2,3,2,2,2)$. From \cite[Lemma 2.5]{Nakamura}, this cusp is a complete intersection cusp whose local equation is given by 
$$x^2+w^4=yz, \quad  y^3+z^6=xw.$$

Thus, we get the Looijenga pair $(Y,D)$ endowed with a $\zz_3$-action with quotient $(X,E)$.
Thus, we have the following diagram:
\[
\xymatrix{
(\overline{V}, p^\prime)\ar@{<->}[r]\ar[d]& (Y, D)\ar[d]\\
(\overline{W}, q^\prime)\ar@{<->}[r]& (X,E).
}
\]

The dual cusp $(\overline{V},p^\prime)$ of the cusp $D$ has resolution cycle $(2,3,4,6)$, which is also not lci. From our main theorem, $\zz_3$ acts on the cusp $(\overline{V},p^\prime)$ with quotient the cusp $(\overline{W},q^\prime)$
Equivariant Looijenga conjecture implies that   $(\overline{V}, p^\prime)$ admits a $\zz_3$-equivariant smoothing which induces a smoothing  of $(\overline{W},q^\prime)$. 
We can calculate the monodromy matrices of the cusps $(\overline{V},p^\prime)$ and 
 $(\overline{W},q^\prime)$, and they have the same trace. Thus, \cite{Hirzebruch} implies that these two cusps are mutual fiberwise covers of each other.

\section{Equivariant smoothing of cusps by lci cusps}

\subsection{Smoothing by lci cusps}

Let $(V,p^\prime)$ be a cusp singularity with the resolution cycle $D$. 
We call it an $\lci$ cusp if as a singularity it is $\lci$. \cite[Theorem]{Karras} classified all the lci cusp singularities. 

\begin{thm}\label{thm_lci_cusps}(\cite{Karras})
    A cusp singularity $(V,p^\prime)$ is a local complete intersection ($\lci$) cusp if and only if $(V,p)$ is one of the following:
    \begin{enumerate}
        \item $T_{p,q,r}: x^p+y^q+z^r-xyz=0$ with $\frac{1}{p}+\frac{1}{q}+\frac{1}{r}<1$,
        \item $\prod_{p,q,r,s}: x^p+w^r=yz,\quad  y^q+z^s=xw$ with $(\frac{1}{p}+\frac{1}{r})(\frac{1}{q}+\frac{1}{s})<1$,
    \end{enumerate}
    where $p,q,r$ and $p,q,r,s$ are integers greater than $1$ and the point $p^\prime$ is chosen to be the origin. 
\end{thm}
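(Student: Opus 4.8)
\emph{Sufficiency.} First I would verify directly that the listed germs are lci cusps. Each $T_{p,q,r}$ is a single equation in three variables, hence a hypersurface and trivially lci, while each $\prod_{p,q,r,s}$ is cut out by two equations in four variables whose common zero locus has the expected codimension two, hence is a complete intersection. It then remains to check that the stated inequalities are exactly the conditions forcing the minimal resolution of these germs to be a negative-definite cycle of rational curves, rather than a smooth elliptic curve (the simple-elliptic, i.e. boundary, case) or a rational configuration. For $T_{p,q,r}$ the trichotomy $\tfrac1p+\tfrac1q+\tfrac1r \gtrless 1$ separates the rational double points, the simple-elliptic $\widetilde E$ germs, and the cusps; the analogous product inequality plays the same role for $\prod_{p,q,r,s}$. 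This is a direct computation of the resolution from the Newton data of the defining equations.

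\emph{Necessity via embedding dimension.} A cusp is Gorenstein and minimally elliptic, so I would use two structural inputs. On one hand, by Serre's theorem a Gorenstein local ring of embedding codimension at most two is automatically a complete intersection; since a surface germ of embedding dimension $N$ has embedding codimension $N-2$, every cusp with $N\le 4$ is lci. On the other hand, to bound $N$ from above for lci cusps, I would use Laufer's computation that a minimally elliptic singularity of embedding dimension $N\ge 3$ has multiplicity exactly $N$ (equal to $-Z^2$ for the fundamental cycle $Z$). If such a germ is lci, then in its minimal embedding it is a complete intersection of $c=N-2$ equations, each of order $\ge 2$ because the embedding dimension is preserved; the standard lower bound for the multiplicity of a complete intersection by the product of the orders gives $\mathrm{mult}\ge 2^{\,c}=2^{\,N-2}$. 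Combining the two yields $N=\mathrm{mult}\ge 2^{\,N-2}$, which holds only for $N\le 4$ (note $2^{\,N-2}>N$ already at $N=5$, so this single inequality also subsumes the exclusion of higher embedding dimensions). Hence an lci cusp has embedding dimension $3$ or $4$, and conversely these are precisely the lci cusps.

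\emph{Identification of the two families.} It then remains to show that the embedding-dimension-three cusps are exactly the $T_{p,q,r}$ and the embedding-dimension-four cusps exactly the $\prod_{p,q,r,s}$. For $N=3$ one classifies hypersurface cusps; here I would invoke, or reprove via a Newton-polygon and weighted-homogeneous normal-form analysis, the classical fact that the minimally elliptic hypersurface germs are the simple-elliptic ones together with the $T_{p,q,r}$, with the cusp condition selecting $\tfrac1p+\tfrac1q+\tfrac1r<1$. For $N=4$ one classifies codimension-two complete-intersection cusps, where the cyclic structure of the resolution forces the two defining equations into the symmetric shape $x^p+w^r=yz,\ y^q+z^s=xw$, the cusp condition becoming $(\tfrac1p+\tfrac1r)(\tfrac1q+\tfrac1s)<1$; this is the normal form already recorded in \cite[Lemma 2.5]{Nakamura}.

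\emph{Main obstacle.} The numerical reduction to $N\le 4$ is clean and is the easy part. The real work, and the step I expect to be hardest, is the explicit normal-form classification in the last paragraph: showing that an abstract hypersurface (respectively codimension-two) cusp is analytically isomorphic to exactly one $T_{p,q,r}$ (respectively $\prod_{p,q,r,s}$), and verifying that negative-definiteness of the resolution cycle translates precisely into the displayed inequalities. Controlling the cross-terms --- the $xyz$ in the hypersurface case, and the pair $yz,\ xw$ in the complete-intersection case --- that distinguish a cusp from a simple-elliptic germ is the crux of the argument.
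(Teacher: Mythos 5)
The paper does not actually prove this statement: it is imported verbatim from Karras, with the resolution cycles of the two families taken from \cite[Lemma 2.5]{Nakamura}, so there is no in-paper proof to compare against. Judged on its own, your outline follows what is essentially the classical Laufer--Karras route, and the numerical core is sound: the bound $\mathrm{mult}\ge 2^{N-2}$ for a complete intersection cut out by $N-2$ equations of order $\ge 2$ in the minimal embedding, combined with Laufer's $\mathrm{mult}=-Z^2=N$, correctly forces $N\le 4$; and Serre's theorem (Gorenstein of embedding codimension $\le 2$ implies complete intersection) correctly gives the converse. One small imprecision: Laufer's equalities $\mathrm{mult}=-Z^2$ and $\mathrm{embdim}=-Z^2$ hold only for $Z^2\le -2$ and $Z^2\le -3$ respectively; when $Z^2=-1$ or $-2$ the germ is a double point with $\mathrm{embdim}=3\ne\mathrm{mult}=2$. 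This does not damage the argument, since those cases are hypersurfaces and land in family (1) anyway, but the blanket claim ``embedding dimension $N\ge 3$ has multiplicity exactly $N$'' should be qualified.

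The genuine gap is the one you flag yourself: the identification of the embedding-dimension-$3$ cusps with the $T_{p,q,r}$ ($\tfrac1p+\tfrac1q+\tfrac1r<1$) and of the embedding-dimension-$4$ cusps with the $\prod_{p,q,r,s}$ is asserted rather than argued, and that normal-form classification is where all the actual content of Karras's theorem lives --- the reduction to $N\le 4$ is the short, standard part. In particular, for $N=4$ you would need to show that an arbitrary Gorenstein codimension-two cusp germ can be brought to the displayed symmetric pair of equations, which requires either the explicit resolution computation matching the cycle $(d_0,\dots,d_n)$ to the exponents $(p,q,r,s)$ as in Nakamura, or a deformation/Newton-polygon normal-form argument; neither is sketched. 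As a reconstruction of the statement's provenance and of the easy half of its proof your proposal is accurate, but as a self-contained proof it stops exactly where the difficulty begins.
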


The resolution cycles $D$ with negative self-intersection sequence $d=(d_0,\cdots, d_n)$ of the lci cusps are given in \cite[Lemma 2.5]{Nakamura}. Using the main result in the paper, we prove

\begin{thm}\label{thm_smoothing_lci_cusp}
  Let $(\overline{W}^\prime,q^\prime)$ be a cusp singularity.   Suppose that $(\overline{W}^\prime,q^\prime)$ admits a smoothing $f: \sW\to \Delta$.  Then there exists a smoothing $\sV\to \Delta$ of an lci cusp
  $(\overline{V}, p^\prime)$ together endowed with  a finite group $G$ action such that the quotient induces the smoothing $f: \sW\to \Delta$.
\end{thm}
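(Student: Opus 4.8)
The plan is to reduce the statement to the equivariant Looijenga criterion of Theorem~\ref{thm_cusp_equivariant_smoothing} by manufacturing, out of the given smoothing $f$, all the data that its sufficient (``if'') direction requires. First I would run Looijenga's theorem backwards: since $(\overline{W}^\prime,q^\prime)$ is smoothable via $f$, Theorem~\ref{thm_Looijenga_conjecture} guarantees that the resolution cycle $E$ of the dual cusp $q$ of $q^\prime$ is an anti-canonical divisor of a Looijenga pair $(X,E)$, and the chosen smoothing component of $f$ singles out its deformation type through the correspondence between smoothing components and Looijenga pairs of \cite{GHK15}, \cite{Engel}. This produces the target quotient pair appearing in Theorem~\ref{thm_cusp_equivariant_smoothing}.

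\textbf{Constructing the lci cusp and its cover.} Applying the Neumann--Wahl construction \cite{NW} to $(\overline{W}^\prime,q^\prime)$ yields a finite cover $\mu\colon(\overline{V},p^\prime)\to(\overline{W}^\prime,q^\prime)$ with transformation group $G$, unramified over the puncture, for which $(\overline{V},p^\prime)$ is an lci cusp; by Karras' classification (Theorem~\ref{thm_lci_cusps}) it is one of the $T_{p,q,r}$ or $\Pi_{p,q,r,s}$, all isolated complete intersections and hence smoothable by perturbing their defining equations. Thus Theorem~\ref{thm_Looijenga_conjecture}, applied now to $(\overline{V},p^\prime)$, already shows that the resolution cycle $D$ of its dual cusp $p$ lies as an anti-canonical divisor in a Looijenga pair $(Y,D)$. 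The cover $\mu$ extends, across the two dual cusps, to a finite cover of Inoue--Hirzebruch surfaces $(\overline{\overline{V}},p,p^\prime)\to(\overline{\overline{W}},q,q^\prime)$ with group $G$; restricting to tubular neighborhoods of the cycles and invoking the explicit description of the $G$-action in \S\ref{subsec_G_Inoue} (equations~(\ref{eqn_action1}) and~(\ref{eqn_action2})) equips a neighborhood $V_D\subset Y$ of $D$ with precisely the action of Definition~\ref{defn_pair_hyperbolic_type_G}, whose quotient is a neighborhood $V_E$ of $E$.

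\textbf{Globalizing the action and invoking the main theorem.} It then remains to promote this local $G$-action near $D$ to a genuine hyperbolic $G$-action on the whole pair $(Y,D)$ with quotient $(X,E)$. For this I would invoke Pinkham's covering construction (\cite{Pinkham}, as packaged in Theorem~\ref{thm_lattice_D_Lambda}): the cover is encoded by the torsion group $T=M/(\sigma-1)M\cong H_1(\Sigma,\zz)_{\tor}$ of the link $\Sigma$ of $q$, the very datum Neumann--Wahl use to build $\mu$, so the surjection defining $G$ produces a compatible $G$-cover $\overline{Y}\to\overline{X}$ of the cusp-contractions; resolving along the cusp returns $(Y,D)$ together with a $G$-action that is hyperbolic in the sense of Definition~\ref{defn_pair_hyperbolic_type_G}, has only isolated quotient singularities on $Y\setminus D$, and satisfies $(Y,D)/G\cong(X,E)$ after the resolution along $D/G$ prescribed there and in Proposition~\ref{prop_corner_internal_blow-up_G}. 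With $(Y,D)$, its hyperbolic $G$-action, and the quotient $(X,E)$ in hand, the sufficiency part of Theorem~\ref{thm_cusp_equivariant_smoothing} produces the desired $G$-equivariant smoothing $\tilde f\colon\sV\to\Delta$ of $(\overline{V},p^\prime)$ whose quotient smooths $q^\prime$; since $(X,E)$ was chosen from the component of $f$ and smoothings within a component are unique up to the base parameter by the smoothness of the discriminant in \cite{FE}, the induced smoothing is identified with $f$.

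\textbf{Main obstacle.} The hard part is the global step in the third paragraph: extending the cover from the cusp neighborhood to the entire rational surface and certifying that the resulting $G$-action on $Y\setminus D$ has only quotient singularities. Locally the action is dictated by \S\ref{subsec_G_Inoue}, but globalizing it over $X\setminus E$---equivalently, matching the Neumann--Wahl discriminant cover of the cusp with Pinkham's cover of the surface---requires care. If the Neumann--Wahl cover fails to be Galois, one must first pass to the universal abelian cover, which is Galois with group the torsion $T$, and verify that it remains lci, so that $G$ can legitimately be taken as the transformation group entering Theorem~\ref{thm_cusp_equivariant_smoothing}.
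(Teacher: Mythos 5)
Your proposal is correct and follows essentially the same route as the paper: obtain $(X,E)$ from Theorem \ref{thm_Looijenga_conjecture}, produce an lci cover via Neumann--Wahl, globalize to a hyperbolic $G$-action on a pair $(Y,D)$ covering $(X,E)$, and invoke the sufficiency direction of Theorem \ref{thm_cusp_equivariant_smoothing}. The only real difference is bookkeeping across the duality: you apply \cite[Proposition 4.1 (2)]{NW} to $(\overline{W}^\prime,q^\prime)$ itself and take the lci cover of $q^\prime$ as the cusp to be smoothed, whereas the paper applies it to the dual cusp $(\overline{W},q)$, arranges (via an explicit case analysis on the sign of the matrix entry $a$, composing with the mutual fiberwise cover of \cite{Hirzebruch} when necessary) that the cover of $q$ has resolution cycle $(t)$ with $t=\tr(\sigma)$, and then declares the lci cusp of the conclusion to be the dual $(3,\underbrace{2,\dots,2}_{t-3})$ of that cover. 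Since dual cusps have conjugate monodromies and hence equal traces, the two recipes land on the same hypersurface cusp, but your black-box use of Neumann--Wahl hides the orientation issue (the cover can come out as the cycle $(t)$, which is not lci for $t>6$) that the paper's two cases handle explicitly. For the globalization step you correctly single out as the main obstacle, the paper's mechanism is exactly the one you sketch: it glues the singularity germ of the cover onto the $G$-cover of $X\setminus E$ via van Kampen and resolves, so that $(Y,D)/G$ recovers $(X,E)$ by construction; and, like you, the paper acknowledges but does not fully resolve the possibility that the Neumann--Wahl cover fails to be Galois.
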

\begin{proof}
    Let $(\overline{W}^\prime,q^\prime)$ be a cusp whose resolution cycle $E^\prime$ is given by 
    $(d^\prime_0,\cdots, d_k^\prime)$. Since  $(\overline{W}^\prime,q^\prime)$ admits a smoothing $f: \sW\to \Delta$, from Looijenga Conjecture/Theorem there is a Looijenga pair $(X,E)$ such that $E$ is an anti-canonical divisor with minimal resolution cycle $(d_0,\cdots, d_n)$ which is the dual of $(d^\prime_0,\cdots, d_k^\prime)$.
    If the length $n+1\le 4$, then the cusp $(\overline{W}^\prime,q^\prime)$ must be an $\lci$ cusp, and the smoothing $f: \sW\to \Delta$ is an lci smoothing.

    We suppose that $n+1>4$ so that $(\overline{W}^\prime,q^\prime)$ is not $\lci$. 
    The monodromy matrix of the cusp $E$ is given by 
    $$\sigma=
\mat{cc}
0&-1\\
1&d_n
\rix\cdots \mat{cc}
0&-1\\
1&d_0
\rix
=
\mat{cc}
a&b\\
c&d
\rix.
$$
We let $E$ contracts to get a cusp $(\overline{W},q)$ which is dual to $(\overline{W}^\prime, q^\prime)$. 
Neumann and Wahl, in \cite[Proposition 4.1 (2)]{NW},  constructed a finite  cover 
$(\overline{V}, p)$ of $\overline{W}$ with transformation group $G$ so that $(\overline{V}, p)$ is  a hypersurface cusp, which is an $\lci$ cusp. 
The construction is as follows. 
Let $H$ be the subspace of $\zz^2$ generated by $\mat{c}a\\ c\rix$ and $\mat{c}0\\ 1\rix$.   We can assume $a\neq 0$, otherwise we just take $H=\zz^2$.   Then the  matrix $\sigma$ takes the subspace $H$ to itself
by the matrix  $\mat{cc}0&-1\\ 1& t\rix$ where $t=\tr(\sigma)=a+d$.
The finite transformation group $G$ is  given as follows:   first we take  the quotient finite group $N(H\rtimes \zz)/H\rtimes \zz$, where $N(H\rtimes \zz)$ is the normalizer.  Then  the subgroup 
$H\rtimes \zz\subset \pi_1(\Sigma)$ determines a cover  of $\overline{V}$. 
This cover determined by $H\rtimes \zz$ is either the cusp with resolution graph consisting of a cycle with one vertex weighted $-t$ or the dual cusp of this, according as the above basis is oriented correctly or not, i.e., whether $a< 0$ or  $a > 0$. We list these two cases:
\begin{enumerate}
    \item If $a<0$, then we just use the cusp $(\overline{V},p)$ whose resolution cycle is $(t)$. There is a finite cover $(\overline{V},p)\to (\overline{W},q)$.  
    \item If $a>0$, the cover $(\overline{V},p)$ is the dual cusp of $(t)$, hence must be a hypersurface cusp:
$$(3, \underbrace{2,2,\cdots, 2}_{t-3})$$
with local equation given by 
$$\{x^2+y^{3}+z^{t-3+7}+xyz=0\}.$$
From \cite[Theorem]{Pinkham}, the torsion subgroup of $H_1(\widetilde{\Sigma},\zz)$ is 
$$B=\{(\lambda,\mu,\nu)\in\cc| \lambda^2=\mu^3=\nu^{t+4}=\lambda \mu\nu\}$$
and this group $B$ acts on the above cusp gives the dual cusp $(t)$. 

Also \cite{Hirzebruch} implies that if the cusp $(\overline{V},p)$ and the cusp $(t)$ are dual to each other, then the cusp $(t)$ is also a finite cover over the cusp  $(\overline{V},p)$. 
We have finite covers $(t)\to (\overline{V},p)\to (\overline{W},q)$.
Thus, there is still a finite cover of the cusp $(t)$ to $(\overline{W},q)$. 
We still denote the cusp $(t)$ as $(\overline{V},p)$.
\end{enumerate}

In summary, we get a finite cover $(\overline{V},p)\to (\overline{W},q)$ with transformation group $G$ such that 
$(\overline{V},p)$ has resolution cycle $(t)$. 
Now we argue that $(\overline{V},p)$ can be fitted into a Looijenga pair $(Y,D)$ with minimal resolution cycle  $(t)$. 
We work on analytic spaces for the cusp $(\overline{V},p)$, and the Looijenga pair $(X,E)$.  The contraction of $E$ gives the cusp $(\overline{W}, q)$, and $G$ is a subgroup of the fundamental group of the link of the cusp$(\overline{W}, q)$ since there is a $G$-cover $(\overline{V},p)\to (\overline{W},q)$.  

Let $U_E$ be a neighborhood of $E$ in $X$.   Then the contraction of $E$ to the cusp $q$ gives a neighborhood  $\overline{U}_q\subset \overline{W}$ of $q$.  Let $\Sigma=\partial U_E$ is the boundary, which is also the boundary of $\overline{U}_q$. Then $\Sigma$ is the link of the cusp $(\overline{W}, q)$. 
Shrink $U_E$ to a tubular neighborhood, then $\overline{U}_q\setminus \{q\}$ is homotopy equivalent to the link.  Thus, we have $X=(X\setminus E)\cup V_E$. 
Let $\overline{X}=(X\setminus E)\cup \overline{U}_q$. 
From Van Kampen theorem  we can glue the local singularity germ  $(\overline{V},p)$ (a neighborhood) to the suitable $G$-cover of $X\setminus E\cong \overline{X}\setminus \{q\}$ and then take resolution to get $(Y, D)$. 
Then the group $G$ acts on $(Y, D)$ which is hyperbolic, and from the quotient $(Y,D)/G$, after performing  suitable resolution of singularities we get $(X,E)$.

Therefore, from main Theorem \ref{thm_cusp_equivariant_smoothing}, and also the dual cusp of $D$ is a hypersurface cusp $(\overline{V}^\prime, p^\prime)$ whose equivariant smoothing induces a smoothing of the cusp $(\overline{W}^\prime, q^\prime)$.
\end{proof}

\subsection{Example 3}\label{subsec_example3}

Here is another example.  Let  $(\overline{W}^\prime, q^\prime)$ be a negative definite cusp singularity whose resolution cycle (in terms of negative self-intersection numbers)  is 
$(9,6)$, which is not a complete intersection cusp since 
$(6-2)+(9-2)=11>4$.  Let $E^\prime$ be the resolution cycle.  The charge $Q(E^\prime)=12+(6-3)+(9-3)=21$. Then from Proposition \ref{prop_rational_dual},  this cusp 
$E^\prime$ has a rational dual $E$ with negative self-intersection sequence $(3,2,2,2,3,2,2,2,2,2,2)$. 

This cusp $(3,2,2,2,3,2,2,2,2,2,2)$ is a hypersurface cusp. From \cite[Lemma 2.5]{Nakamura}, the local  equation is given by 
$$x^2+y^8+z^{11}-xyz=0.$$
Also let $\Sigma$ be its link, then we have 
$$H_1(\Sigma,\zz)_{\tor}=G=\{\lambda, \mu,\nu\in\cc|\lambda^2=\mu^8=\nu^{11}=\lambda \mu\nu\}.$$
Its dual cusp has resolution cycle $(1, 7, 10)$ and minimal resolution cycle $(9,6)$. 
\cite[Theorem]{Pinkham} implies that the $G$ action on the cusp $\mathbf{d}=(3,2,2,2,3,2,2,2,2,2,2)$ (which we denote it by $(\overline{V}^\prime, p^\prime)$) is given by 
$$x\mapsto \lambda x, \quad y\mapsto \mu y, \quad z\mapsto \nu z.$$
The quotient $(\overline{V}^\prime, p^\prime)/G$ is the cusp $(\overline{W}^\prime, q^\prime)$. 
Let us form the following diagram:
\[
\xymatrix{
(\overline{V}^\prime, p^\prime)\ar@{<->}[r]\ar[d]& (Y, D)\ar[d]\\
(\overline{W}^\prime, q^\prime)\ar@{<->}[r]& (X,E)
}
\]
where we use the  Looijenga conjecture (which is now a theorem).  $(\overline{V}^\prime, p^\prime)$ is a hypersurface cusp which admits a smoothing, hence there is a Looijenga pair $(Y,D)$ so that $D$ has minimal resolution cycle $(9,6)$ which is the dual of $(3,2,2,2,3,2,2,2,2,2,2)$. 

We also know the cycle $(3,2,2,2,3,2,2,2,2,2,2)$ is an anti-canonical divisor of a smooth rational surface $(X,E)$. 
Here is another way to see this.  
Blow-up $(\pp^1\times \pp^1, E=4 \text{~lines})$ along $4$ corners first and we get 
$(Bl_4\pp^1\times\pp^1)$ with $E$ becomes $(2,1,2,1,2,1,2,1)$.  Then we do blow-up along another three corners, and interior blow-ups along the components with self-intersection number $(-1)$, and blow down one interior $(-1)$-curve along one component 
with with self-intersection number $(-3)$. Finally we get $(X,E)$. Thus, from Looijenga conjecture again the cusp $(\overline{W}^\prime, q^\prime)$ is smoothable. 

There should have a $G$-action on $(Y,D)$ such that its quotient $(Y,D)/G$, after suitable resolution of singularities, we get $(X,E)$. Since the length of $D$ is $2$, from \cite[Theorem 1.1]{Looijenga}, after blowing down the disjoint $(-1)$-curves we get the toric model $(\pp^1\times \pp^1, \overline{D})$, where $\overline{D}$ contains two components $\overline{D}_0, \overline{D}_1$ with bidegree $(1,1)$.  
We can perform toric blow-ups to get the toric model $(Y^{\toric}, D^{\toric})$ such that 
$D^{\toric}$ contains $8$ $\pp^1$ components with negative self-intersection sequence $(2,1,2,1,2,1,2,1)$.  

On the other hand, starting from the cycle $E$, since $Q(X,E)=12-9=3$ we can perform internal blow-downs  and corner-blow-downs to get 
$$
(3,2,2,2,3,2,2,2,2,2,2)\to (3,1,2,2,3,1,2,2,2,1,2)\to (2,1,2,2,1,2,1,1).
$$
Thus, we have the toric model $(X^{\toric}, E^{\toric})$ with cycle $(2,1,2,2,1,2,1,1)$
since its charge is zero. We can perturb some components to get $(2,1,2,1,2,1,2,1)$.

\subsection{Example 4}\label{subsec_example4}
We still look at the cusp $(\overline{W}^\prime, q^\prime)$ as in Example in \S \ref{subsec_example3}. 
Its dual cusp $(\overline{W},q)$ has resolution cycle  $E$ with negative self-intersection sequence $(3,2,2,2,3,2,2,2,2,2,2)$. 
We have its monodromy matrix 
$$\sigma^\prime=\mat{cc}
0&-1\\
1&2
\rix\cdot\mat{cc}
0&-1\\
1&2
\rix\cdots  \mat{cc}
0&-1\\
1&3
\rix
=
\mat{cc}
-34&-75\\
39&86
\rix.$$

From \cite[Proof of Theorem 4.1]{NW},  $t=86-34=52$, there exists a  finite cover 
$(\overline{V}, p)\to (\overline{W}, q)$ such that $(\overline{V}, p)$ is a cusp with 
resolution cycle $(52)$. 
The dual cusp  $(\overline{V}^\prime, p^\prime)$ to the cusp  $(\overline{V}, p)$
is a 
hypersurface cusp whose resolution cycle is given by 
$(3, \underbrace{2, 2, \cdots, 2}_{49})$. 
From \cite[Lemma 2.5]{Nakamura}, this hypersurface  cusp  is given by 
$$\{x^2+y^3+z^{56}+xyz=0\}.$$
Let $G$ be the transformation group of the cover $(\overline{V}, p)\to (\overline{W}, q)$.  First we have the exact sequence
$$0\to H\rtimes \zz\to \pi_1(\Sigma)\to G^\prime\to 0$$
where $H\subset \pi_1(\Sigma)=\zz^2\rtimes \zz$ is generated by 
$(0,1), (-34,39)$. 
Let $G^\prime=N(H)/H$, where $N(H)$ is the normalizer. 
Then 
$$G=G^\prime\rtimes H_1(\Sigma,\zz)_{\tor}.$$ 

Note that the cusp singularity $(\overline{V}, p)$ has monodromy 
$$\sigma=\mat{cc}
0&-1\\
1& 52
\rix.$$
This matrix has the same trace with $\sigma^\prime$. Therefore, from \cite{Hirzebruch}, these two cusps are mutual fiberwise covering spaces.
Here we use the cover $(\overline{V}, p)\to (\overline{W}, q)$.

Let $D$ be the resolution cycle representing $(52)$, and $E$ be the resolution cycle representing $(3,2,2,2,3,2,2,2,2,2,2)$.  Then these two cycles lie in a Looijenga pairs 
$(Y,D)$ and $(X,E)$. 
Thus, we have the following diagram:
\[
\xymatrix{
(\overline{V}^\prime, p^\prime)\ar@{<->}[r]\ar[d]& (Y, D)\ar[d]\\
(\overline{W}^\prime, q^\prime)\ar@{<->}[r]& (X,E).
}
\] 

From \cite[Figure 12]{Engel},  the Type III anti-canonical pairs of $(9,6)$ and $(3,2,2,2,3,2,2,2,2,2,2)$ are given.  
Similar way we can get the Type III anti-canonical pairs of $(3, \underbrace{2, 2, \cdots, 2}_{49})$ and  $(52)$, whose quotient under $G$ gives the above Type III anti-canonical pairs, up to 
resolution of singularities. We check that the quotient $(Y,D)/G$, after corner blow-ups and blow-downs, internal blow-ups and blow-downs we can get $(X,E)$. From \cite[Theorem 1.1]{Looijenga}, the pair $(Y,D)$, after blowing down disjoint exceptional curves, becomes $(\pp^2, \overline{D})$, where $\overline{D}$ is a cubic curve with a node. We perform $10$ times corner blow ups and get the cycle $(2,2,2,2,2,2,2,2,2,1,3)$.  Then we do two internal blow-ups again to get $(3,2,2,2,3,2,2,2,2,2,2)$.

%%%%%%%%

%%%----------------------------------------------------------------------
%%%----------------------------------------------------------------------

\subsection*{}

% ------------------------------------------------------------------------
\end{document}